\numberwithin{equation}{section}
\theoremstyle{plain}
\newtheorem{theorem}{Theorem}[section]
\newtheorem{lemma}{Lemma}[section]
\newtheorem{proposition}{Proposition}[section]
\theoremstyle{remark}
\newtheorem{remark}{Remark}[section]
\newtheorem{example}{Example}[section]
\newcommand{\n}{\mathbb{N}}
\newcommand {\<}{\left\langle}  
\renewcommand {\>}{\right\rangle}  
\newcommand {\norma}[1]{\left\|#1\right\|}
\newcommand {\lnorma}[2]{\left\|#2\right\|_{\mathcal{L}^2(\mathbb{P}_{#1})}}
\newcommand {\Lnorma}[3]{\left\|#3\right\|_{\mathcal{L}^2_{#1}(\lambda\,\otimes\, \mathbb{P}_{#2})}}
\newcommand{\wi}{\widehat{\mathcal{I}}}
\renewcommand{\i}{\mathcal{I}}
\newcommand{\I}[3]{\mathcal{I}^{(#1)}_{g,\,#2,\,#3}}
\renewcommand{\S}[3]{\mathcal{S}^{(#1)}_{g,\,#2,\,#3}}
\newcommand{\wH}{\widehat{\mathcal{H}}_{n,N}}
\renewcommand{\H}{\mathcal{H}_{n,N}}
\newcommand{\s}[2]{t_{#2}(#1)}
\newcommand{\hs}[2]{\hat{t}_{#2}(#1)}
\newcommand{\ew}{\mathbb{E}}
\newcommand{\pr}{\mathbb{P}}
\newcommand{\lip}{\operatorname{Lip}}
\newcommand{\cl}{\operatorname{cl}}
\journal{Journal of Differential Equations}
\begin{document}
\begin{frontmatter}

\title{Law of the Iterated Logarithm for Markov Semigroups with Exponential Mixing in the Wasserstein Distance}

\author[us]{Dawid Czapla\corref{cor}}
\ead{dawid.czapla@us.edu.pl}
\cortext[cor]{Corresponding author}

\author[lu]{Sander C. Hille}
\ead{shille@math.leidenuniv.nl}

\author[us]{Katarzyna Horbacz}
\ead{katarzyna.horbacz@us.edu.pl}

\author[us]{Hanna Wojew\'odka-\'Sci\k{a}\.zko}
\ead{hanna.wojewodka@us.edu.pl}

\affiliation[us]{organization={Institute of Mathematics, University of Silesia in Katowice},
           addressline={Bankowa 14},
           city={Katowice},
           postcode={40-007},
           country={Poland}}

\affiliation[lu]{organization={Mathematical Institute, Leiden University},
            addressline={Einsteinweg 55}, 
            city={Leiden},
            postcode={2333 CC}, 
            country={the Netherlands}}
            
\begin{abstract}
In this paper, we establish the law of the iterated logarithm for a wide class of non-stationary, continuous-time Markov processes evolving on Polish spaces. Specifically, our result applies to certain additive functionals of processes governed by stochastically continuous Markov-Feller semigroups that exhibit exponential mixing and non-expansiveness in the Wasserstein distance, provided that a suitable moment condition involving the initial distribution is satisfied. Furthermore, we outline the application of this result to a Markov process arising as the solution of an infinite-dimensional stochastic differential equation with dissipative drift and additive noise.
\end{abstract}

\begin{keyword}
Law of the iterated logarithm \sep Markov continuous-time process \sep exponential mixing \sep Wasserstein distance \sep Itô stochastic differential equation with dissipative drift and additive noise.

\MSC[2020] 60J25 \sep 60F15 \sep 60H10 \sep 37A30

\end{keyword}

\end{frontmatter}
\section{Introduction}

Typically, the main objective in studying Markov processes is to understand their asymptotic behavior. One may be interested, for instance, in ergodicity, the rate at which the law of a~process converges to its stationary distribution, or the validity of classical limit theorems.
In this paper, we focus on the latter, proposing a tractable criterion ensuring the law of the iterated logarithm (LIL) for some additive functionals of Markov processes, formulated within the framework of transition semigroups acting on measures -- a setting that appears particularly well-suited for describing the evolution of process distributions.

It should be emphasized that the main result of this article (Theorem \ref{thm:main}) pertains to processes with a general, not necessarily locally compact state space -- specifically, a~Polish metric space, further denoted by $(X, \rho)$. As is well-known, the lack of local compactness rules out many classical techniques for analyzing the asymptotic behavior of Markov processes, such as those developed by Meyn and Tweedie (see, e.g.,~\cite{meyn_tweedie, mt93, mt}) or their various adaptations (see e.g.,~\hbox{\cite{benaim1, costa}}). These methods typically rely on Harris recurrence (often ensured via H\"ormander-type bracket conditions; cf. \cite{benaim1}), aperiodicity, and on certain criteria involving drift conditions towards a~petite set (see \hbox{\cite[Theorem 17.5.3]{mt93}}). Consequently, they are essentially restricted to $\psi$-irreducible processes. In non-locally compact settings, however, the $\psi$-irreducibility usually requires rather highly restrictive assumptions. For example, it is sometimes achieved (or effectively substituted) by combining open set (topological) irreducibility together with the strong Feller property (which is enabled by \hbox{\cite[Proposition 6.1.5]{mt93}}; see, e.g., \cite{b:priola1} or \hbox{\cite[Theorem 1.2]{b:hairer_sf}}, where the latter is proved in \cite{b:hairer_sf2}) or with equivalence of transition probabilities at a fixed time (see, e.g.,~\cite{b:priola2}).
For a~somewhat more detailed discussion of these issues, see, e.g., the introduction of \cite{pierwsza_praca} and the abstract of~\cite{hairer_mattingly}.

Before delving into the motivation behind our main result, let us first give a short overview of the LIL itself. 
First of all, the LIL can be seen as a~refinement of the strong law of large numbers, sharpening the convergence rate from $\mathcal{O}(t)$ to $\mathcal{O}(\ln\ln t)$. Furthermore, it may also be regarded as a complement to the central limit theorem (CLT), offering a~pathwise counterpart to its distributional statement. While the CLT captures the average fluctuations around the mean by establishing convergence in distribution to a normal law, the LIL provides a~more precise almost sure description of the extremal behavior, identifying the exact envelope within which the normalized trajectories oscillate. More specifically, it determines the almost sure lower and upper limits of appropriately scaled sample paths of a~stochastic process. To put it formally, a~real-valued stochastic process $\{X_t\}_{t \geq 0}$ satisfies the LIL whenever
\begin{equation}\label{lil-intro}
\liminf\limits_{t \to \infty}\frac{X_t}{\sqrt{2\sigma^2\,t\ln\ln t}} = -1 \quad \text{and} \quad \limsup\limits_{t \to \infty}\frac{X_t}{\sqrt{2\sigma^2\,t\ln\ln t}} = 1 \quad \text{a.s.,}
\end{equation}
where $\sigma>0$ can be viewed as the asymptotic variance of this process.

Originally formulated for independent and identically distributed random variables, both the CLT and the LIL were later extended to martingales (see, e.g., \cite{hall_heyde,b:heyde_scott, mt93}). These extensions laid the groundwork for the development of various adaptations of these limit theorems to the setting of Markov processes. Initial generalizations focused exclusively on stationary processes. Within the realm of continuous-time dynamics, one may refer, for instance, to the CLT and LIL for stationary ergodic Markov processes established in \cite{b:bhattacharya}, or to the CLT for stationary Markov processes with normal generators, proved in \cite{holzmann05}, which extends earlier results of \cite{gordin_lifsic81} (see also \cite{klo}).

Over time, a few versions of limit theorems for general, non-stationary, continuous-time Markov processes on Polish spaces have been developed. The most general result of this kind to date appears to be \cite[Theorem 2.1]{b:komorowski}, which establishes the CLT for Markov-Feller continuous-time processes under the so-called exponential mixing in the Wasserstein distance~$d_W$. Specifically, letting $\{P_t\}_{t\geq 0}$ denote the Markov semigroup under consideration (acting on probability measures), and assuming that each $P_t$ preserves the finiteness of measures' first moments, this property can be defined as follows: there exist constants $c,\gamma>0$ such that, for all $t\geq 0$ and any two probability measures $\nu_1,\nu_2$ with finite first moments, we have
\begin{equation}\label{e:war_kom}
d_W\left(\nu_1 P_t, \nu_2 P_t\right)\leq ce^{-\gamma t} d_W(\nu_1,\nu_2).
\end{equation}
Naturally, in addition to this mixing property, certain assumptions guaranteeing the uniform integrability of $\rho(x_0,\cdot)^{2+\delta}$ with respect to $\mu P_t$, $t\geq 0$ -- where $x_0\in X$, $\delta>0$, and $\mu$ is the initial distribution of an associated Markov process -- are required too. A discrete-time counterpart of \cite[Theorem 2.1]{b:komorowski} is given in \cite[Theorem 5.1]{gulgowski}. What is both noteworthy and somewhat surprising about the latter is that order-$2$ uniform integrability is sufficient instead of the aforementioned, more typical `order-strictly-higher-than-2' condition, used in~\cite{b:komorowski} and also employed in the present paper (see Assumption \ref{cnd:a4}).

With regard to the LIL, a criterion analogous in spirit to \cite[Theorem 2.1]{b:komorowski}, but formulated for discrete-time Markov chains, was established in \cite{b:bolt_majewski}. To the best of our knowledge, however, no such result addressing the LIL in the continuous-time setting has been obtained so far. This gap in the literature actually constitutes the primary motivation for the present work. Besides \cite{b:komorowski} and \cite{b:bolt_majewski}, our work is also inspired by LIL-type results for some particular classes of Markov processes, including, e.g.,\cite{b:komorowski_LIL}.

In our framework we are nevertheless compelled to adopt a slightly stronger version of condition \eqref{e:war_kom} by requiring $c = 1$, albeit in a somewhat weaker formulation regarding its domain of validity, here restricted to Dirac measures. Specifically, we assume that there exists $\gamma > 0$ such that for every $t \geq 0$ and any $x, y \in X$,
\begin{equation}\label{exp_mixing}
d_W\left(\delta_x P_t, \delta_y P_t\right) \leq e^{-\gamma t} \rho(x, y).
\end{equation}
Although we are aware of the limitations this reinforcement imposes on the applicability of our result, all indications suggest that it is unavoidable within our approach to proving Theorem~\ref{thm:main}. In short, the necessity of assuming $c=1$ arises from the specific structure of the proof of Lemma~\ref{lem:5}, which forms a crucial step in the overall argument leading to the main result (a~bit more detailed justification is provided in Section~\ref{sec:2}). 

At the same time, it should be stressed that, despite this stronger form of exponential mixing, our LIL criterion remains verifiable in a number of significant models. Notably, it \hbox{applies} to the solution of an Itô stochastic differential equation (SDE) with dissipative drift and additive noise, studied in \cite[§6.1]{b:komorowski} in connection with the CLT. For the convenience of the reader, this example is briefly revisited at the end of the paper. Moreover, hypothesis~\eqref{exp_mixing} is also met in other relevant models, such as those discussed in \hbox{\cite[§6.1]{b:komorowski}} and \hbox{\cite[§4.1, §5.2]{b:cloez_hairer}}, thereby opening the way to establishing the LIL in these settings. Consequently, our result appears to at least partially bridge a previously unexplored gap in the general theory of limit theorems for continuous-time Markov processes on Polish state spaces.

Finally, let us pay some attention to the proof of the main result. The overall strategy proceeds in three main steps: reducing the problem to the discrete-time setting, applying a martingale-based decomposition to the Markov process under consideration, and establishing the LIL for the resulting martingale along the integers. To accomplish the latter, we employ the Heyde--Scott criterion (\cite[Theorem 1]{b:heyde_scott}), in a manner similar to \cite{b:bolt_majewski}. In the case where the process starts from its stationary distribution, the core conditions of this criterion (see \eqref{cnd:b1} and \eqref{cnd:b2}) can be derived relatively easily by applying the Birkhoff ergodic theorem (cf. Lemma~\ref{lem:4}). The main difficulty lies in verifying that they remain valid in the non-stationary setting. The cornerstone of our approach in this regard is the above-mentioned Lemma~\ref{lem:5}, which constitutes the most technically demanding part of the proof and, arguably, the principal contribution of the paper. More precisely, Lemma~\ref{lem:5} establishes the continuity of certain functions of the form $x \mapsto \mathbb{E}_x[Y]$, where $Y$ is a random variable depending on $\limsup$ or $\liminf$ of the squared martingale increments. The key idea behind this technique originates from \cite{b:bolt_majewski} (see also \cite{b:czapla-strassen}), where it was developed in the context of martingale increments associated with a~discrete-time Markov chain. Lemma~\ref{lem:5} essentially demonstrates that this method remains applicable -- though under a~more restrictive form of exponential mixing condition -- when the martingale increments arises from a~continuous time process.

The paper is organized as follows. Section \ref{sec:1} collects all the necessary definitions and notation, including basic concepts from the theory of Markov processes and their transition semigroups, as well as the formulas for the distances in the space of probability measures used throughout the paper. Section~\ref{sec:2} presents the formulation of the main theorem \hbox{(Theorem~\ref{thm:main})}, preceded by an introduction and discussion of the assumptions, and by a version of the exponential ergodicity result from \cite{b:czapla-CLT-cont}, adapted to the present setting \hbox{(Proposition \ref{prop:1})}. 
Sections~\ref{sec:3} and~\ref{sec:4} are devoted to the proof of Theorem \ref{thm:main}. Section \ref{sec:3} outlines the general strategy, while Section~\ref{sec:4} contains the detailed proof of Lemma \ref{lem:5}, together with all the necessary auxiliary results. Section~\ref{sec:5} discusses representations of the asymptotic variance involved in the theorem. Finally, Section~\ref{sec:6} briefly illustrates an application of our main result to the above-mentioned SDE with dissipative drift and additive noise. 

\section{Preliminaries} \label{sec:1}
Throughout the paper, we shall work on a complete separable metric space $(X,\rho)$, endowed with its Borel $\sigma$-field $\mathcal{B}(X)$. As usual, the symbol $B(x,r)$ will represent the open ball centered at $x$ with radius~\hbox{$r>0$} in this space.

By $BM(X)$ we will denote the space of all real-valued, Borel measurable, bounded functions on $X$, equipped with the supremum norm $\norma{\cdot}_{\infty}$, and we will write $BC(X)$ to denote the subspace of $BM(X)$ consisting of all continuous functions. Further, $L(X)$ will stand for the family of all real-valued Lipschitz continuous functions on~$X$, i.e., those \hbox{$f:X\to\mathbb{R}$} for which
$$\lip f:=\sup\left\{\frac{|f(x)-f(y)|}{\rho(x,y)}:\; x,y\in X,\; x\neq y\right\}<\infty,$$
while $BL(X)$ will represent the space of all bounded functions in $L(X)$, endowed with the norm~$\norma{\cdot}_{BL}$, given by
$$\norma{f}_{BL}:=\norma{f}_{\infty}+\lip f.$$

Moreover, we let $\mathcal{M}_1(X)$ denote the family of all Borel probability measures on $X$, and, for every $r>0$, we define $\mathcal{M}_{1,r}(X)$ as the subfamily of $\mathcal{M}_1(X)$ consisting of all measures with finite $r$-th moment, i.e., those $\mu\in\mathcal{M}_1(X)$ that satisfy $\int_X \rho(x_0,x)^r(x)\,\mu(dx)<\infty$, with some $x_0\in X$ (clearly, this definition is independent of the choice of $x_0$). The symbol $\delta_x$ will stand for the Dirac measure at $x$, i.e., $\delta_x(A):=\mathbbm{1}_A(x)$ for $A\in \mathcal{B}(X)$. Additionally, given any Borel measurable function $f:X\to\mathbb{R}$ and any finite signed Borel measure~$\mu$ on $X$, we shall use the notation 
 $$\<f,\mu\>:=\int_X f(x)\,\mu(dx),$$
whenever the integral on the right-hand is well-defined.

Let us recall that a sequence $\{\mu_n\}_{n\in\n}\subset \mathcal{M}_1(X)$ is called \emph{weakly convergent} to a measure $\mu\in\mathcal{M}_1(X)$ (which is denoted as $\mu_n\stackrel{w}{\to}\mu$) if $\lim_{n\to\infty} \<f,\mu_n\>=\<f,\mu\>$ for every $f\in BC(X)$. It is well known (see \cite[Theorems 6 and 8]{b:dudley}) that the weak convergence of Borel probability measures on $X$ can be metrized by the so-called \emph{bounded Lipschitz distance} (also referred to as Dudley's or Fortet--Mourier's metric), given by
$$d_{BL}(\mu,\nu):=\sup\left\{|\<f,\mu-\nu\>|:\; f\in BL(X),\; \norma{f}_{BL}\leq 1\right\}\quad\text{for}\quad \mu,\nu\in\mathcal{M}_1(X),$$
and that the space $(\mathcal{M}_1(X),d_{BL})$ is complete (see, e.g., \cite[Theorem 9]{b:dudley}). 

The metric $d_{BL}$ will only serve as an auxiliary tool in our study. The primary assumption of this paper, i.e., the exponential mixing property (see condition \ref{cnd:a3}), will be formulated in terms of the \emph{Wasserstein distance} (also known as the Kantorovich–Rubinstein or the Hutchinson metric; see \cite{b:hutchinson}), which is defined on $\mathcal{M}_{1,1}(X)$ as
$$d_W(\mu,\nu):=\sup\left\{|\<f,\mu-\nu\>|:\; f\in L(X),\; \lip f\leq 1\right\}\quad\text{for}\quad \mu,\nu\in\mathcal{M}_{1,1}(X).$$
Notably, this definition remains unchanged if the supremum is taken over $f\in BL(X)$ instead of $f\in L(X)$ (see, e.g., \cite[Lemma 3.1]{b:kapica}). Clearly, $d_W$ is stronger than $d_{BL}$. The main reason for requiring the aforementioned property under $d_W$ rather than $d_{BL}$ lies in the inequality
$$|\<f,\mu-\nu\>|\leq (\lip f)d_W(\mu,\nu)\quad\text{for all}\quad f\in L(X),\;\mu,\nu\in\mathcal{M}_{1,1}(X),$$
which will be crucial for establishing Lemma \ref{lem:5} (see Lemma \ref{lem-c:4}, used in its proof). By contrast, the analogous inequality involving $d_{BL}$ (for $f\in BL(X)$) holds with $\norma{g}_{BL}$ in place of $\lip f$, which appears insufficient for our approach.

In the following, we will review several key concepts and introduce the necessary notation related to Markov processes.

A function $P:X\times\mathcal{B}(X)\to [0,1]$ is called a \emph{stochastic kernel} on $X$ if, for each \hbox{$A\in\mathcal{B}(X)$}, $x\mapsto P(x,A)$ is Borel measurable, and, for every $x\in X$, $A\mapsto P(x,A)$ is a Borel probability measure.  The composition of any two such kernels, say $P$ and $Q$, is the kernel $PQ$ defined as
\begin{equation}
\label{e:composition}
PQ(x,A):=\<Q(\cdot, A),\, P(x,\cdot)\>\quad\text{for all}\quad x\in X,\; A\in\mathcal{B}(X).
\end{equation}

Given a stochastic kernel $P$ (on $X$), for any measure $\mu\in\mathcal{M}_1(X)$ and any bounded below Borel measureable function  $f:X\to\mathbb{R}$, we shall write $\mu P$ to denote the (Borel probability) measure defined as
$$\mu P(A):=\<P(\cdot,A),\,\mu\>\quad\text{for}\quad A\in\mathcal{B}(X),$$
and $Pf$ for the (Borel measurable) function acting from $X$ to $[0,\infty]$, given by
$$Pf(x):=\<f,\,P(x,\cdot)\>\quad\text{for}\quad x\in X.$$
Obviously, $\<f,\mu P\>=\<Pf,\mu\>$. The map \hbox{$(\cdot)P:\mathcal{M}_1(X)\to \mathcal{M}_1(X)$} is usually referred to as a~\emph{Markov operator}, while \hbox{$P(\cdot):BL(X)\to BL(X)$} is said to be its dual operator.

Let $\mathbb{T}\neq \emptyset$ be an additive submonoid of $(\mathbb{R},+,0)$. A family $\{P_t\}_{t\in\mathbb{T}}$ of stochastic kernels (on~$X$) is called a~Markov semigroup whenever the map $\mathbb{T}\times X\ni (t,x)\mapsto P_t(x,A)$ is \mbox{$\mathcal{B}(\mathbb{T})\otimes\mathcal{B}(X)/\mathcal{B}(\mathbb{R})$} - measurable for each $A\in\mathcal{B}(X)$, $P_0(x,\cdot)=\delta_x$ for every $x\in X$, and $P_{s+t}=P_sP_t$ (in the sense of~\eqref{e:composition}) for any~$s,t\in\mathbb{T}$. Clearly, the corresponding families of Markov operators and their duals then enjoy the latter property under the usual composition of functions.

A measure $\mu_*\in \mathcal{M}_1(X)$ is said to be \emph{invariant} (or \emph{stationary}) for a Markov semigroup~$\{P_t\}_{t\in\mathbb{T}}$ if $\mu_* P_t=\mu_*$ for all $t\in\mathbb{T}$.

Now, suppose we are given a Markov semigroup $\{P_t\}_{t\in\mathbb{T}}$, and let $(\Omega, \mathcal{F})$ be a measurable space, equipped with a set $\{\pr_x\}_{x\in X}$ of probability measures on it and a filtration \hbox{$\{\mathcal{F}_t\}_{t\in\mathbb{T}}\subset\mathcal{F}$}. Moreover, let $\{\theta_t\}_{t\in\mathbb{T}}$ be a family of mappings from $\Omega$ into itself. Following \hbox{\cite[Ch. I, §3]{b:blumenthal}}, we will understand a~time-homogeneous Markov process with state space $X$ and transition semigroup $\{P_t\}_{t\in\mathbb{T}}$ as the system $$\Phi:=\left(\Omega,\mathcal{F},\{\mathcal{F}_t\}_{t\in\mathbb{T}},\{\Phi_t\}_{t\in\mathbb{T}},\{\theta_t\}_{t\in\mathbb{T}},\{\pr_x\}_{x\in X}\right),$$ provided the following conditions hold:
\begin{gather}
\label{m:1}  \Phi_t:\Omega\to X\;\; \text{is}\;\;\mathcal{F}_t/\mathcal{B}(X)\text{ - measurable\; for every} \quad t\in\mathbb{T},\\
\label{m:2} \Phi_{s+t}=\Phi_s\circ \theta_t \quad\text{for any}\quad s,t\in\mathbb{T},\\
\label{m:3}\pr_x(\Phi_0=x)=1\quad \text{for every}\quad x\in X,\\
\pr_x(\Phi_{t+s}\in A\,|\,\mathcal{F}_s)=\pr_{\Phi_s}(\Phi_t\in A)\;\; \pr_x\text{-a.s.} \quad\text{for any} \quad x\in X, 
\label{m:4}\;A\in\mathcal{B}(X),\; s,t\in\mathbb{T},\\
\label{m:5}\pr_x(\Phi_t\in A)=P_t(x,A) \quad\text{for any}\quad  x\in X,\; A\in\mathcal{B}(X), \; t\in\mathbb{T}.
\end{gather}
If, additionally, for every $t\in\mathbb{T}$, the mapping $(\mathbb{T}\cap[0,t])\times \Omega \ni (s,\omega) \mapsto \Phi_s(\omega)\in X$ is~\hbox{$\mathcal{B}(\mathbb{T}\cap[0,t])\otimes \mathcal{F}_t\,/\,\mathcal{B}(X)$} - measurable, then $\Phi$ is called \emph{progressively measurable}. It~is worth noting here that \eqref{m:1} and \eqref{m:2} imply, in particular, that, for each $t\geq 0$, the shift~$\theta_t$ is both $\mathcal{F}_{s+t}/\mathcal{F}_s$-measurable with any $s\geq 0$ and $\mathcal{F}/\mathcal{F}$-measurable. 

For a Markov process $\Phi$ defined as above, it is clear that conditions \eqref{m:1} and \eqref{m:4} remain valid if~$\mathcal{F}$ and $\mathcal{F}_t$, $t\in\mathbb{T}$, are replaced by 
\begin{equation}
\label{e:filt}
\mathcal{F}^{\Phi}_{\infty}:=\sigma(\{\Phi_s:\;s\in\mathbb{T}\})\quad\text{and}\quad\mathcal{F}_t^{\Phi}:=\sigma(\{\Phi_s:\;s\in\mathbb{T}\cap [0,t]\}),\;\;t\in\mathbb{T},
\end{equation}
respectively. Furthermore, by repeatedly using the Markov property (i.e., \eqref{m:4} and \eqref{m:5}), it is easy to check that, for any $f\in BM(X^n)$, any $0<s_1<\ldots<s_n$ in $\mathbb{T}$, and every $x\in X$,
\begin{align}
\label{e:Ex}
\begin{split}
&\ew_x\left[f\left (\Phi_{s_1},\ldots,\Phi_{s_n}\right) \right]\\
&=\int_X\int_X\ldots\int_X f(x_1,\ldots,x_n)\,P_{s_n-s_{n-1}}(x_{n-1},dx_n)\ldots P_{s_2-s_1}(x_1,dx_2)P_{s_1}(x,dx_1),
\end{split}
\end{align}
where $\ew_x$ denotes the expectation with respect to $\pr_x$. This implies, in particular,  that the map $X\ni x\mapsto \ew_x[Y]$ is Borel measurable for each bounded $\mathcal{F}_{\infty}^{\Phi}$-measurable random variable~$Y$. For every $\nu\in\mathcal{M}_1(X)$, we can therefore define $\pr_{\nu}$ as
$$\pr_{\nu}(F):=\int_X \pr_x(F)\,\nu(dx)\quad \text{for}\quad F\in\mathcal{F}_{\infty}^{\Phi}.$$
Under this definition, \eqref{m:4} remains valid with $\pr_{\nu}$ in place of $\pr_x$. Putting it more generally, for any bounded $\mathcal{F}_{\infty}^{\Phi}$-measurable random variable $Y$ and any $s\geq 0$, we have
\begin{equation}
\label{e:mar_gen}
\ew_{\nu}\left[Y\circ \theta_s\,|\,\mathcal{F}_s\right]=\ew_{\Phi_s}[Y]\quad \pr_{\nu}\text{ - a.s.},
\end{equation}
where $\ew_{\nu}$ denotes the expectation operator under~$\pr_{\nu}$. Obviously, when the process $\{\Phi_t\}_{t\in\mathbb{T}}$ is considered on $(\Omega, \mathcal{F}_{\infty}^{\Phi},\pr_{\nu})$, then $\nu$ serves as its initial distribution, and $\nu P_t$ is the distribution of~$\Phi_t$ for every $t\in\mathbb{T}$.

Finally, let us introduce one more piece of notation. Given a probability space $(\Omega,\mathcal{F},\pr)$ and $r\in [1,\infty)$, as is customary, we will write $\mathcal{L}^r(\pr)$ for the of space all real-valued random variables on $(\Omega,\mathcal{F},\pr)$ with a finite $r$-th moment, equipped with the norm defined as
$$\norma{Y}_{\mathcal{L}^r(\pr)}:=\left(\ew\left[|Y|^r\right]\right)^{1/r}\quad\text{for}\quad Y\in\mathcal{L}^r(\pr).$$
Furthermore, in Section \ref{sec:4}, we will make use of the space $\mathcal{L}^2_{S,T}(\lambda\otimes\pr)$ (where $S<T$), consisting of all $\mathcal{B}([S,T])\otimes\mathcal{F}/\mathcal{B}(\mathbb{R})$-measurable functions $\eta:[S,T]\times\Omega\to\mathbb{R}$ such that
$$\Lnorma{S,T}{}{\eta}:=\left(\int_{[S,T]\otimes\Omega} \eta^2(t,\omega)\,(\lambda\otimes\pr)(dt\times d\omega)\right)^{1/2}<\infty,$$
where $\lambda$ stands for the $1$-dimensional Lebesgue measure.  When referring to this space, we will identify functions $\eta:[S,T]\times\Omega\to\mathbb{R}$ with processes \hbox{$\{\eta_t\}_{t\in [S,T]}$}.

\section{Assumptions and Formulation of the Main Result} \label{sec:2}
Let $\Phi:=\left(\Omega,\mathcal{F},\{\mathcal{F}_t\}_{t\geq 0},\{\Phi_t\}_{t\geq 0},\{\theta_t\}_{t\geq 0},\{\pr_x\}_{x\in X}\right)$ be a time-homogeneous, progressively measurable Markov process with state space $X$ and transition semigroup $\{P_t\}_{t\geq 0}$, where
$$\mathcal{F}=\mathcal{F}_{\infty}^{\Phi},\quad\text{and}\quad\mathcal{F}_t=\mathcal{F}_t^{\Phi}\quad\text{for}\quad t\geq 0,$$
are defined according to \eqref{e:filt}, with $\mathbb{T}=[0,\infty)$.
This assumption applies to the whole paper and will not be repeated afterwards.

In what follows, we also employ the following conditions:
\begin{enumerate}[label=\textnormal{(A\arabic*)}, leftmargin=*]
\item \label{cnd:a1} $\{P_t\}_{t\geq 0}$ is \emph{stochastically continuous}, i.e.,
$$\lim_{t\to 0^+} P_t f(x)=f(x)\quad \text{for all} \quad x\in X,\; f\in BC(X);$$
\item \label{cnd:a2} $\{P_t\}_{t\geq 0}$ is \emph{Feller}, i.e., $P_t f\in BC(X)$ for any $f\in BC(X)$ and $t\geq 0$;
\item \label{cnd:a3} We have $\nu P_t\in \mathcal{M}_{1,1}(X)$ for any $\nu\in\mathcal{M}_{1,1}(X)$, $t>0$, and there exists $\gamma\in (0,\infty)$ such that 
$$d_W(\delta_x P_t,\, \delta_y P_t)\leq e^{-\gamma t} \rho(x,y)\quad\text{for all}\quad t\geq 0,\; x,y\in X;$$
\item \label{cnd:a4} For a given measure $\mu\in\mathcal{M}_1(X)$, there exist $x_0\in X$ and $\zeta\in (2,\infty)$ such that
\begin{equation}
\label{e:a4}
\sup_{t\geq 0}\<V^{\zeta},\,\mu P_t\><\infty\quad\text{with}\quad V:=\rho(x_0,\cdot),
\end{equation}
which, in particular, ensures that $\mu\in\mathcal{M}_{1,\zeta}(X)$.
\end{enumerate}
Moreover, for the sake of further considerations, it will be convenient to adopt the notation
$$\quad c_r(\nu):=\sup_{t\geq 0}\<V^r,\,\nu P_t\> \quad\text{for}\quad \nu\in\mathcal{M}_1(X),\; r>0.$$

Regarding condition~\ref{cnd:a3}, as already mentioned in the introduction, our approach to proving the main result unfortunately does not permit the insertion of an arbitrary multiplier $c > 0$ on the right-hand side of the inequality stated therein, due to the structure of the proof of Lemma~\ref{lem:5}. To be more precise, the requirement that $P_t$ acts as a contraction on the set of Dirac measures for all $t > 0$ is critical for the proof of Lemma~\ref{lem-c:8}, which constitutes a key step in the argument leading to Lemma~\ref{lem:5}. This, in turn, is strictly related to the nature of the estimates involved in the proof of Lemma \ref{lem-c:4}, upon which Lemma~\ref{lem-c:8} relies. Nonetheless, as also previously noted, it is not difficult to identify models in which condition~\ref{cnd:a3} is satisfied.

\begin{remark}\label{rem:1} From condition \ref{cnd:a3} it follows that 
\begin{equation*}
d_W(\nu_1 P_t,\nu_2 P_t )\leq e^{-\gamma t}\<V,\, \nu_1+\nu_2\>\quad\text{for all}\quad t\geq 0,\; \nu_1,\nu_2\in\mathcal{M}_{1,1}(X).
\end{equation*}
Indeed, it suffices to observe that, for every function $f\in L(X)$ with $\lip f\leq 1$ and any measures \hbox{$\nu_1,\nu_2\in\mathcal{M}_{1,1}(X)$},
\begin{align*}
|\<f,\nu_1 P_t\>-\<f,\nu_2 P_t\>|&\leq \int_X \int_X |P_t f(x)-P_t f(y)|\,\nu_1(dx)\nu_2(dy) 
\\
&\leq \int_X \int_X d_W(\delta_x P_t, \,\delta_y P_t)\,\nu_1(dx)\nu_2(dy)\\
&\leq e^{-\gamma t} \int_X \int_X \rho(x,y)\,\nu_1(dx)\nu_2(dy)\\
&\leq e^{-\gamma t} \int_X \int_X (V(x)+V(y))\,\nu_1(dx)\nu_2(dy)=e^{-\gamma t}\<V,\, \nu_1+\nu_2\>.
\end{align*}
\end{remark}

\begin{remark}\label{rem:2} For every $\nu\in\mathcal{M}_1(X)$, if $c_{\zeta}(\nu)<\infty$, then $c_r(\nu)<\infty$ for all $r\in (0,\zeta]$. Consequently, in particular, condition \ref{cnd:a4} guarantees that $c_r(\mu)<\infty$ for all $r\in (0,\zeta]$. To see this, it is enough to note that H\"older's inequality (applied with exponents $p=\zeta/r$ and $q=p/(p-1)$ to functions $V$ and $\mathbbm{1}_X$, respectively) gives $$\<V^r, \nu\>\leq\<V^{\zeta},\nu\>^{r/\zeta},$$
which, in turn, leads to $c_r(\nu)\leq c_{\zeta}^{r/\zeta}(\nu)$. 
\end{remark}

\begin{remark}\label{rem:3}
Clearly, if there exist constants $a,b,\delta\geq 0$ such that 
\begin{equation}\label{e:Lap_drift}
P_tV^{\zeta}(x)\leq a e^{-\delta t}V^{\zeta}(x)+b \quad\text{for any}\quad t\geq 0,\; x\in X,
\end{equation}
then \ref{cnd:a4} holds for every $\mu\in\mathcal{M}_{1,\zeta}(X)$. When \eqref{e:Lap_drift} is satisfied with $\delta>0$, it is commonly referred to as a \emph{Lyapunov drift condition} for the semigroup $\{P_t\}_{t\geq 0}$ with Lyapunov function~$V^{\zeta}$.
\end{remark}

\begin{remark}
It is clear that hypotheses \ref{cnd:a3} and \ref{cnd:a4} yield that $\{P_t\}_{t\geq 0}$ enjoys the Lyapunov condition with $V$. Indeed, referring to Remark \ref{rem:1}, we see that for any $x\in X$ and~$t\geq 0$,
\begin{align*}
P_t V(x)&=P_t V(x) - \<V,\mu P_t\>+\<V,\mu P_t\>=\<V,\, \delta_x P_t - \mu P_t\>+\<V,\mu P_t\>\\
&\leq d_W(\delta_x P_t,\,\mu P_t)+\<V,\mu P_t\>\leq e^{-\gamma t}\<V,\delta_x+\mu\>+\<V,\mu P_t\>\\
&\leq e^{-\gamma t} V(x)+\<V,\mu\>+\<V,\mu P_t\>\leq e^{-\gamma t} V(x)+2c_1(\mu).
\end{align*}
One may therefore ask whether these two conditions might also imply the Lyapunov drift with~$V^{\zeta}$. In view of Remark \ref{rem:3}, this would mean that \ref{cnd:a3} and \ref{cnd:a4} actually guarantee that \eqref{e:a4} holds for every measure $\mu\in\mathcal{M}_{1,\zeta}(X)$, and thus the forthcoming Theorem \ref{thm:main} would be valid for all initial distributions in $\mathcal{M}_{1,\zeta}(X)$. Unfortunately, this is not the case, as demonstrated in the counterexample below.
\end{remark}

\begin{example}\label{example:1}
Fix arbitrary $\gamma>0$ and $\zeta>2$ (in fact, any $\zeta>1$ suffices here), and take $x_0=0$. We will construct a time-homogeneous, progressively measurable Markov process with state space $(\mathbb{R},\,|\cdot|)$ whose transition semigroup enjoys hypotheses \ref{cnd:a1}-\ref{cnd:a3}, while \ref{cnd:a4} (with $V(x)=|x|$) holds only for $\mu=\delta_0$, and in particular condition \eqref{e:Lap_drift} fails.

On a suitable probability space $(\Omega,\mathcal{F},\pr)$, consider a~Poisson process $\{N_t\}_{t\geq 0}$ with rate $1$ and a sequence $\{Y_n\}_{n\in\n}$ of mutually independent, identically distributed random variables, independent of $\{N_t\}_{t\geq 0}$, with the two-point law
$\pr(Y_1=K)=\pr(Y_1=0)=1/2$, where $K>0$ is large enough so that
\begin{equation}\label{e:large_M}
e^{\zeta K}-\zeta e^{K}>2\zeta\gamma.
\end{equation}
Further, introduce the compound Poisson process $\{X_t\}_{t\geq 0}$ given by
$$X_0:=0,\quad X_t=\sum_{k=1}^{N_t} Y_t\quad\text{for}\quad t>0,$$
and define 
\begin{equation}\label{def:contr_Markov}
\Phi_t:=\Phi_0 \exp(-\alpha t+X_t)\quad\text{for}\quad t>0,\quad \text{with}\quad \alpha:=\frac{1}{2}\left(e^K-1\right)+\gamma>0,
\end{equation}
where $\Phi_0$ is an $\mathbb{R}$-valued random variable that is independent of $\{X_t\}_{t\geq 0}$. Moreover, let $\mathbb{R}\times\mathcal{F}\ni (x,F)\mapsto\pr(F\,|\,\Phi_0=x)$ be a regular conditional distribution of~$\pr$ given~$\Phi_0$ (which exists as $(\mathbb{R},\mathcal{B}(\mathbb{R}))$ is a standard Borel space), and put $\pr_x=\pr(\cdot\,|\,\Phi_0=x)$ for $x\in\mathbb{R}$.

We will first show that, for every $x\in \mathbb{R}$, the process $\Phi$ defined in \eqref{def:contr_Markov} enjoys the Markov property under $\pr_x$ with respect to its natural filtration $\{\mathcal{F}_t\}_{t\geq 0}$. To this end, we begin by proving that $\Phi$ is a Markov process with respect to the natural filtration $\{\mathcal{H}_t\}_{t\geq 0}$ of~$\{X_t\}_{t\geq 0}$. Let~$A\in\mathcal{B}(\mathbb{R})$, $s,t\geq 0$, and set $X_t^{(s)}:=X_{t+s}-X_s$. Then
\begin{align}\label{e:phits}
\begin{split}
\Phi_{t+s}&=\Phi_0\exp\left(-\alpha(t+s)+X_{t+s} \right)=\Phi_0 \exp\left(-\alpha s +X_s \right) \exp\left(-\alpha t +X_{t+s}-X_s \right)\\
&=\Phi_s \exp\left(-\alpha t+X_t^{(s)}\right).
\end{split}
\end{align} 
Furthermore, due to the independent and stationary increments of the compound Poisson process, $X_t^{(s)}$ is independent of~$\mathcal{H}_s$, and $X_t^{(s)}\stackrel{d}{=}X_t$. This, together with \eqref{e:phits} and the~\hbox{$\mathcal{H}_s$-measurabilty} of $\Phi_s$, gives
\begin{align*}
\pr_x\left( \Phi_{t+s}\in A \,|\,\mathcal{H}_s \right)(\omega)
&=\ew_x\left[\mathbbm{1}_A(\Phi_{t+s})\,|\,\mathcal{H}_s \right](\omega)
=\ew_x\left[\mathbbm{1}_A\left(\Phi_s \exp\left(-\alpha t+ X_{t}^{(s)}\right)\right)\,|\,\mathcal{H}_s \right](\omega)\\
&=\ew_x\left[\mathbbm{1}_A\left(\Phi_s(\omega) \exp\left(-\alpha t+ X_{t}^{(s)}\right)\right) \right]\\
&=\ew_{\Phi_s(\omega)}\left[ \mathbbm{1}_A\left(\Phi_0 \exp\left(-\alpha t+ X_{t}\right)\right)\right]=\ew_{\Phi_s(\omega)}\left[\mathbbm{1}_A(\Phi_t) \right]=\pr_{\Phi_s(\omega)}\left(\Phi_t\in A\right)
\end{align*}
for $\pr_x$-a.e. $\omega\in\Omega$. Finally, since $\mathcal{F}_s\subset\mathcal{H}_s$ and $\pr_{\Phi_s}\left(\Phi_t\in A\right)$ is $\mathcal{F}_s$-measurable, it follows that
\begin{align*}
\pr_x\left( \Phi_{t+s}\in A \,|\,\mathcal{F}_s \right)
=\ew_x\left[\pr_x\left( \Phi_{t+s}\in A \,|\,\mathcal{H}_s \right) \,|\,\mathcal{F}_s \right]=\ew_x\left[ \pr_{\Phi_s}\left(\Phi_t\in A\right) \, | \, \mathcal{F}_s\right]=\pr_{\Phi_s}\left(\Phi_t\in A\right)\;\text{a.s}.
\end{align*}
Additionally, $\Phi$ is progressively measurable since its sample paths are right-continuous. This follows from the fact that $\{X_t\}_{t\geq 0}$ is C\`{a}dl\`{a}g, which implies that $\Phi$ is C\`{a}dl\`{a}g too.

In light of the above, $\Phi$ can be viewed as a Markov process with transition semigroup $\{P_t\}_{t\geq 0}$ given~by
$$P_t(x,A)=\pr_x(\Phi_t\in A)=\pr\left( x\exp\left(-\alpha t+X_t\right)\in A \right)\quad \text{for}\quad t\geq 0,\;x\in \mathbb{R},\; A\in\mathcal{B}(\mathbb{R}).$$
Clearly, for any $t\geq 0$, $x\in \mathbb{R}$, and bounded below
Borel measureable function \hbox{$f:\mathbb{R}\to\mathbb{R}$},
\begin{equation}\label{e:P_tf-ex}
P_t f(x)=\ew\left[f\left(x\exp\left(-\alpha t+X_t\right)\right)\right].
\end{equation}
In what follows, we will prove that $\{P_t\}_{t\geq 0}$ fulfills hypotheses \ref{cnd:a1}-\ref{cnd:a3}. 

Conditions \ref{cnd:a1} and \ref{cnd:a2} follow immediately from \eqref{e:P_tf-ex} by using the a.s. convergence $X_t\to 0$ as~$t\to 0^+$ (in the case of \ref{cnd:a1}) and the Lebesgue dominated convergence theorem.

To verify \ref{cnd:a3}, we first compute $\ew \exp(X_t)$. For any $t>0$, as $N_t$ has the Poisson distribution with parameter $t$, we get 
\begin{align}\label{e:EexpXt}
\begin{split}
\ew\exp(X_t)&=\pr(N_t=0)+\sum_{n=1}^{\infty}\ew\left[\mathbbm{1}_{\{ N_t=n\}}\prod_{k=1}^n \exp(Y_k) \right]\\&
=\pr(N_t=0)+\sum_{n=1}^{\infty}\pr(N_t=n)\left(\ew \exp(Y_1)\right)^n 
=\exp(-t)\sum_{n=0}^{\infty} \frac{\left(t\,\ew \exp(Y_1)\right)^n}{n!}\\
&=\exp\big((\ew \exp(Y_1)-1)t\big)=\exp\left(\left(\frac{1}{2}e^K+\frac{1}{2}-1 \right)t\right)=\exp\left(\frac{1}{2}\left(e^K-1\right)t \right)\\
&=\exp((\alpha-\gamma)t),
\end{split}
\end{align}
where the last equality follows from the definition of $\alpha$. Of course, the formula is also valid with $t=0$, since $X_0=0$. Given this, it suffices to observe that
\begin{align*}
\<V,\mu P_t\>&=\<P_t V,\mu\>=\int_{\mathbb{R}} \ew |x\exp(-\alpha t+X_t)|\,\mu(dx)=\exp(-\alpha t)\ew \exp(X_t)\int_{\mathbb{R}}|x|\,\mu(dx)\\
&=\exp(-\gamma t)\<V,\mu\><\infty\quad\text{for any}\quad \mu\in\mathcal{M}_{1,1}(\mathbb{R}),\; t\geq 0,
\end{align*}
and that
\begin{align*}
\left|\<f,\, \delta_x P_t- \delta_y P_t \> \right|&=\left|P_t f(x)-P_t f(y) \right|\leq \ew \left|f\left(x\exp\left(-\alpha t+X_t\right)\right)-f\left(y\exp\left(-\alpha t+X_t\right)\right) \right|\\
&\leq \exp(-\alpha t)\left(\ew \exp X_t\right)|x-y|=\exp(-\gamma t)|x-y|
\end{align*}
for any $f\in L(\mathbb{R})$ with $\lip f\leq 1$, $x,y\in \mathbb{R}$, and $t\geq 0$. 

Finally, it remains to examine hypothesis \ref{cnd:a4}. Arguing analogously as in~\eqref{e:EexpXt}, one concludes that
\begin{align*}
\ew\exp(\zeta X_t)&=\exp\big((\ew \exp(\zeta Y_1)-1)t\big)=\exp\left(\frac{1}{2}\left(e^{\zeta K}-1\right)t \right)\quad\text{for}\quad t\geq 0,
\end{align*}
which gives
\begin{align}\label{e:P_tVzeta}
\begin{split}
P_t V^{\zeta}(x)&=\ew\left[\left|x\exp(-\alpha t+X_t)  \right|^{\zeta}\right]=|x|^{\zeta}\exp(-\alpha \zeta t)\ew\exp(\zeta X_t)\\
&=|x|^{\zeta}\exp\left(\left(\frac{1}{2}\left(e^{\zeta K}-1 \right) -\alpha\zeta\right)t \right)=|x|^{\zeta}\exp(\beta t)\quad\text{for}\quad x\in \mathbb{R},\;t\geq 0,
\end{split}
\end{align}
where $\beta:=\left(e^{\zeta K}-1 \right)/2-\alpha\zeta$. On the other hand,  \eqref{e:large_M} ensures that $\beta>0$, since 
\begin{align*}
\beta&=\frac{1}{2}\left(e^{\zeta K}-1 \right)-\alpha\zeta=\frac{1}{2}\left(e^{\zeta K}-1 \right)-\left(\frac{1}{2}\left(e^K-1\right)+\gamma \right)\zeta\\
&=\frac{1}{2}\left(e^{\zeta K}-\zeta e^K \right)-\zeta\gamma+\frac{1}{2}(\zeta-1).
\end{align*}
This shows that $\mu=\delta_0$ is the only Borel probability measure for which condition~\ref{cnd:a4} is satisfied.
\end{example}

Before stating our main result, we will show that, under assumptions \ref{cnd:a2}-\ref{cnd:a4}, the process $\{\Phi_t\}_{t\geq 0}$ possesses a unique exponentially attracting invariant distribution, which has a finite $\zeta $-th moment. While the proof of this fact closely resembles that of \hbox{\cite[Lemma 3.1]{b:czapla-CLT-cont}}, we provide it here to account for the slightly different assumptions in the current framework.

\begin{proposition}\label{prop:1}
Suppose that conditions \ref{cnd:a2}-\ref{cnd:a4} hold. Then $\{P_t\}_{t\geq 0}$ possesses a~unique invariant probability measure $\mu_*$, which belongs to $\mathcal{M}_{1,\zeta}(X)$. Moreover, $\{P_t\}_{t\geq 0}$ is then asymptotically stable, i.e., $\nu P_t\stackrel{w}{\to}\mu_*$ for every $\nu\in\mathcal{M}_1(X)$, and there exists $C>0$ such that
\begin{equation}\label{e:ergodicity}
d_W (\nu P_t, \mu_*)\leq C e^{-\gamma t} (\<V,\nu\>+1)\quad\text{for any}\quad t\geq 0,\; \nu\in\mathcal{M}_{1,1}(X).
\end{equation}
\end{proposition}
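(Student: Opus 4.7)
The plan is to construct $\mu_*$ as the $d_W$-limit of the trajectory $\{\mu P_t\}_{t\geq 0}$, where $\mu$ is the measure furnished by condition \ref{cnd:a4}. The crux is a Cauchy estimate: for $s,t\geq 0$, the semigroup property together with Remark \ref{rem:1} yields
$$d_W(\mu P_{t+s},\,\mu P_t)=d_W((\mu P_s)P_t,\,\mu P_t)\leq e^{-\gamma t}\bigl(\langle V,\mu P_s\rangle+\langle V,\mu\rangle\bigr)\leq 2c_1(\mu)e^{-\gamma t},$$
and by Remark \ref{rem:2} combined with \ref{cnd:a4} we have $c_1(\mu)<\infty$. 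Hence $\{\mu P_t\}_{t\geq 0}$ is Cauchy in $d_W$; since $d_{BL}\leq d_W$ and $(\mathcal{M}_1(X),d_{BL})$ is complete, the family converges weakly to some $\mu_*\in\mathcal{M}_1(X)$. To obtain $\mu_*\in\mathcal{M}_{1,\zeta}(X)$, I would test against the bounded continuous truncations $V^{\zeta}\wedge n$: the identity $\langle V^{\zeta}\wedge n,\mu_*\rangle=\lim_{t\to\infty}\langle V^{\zeta}\wedge n,\mu P_t\rangle\leq c_{\zeta}(\mu)$, followed by monotone convergence in $n$, gives $\langle V^{\zeta},\mu_*\rangle\leq c_{\zeta}(\mu)<\infty$.

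Invariance of $\mu_*$ then follows from the Feller property \ref{cnd:a2}: for every $f\in BC(X)$ and $s\geq 0$,
$$\langle f,\mu_* P_s\rangle=\langle P_s f,\mu_*\rangle=\lim_{t\to\infty}\langle P_s f,\mu P_t\rangle=\lim_{t\to\infty}\langle f,\mu P_{t+s}\rangle=\langle f,\mu_*\rangle,$$
so $\mu_* P_s=\mu_*$. The exponential rate \eqref{e:ergodicity} now drops out of invariance and a second application of Remark \ref{rem:1}: for any $\nu\in\mathcal{M}_{1,1}(X)$,
$$d_W(\nu P_t,\mu_*)=d_W(\nu P_t,\mu_* P_t)\leq e^{-\gamma t}\bigl(\langle V,\nu\rangle+\langle V,\mu_*\rangle\bigr),$$
which gives the desired estimate with $C:=\max\{1,\langle V,\mu_*\rangle\}$ (finite since $\mu_*\in\mathcal{M}_{1,\zeta}\subset\mathcal{M}_{1,1}$).

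Finally, to promote asymptotic stability to the full class $\mathcal{M}_1(X)$, I would specialize \eqref{e:ergodicity} to Dirac masses $\delta_x\in\mathcal{M}_{1,1}(X)$, obtaining the pointwise convergence $P_tf(x)\to\langle f,\mu_*\rangle$ for every $f\in BC(X)$; combined with the uniform bound $\|P_tf\|_{\infty}\leq\|f\|_{\infty}$, dominated convergence then gives $\langle f,\nu P_t\rangle=\langle P_tf,\nu\rangle\to\langle f,\mu_*\rangle$ for an arbitrary $\nu\in\mathcal{M}_1(X)$, i.e.\ $\nu P_t\stackrel{w}{\to}\mu_*$. Uniqueness is immediate: any invariant probability measure $\mu_0$ satisfies $\mu_0=\mu_0 P_t\stackrel{w}{\to}\mu_*$, forcing $\mu_0=\mu_*$. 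I do not expect a serious obstacle; the only mildly delicate point is extracting the $\zeta$-th moment bound on $\mu_*$, which cannot be obtained by direct passage to the limit (as $V^{\zeta}$ is unbounded) and forces the truncation-plus-monotone-convergence device outlined above.
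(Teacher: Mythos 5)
Your proposal is correct and follows essentially the same route as the paper's proof: the Cauchy estimate from Remark \ref{rem:1} plus completeness of $(\mathcal{M}_1(X),d_{BL})$ to produce $\mu_*$, the Feller property for invariance, truncation of $V^{\zeta}$ with monotone convergence for the moment bound, Remark \ref{rem:1} again for \eqref{e:ergodicity}, and a dominated-convergence argument over $\nu$ for asymptotic stability and uniqueness. The only differences are cosmetic (the paper works with discrete sequences $\{\mu P_{t+n}\}_{n}$ and takes $C=\<V,\mu_*\>+1$ instead of $\max\{1,\<V,\mu_*\>\}$).
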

\begin{proof}
Keeping in mind Remark \ref{rem:1}, we see that
$$
d_W(\mu P_t,\mu P_{t+s})\leq e^{-\gamma t} (\<V,\mu\>+ \<V,\mu P_s\>)\leq 2e^{-\gamma t}c_1(\mu)\quad\text{for any}\quad s,t\geq 0,
$$
and $c_1(\mu)<\infty$ by Remark \ref{rem:2}. This, in particular, gives
\begin{equation}
\label{e1:1}
d_{BL}(\mu P_{t+n},\,\mu P_{t+n+m})\leq 2e^{-\gamma(t+n)}c_1(\mu)\quad\text{for all}\quad t\geq 0,\;n,m\in\n,
\end{equation}
\begin{equation}
\label{e1:2}
d_{BL}(\mu P_{s+n},\,\mu P_{t+n})\leq 2e^{-\gamma((s\wedge t)+n)}c_1(\mu)\quad\text{for all}\quad s,t\geq 0,\;n\in\n.
\end{equation}
From \eqref{e1:1} it follows that $\{\mu P_{t+n}\}_{n\in\n}$, $t\geq 0$, are Cauchy sequences in the space $(\mathcal{M}_1(X), d_{BL})$ (which is complete), and thus they are convergent, while \eqref{e1:2} shows that all of them have the same limit. Consequently, there exists $\mu_*\in\mathcal{M}_1(X)$ such that $\mu P_{t+n}\stackrel{w}{\to}\mu_*$, as $n\to \infty$, for every $t\geq 0$. 

Condition \ref{cnd:a2}, in turn, guarantees that the measure $\mu_*$ is invariant for $\{P_t\}_{t\geq 0}$, since for any \hbox{$f\in BC(X)$} and $t\geq 0$,
$$\<f,\, \mu_* P_t\>=\<P_t f,\, \mu_*\>=\lim_{n\to \infty}\<P_t f,\, \mu P_n\>=\lim_{n\to \infty}\<f,\, \mu P_{t+n}\>=\<f,\mu_*\>.$$

Furthermore, note that $\mu_*\in\mathcal{M}_{1,\zeta}(X)$. Indeed, hypothesis \ref{cnd:a4} yields that
$$\<V^{\zeta}\wedge k, \, \mu_*\>=\lim_{n\to\infty}\<V^{\zeta}\wedge k, \,\mu P_n\>\leq c_{\zeta}(\mu)<\infty\quad\text{for all}\quad k\in\n,$$
where the equality follows from the fact that $V^{\zeta}\wedge k\in BC(X)$, $k\in\n$. Since $V^{\zeta}\wedge k \uparrow V^{\zeta}$ point-wisely as $k\to\infty$, by using the Lebesgue monotone convergence theorem, we can conclude that 
$$\<V^{\zeta},\mu_*\>=\lim_{k\to \infty} \<V^{\zeta}\wedge k, \, \mu_*\>\leq c_{\zeta}(\mu)<\infty.$$

Obviously, in view of the latter observation and Remark \ref{rem:1}, \hbox{$\lim_{t\to\infty} d_{BL}(\delta_x P_t ,\mu_*)=0$} for every $x\in X$. On the other hand, it is easy to see that
$$
d_{BL}(\nu P_t, \mu_*)\leq \int_X d_{BL}(\delta_x P_t,\mu_*)\,\nu(dx)\quad \text{for any}\quad \nu\in\mathcal{M}_1(X).$$
Therefore, applying the Lebesgue’s dominated convergence theorem, we can deduce that $\nu P_t\stackrel{w}{\to} \mu_*$ for every $\nu\in\mathcal{M}_1(X)$, which simultaneously shows that $\{P_t\}_{t\geq 0}$ has exactly one invariant probability measure.

Finally, from Remark \ref{rem:1} it follows that for any $t\geq 0$ and $\nu\in\mathcal{M}_{1,1}(X)$,
\begin{align*}
d_W (\nu P_t, \mu_*)=d_W (\nu P_t, \mu_* P_t)\leq e^{-\gamma t}(\<V, \nu\>+\<V,\mu_*\>)\leq (\<V,\mu_*\>+1)e^{-\gamma t}(\<V,\nu\>+1),
\end{align*}
which shows that \eqref{e:ergodicity} holds with $C:=\<V,\mu_*\>+1$. The proof is now complete.
\end{proof}
In the remainder of the paper, upon assuming conditions \ref{cnd:a2}-\ref{cnd:a4}, the unique invariant probability measure of $\{P_t\}_{t\geq 0}$ (which belongs to $\mathcal{M}_{1,\zeta}(X)$) will be denoted by $\mu_*$, and $g\in BL(X)$ will stand for an arbitrary bounded Lipschitz continuous function such that $\<g,\mu_*\>=0$. Moreover, for convenience, we put
\begin{equation}
\label{def:I}
I_t(g):=\int_0^t g(\Phi_s)\,ds\quad\text{for}\quad t\geq 0.
\end{equation}
It is worth emphasizing here that, by the assumed progressive measurability of $\Phi$ and Fubini’s theorem,  $\{I_t(g)\}_{t\geq 0}$ is adapted to the filtration $\{\mathcal{F}_t\}_{t\geq 0}$. 

Further, note that, as a consequence of Proposition \ref{prop:1}, we get
\begin{equation}
\label{e:P_tgx}
|P_t g(x)|=|\<g, \delta_x P_t\>-\<g,\mu_*\>|\leq (\lip g)\, d_W(\delta_x P_t, \mu_*)\leq C(\lip g) e^{-\gamma t}(V(x)+1)
\end{equation}
for any $t\geq 0$ and $x\in X$. This allows us to define the mapping $\chi_g:X\to\mathbb{R}$, often referred to as a \emph{corrector function}, by
$$\chi_g(x):=\int_0^{\infty} P_t g(x)\,dt\quad\text{for}\quad x\in X.$$

\begin{remark}\label{rem:lip_chi}
By condition \ref{cnd:a3} the function $\chi_g$ is Lipschitz continuous, since
\begin{align*}
|\chi_g(x)-\chi_g(y)|&\leq \int_0^{\infty} |\<g,\,\delta_x P_t\>-\<g,\,\delta_y P_t\>|\,dt 
\leq \lip g \left(\int_0^{\infty} e^{-\gamma t}\, dt \right)\rho(x,y)\\
&=\frac{\lip g}{\gamma}\rho(x,y) \quad\text{for any} \quad x,y\in X.
\end{align*}
\end{remark}
Finally, let us introduce
\begin{equation}
\label{def:sigma}
\sigma_g^2:=\ew_{\mu_*}\left[\left(\chi_g(\Phi_1)-\chi_g(\Phi_0)+I_1(g)\right)^2\right].
\end{equation}
At the end of the paper (in Section \ref{sec:5}), we will demonstrate (assuming also \ref{cnd:a1}) that $\sigma_g^2$ can be equivalently expressed in the following forms:
\begin{equation}
\label{e:sigma_eq}
\sigma_g^2=2\<g\chi_g ,\mu_*\>=\lim_{t\to \infty}\frac{\ew_{\mu}\left[I_t^2(g) \right]}{t}.
\end{equation}
The second equality clearly indicates that $\sigma_g^2$ is the asymptotic variance of the process $\{I_t(g)\}_{t \geq 0}$. It is worth noting that identifying reasonable conditions to guarantee the positivity of the asymptotic variance in a general framework is inherently challenging. Such results are typically achievable only when analyzing specific models. Nevertheless, it can be shown, for example, that $\sigma_g^2 > 0$ if the measures $\delta_x P_t$ and $\mu_*$ are mutually absolutely continuous for all \hbox{$(t, x) \in [0, \infty) \times X$}, and $g \neq 0$ on some Borel subset of $X$ with positive $\mu_*$-measure (see~\hbox{\cite[Proposition 2.4]{b:bhattacharya})}.

Our main result can be now stated as follows.
\begin{theorem}\label{thm:main}
Suppose that conditions \ref{cnd:a1}-\ref{cnd:a4} hold. Then $\sigma_g^2$, given by \eqref{def:sigma}, is \hbox{finite} and, whenever it is positive, the process $\{I_t(g)\}_{t\geq 0}$, defined by \eqref{def:I}, satisfies the LIL under~$\pr_{\mu}$, with~$\sigma_g^2$ serving as its asymptotic variance. Specifically,
$$\liminf_{t\to\infty}\frac{I_t(g)}{\sqrt{2\sigma_g^2\, t\ln\ln t}}=-1
\quad \text{and}\quad \limsup_{t\to\infty}\frac{I_t(g)}{\sqrt{2\sigma_g^2 \,t\ln\ln t}}=1\;\;\;\pr_{\mu}-a.s.$$
\end{theorem}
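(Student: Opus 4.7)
The plan is to follow the three-stage strategy outlined in the introduction: first reduce the continuous-time LIL for $\{I_t(g)\}_{t\geq 0}$ to its discrete-time analogue along the integers, then rewrite the latter as a martingale plus a negligible Lipschitz correction, and finally invoke the Heyde--Scott martingale LIL. The main obstacle will be verifying the Heyde--Scott hypotheses under the non-stationary law $\pr_{\mu}$; this is precisely what Lemma~\ref{lem:5} is designed to handle.

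For the discretisation, since $g\in BL(X)$ the overshoot $\sup_{t\in[n,n+1]}|I_t(g)-I_n(g)|$ is bounded by $\|g\|_{\infty}$, hence $o(\sqrt{n\ln\ln n})$ deterministically, so it suffices to prove the LIL along the integer subsequence $\{I_n(g)\}_{n\in\n}$. For the martingale decomposition, I would set
$$\xi_k:=\chi_g(\Phi_k)-\chi_g(\Phi_{k-1})+I_k(g)-I_{k-1}(g),\qquad M_n:=\sum_{k=1}^n \xi_k=\chi_g(\Phi_n)-\chi_g(\Phi_0)+I_n(g).$$
The definition of $\chi_g$ together with Fubini's theorem yields the Poisson-type identity $\chi_g-P_1\chi_g=\int_0^1 P_s g\,ds$; combining this with the Markov property \eqref{e:mar_gen} gives $\ew[\xi_k\,|\,\mathcal{F}_{k-1}]=0$, so $\{M_n\}$ is an $\{\mathcal{F}_n\}$-martingale. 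The Lipschitz bound on $\chi_g$ (Remark~\ref{rem:lip_chi}), together with the moment hypothesis \ref{cnd:a4} and Remark~\ref{rem:2}, force $(\chi_g(\Phi_n)-\chi_g(\Phi_0))/\sqrt{n\ln\ln n}\to 0$ $\pr_{\mu}$-a.s., so the LIL for $\{I_n(g)\}$ reduces to the LIL for $\{M_n\}$.

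To apply the Heyde--Scott criterion I need (B1) $n^{-1}\sum_{k=1}^n \ew[\xi_k^2\,|\,\mathcal{F}_{k-1}]\to\sigma_g^2$ $\pr_{\mu}$-a.s.\ and (B2) a Lindeberg-type condition, which I would reduce to the uniform bound $\sup_k\ew_{\mu}[|\xi_k|^{2+\delta}]<\infty$ for some $\delta\in(0,\zeta-2]$. The latter follows from boundedness of $g$, the estimate $\lip\chi_g\leq\lip g/\gamma$ (Remark~\ref{rem:lip_chi}), and $\sup_n\<V^{\zeta},\mu P_n\>\leq c_{\zeta}(\mu)<\infty$ from \ref{cnd:a4}. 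Under $\pr_{\mu_*}$, the semigroup identity \eqref{m:2} entails $\xi_{k+1}=\xi_1\circ\theta_k$, and the uniqueness of the invariant measure from Proposition~\ref{prop:1} implies ergodicity of the shift $\theta_1$ with respect to $\pr_{\mu_*}$; Birkhoff's theorem then delivers (B1) with limit $\ew_{\mu_*}[\xi_1^2]=\sigma_g^2$.

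The hard step is transferring (B1) to $\pr_{\mu}$, and this is exactly where Lemma~\ref{lem:5} plays its role. The plan is to introduce
$$\overline{Y}:=\limsup_{n\to\infty}\frac{1}{n}\sum_{k=1}^n \ew[\xi_k^2\,|\,\mathcal{F}_{k-1}],\qquad \underline{Y}:=\liminf_{n\to\infty}\frac{1}{n}\sum_{k=1}^n \ew[\xi_k^2\,|\,\mathcal{F}_{k-1}],$$
and exploit the shift-invariance $\overline{Y}\circ\theta_m=\overline{Y}$ together with \eqref{e:mar_gen} to obtain, for every $m\geq 1$,
$$\ew_x[\overline{Y}]=\ew_x[\overline{Y}\circ\theta_m]=\<\overline{\psi},\delta_x P_m\>,\qquad\overline{\psi}(y):=\ew_y[\overline{Y}].$$
Lemma~\ref{lem:5} supplies the continuity of $\overline{\psi}$ (and $\underline{\psi}$), which is what is needed to pass to the limit $m\to\infty$ via $\delta_x P_m\stackrel{w}{\to}\mu_*$ from Proposition~\ref{prop:1}; this forces $\ew_x[\overline{Y}]=\<\overline{\psi},\mu_*\>=\ew_{\mu_*}[\overline{Y}]=\sigma_g^2$ for every $x\in X$, and symmetrically $\ew_x[\underline{Y}]=\sigma_g^2$. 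Since $\underline{Y}\leq\overline{Y}$ pointwise, this pinches both to the constant $\sigma_g^2$ $\pr_x$-a.s., and integrating in $x$ against $\mu$ yields (B1) under $\pr_{\mu}$. The Heyde--Scott criterion then delivers the LIL for $\{M_n\}$ with asymptotic variance $\sigma_g^2$, and unwinding the reductions completes the proof. The single most delicate point, as anticipated in the introduction, is the continuity claim of Lemma~\ref{lem:5}, for which the strengthened $c=1$ form of \ref{cnd:a3} appears to be indispensable.
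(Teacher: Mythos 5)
Your overall architecture coincides with the paper's: discretisation along integers, the corrector decomposition $M_n=\chi_g(\Phi_n)-\chi_g(\Phi_0)+I_n(g)$, Birkhoff's theorem under $\pr_{\mu_*}$, transfer to $\pr_{\mu}$ via continuity of $x\mapsto\ew_x[\,\cdot\,]$ combined with asymptotic stability, and finally the Heyde--Scott criterion. However, the transfer step as you have written it contains a genuine gap. You set $\overline{\psi}(y):=\ew_y[\overline{Y}]$ and conclude from $\ew_x[\overline Y]=\ew_x[\underline Y]=\sigma_g^2$ together with $\underline Y\le\overline Y$ that both random variables equal the \emph{constant} $\sigma_g^2$ $\pr_x$-a.s. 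What this actually yields is only $\underline Y=\overline Y$ a.s.\ and $\ew_x[\overline Y]=\sigma_g^2$: a random variable with the right mean need not be a.s.\ constant, and under $\pr_x$ there is no ergodicity or zero--one law available to force constancy. The paper circumvents precisely this by transferring not $\ew_x[\limsup(\cdot)]$ but $\ew_x\bigl[\,\bigl|\limsup(\cdot)-d\bigr|\wedge 1\,\bigr]$ with $d=\ew_{\mu_*}[Z_1^2(g)\wedge p]$ --- this is exactly the form of the functions in Lemma~\ref{lem:5}. That quantity is nonnegative and vanishes $\mu_*$-a.e.\ by Birkhoff, so its integral against $\mu P_m$ tends to zero, which \emph{does} force the limsup to equal the constant $d$ $\pr_\mu$-a.s. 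Note also that Lemma~\ref{lem:5} asserts continuity of these absolute-deviation functionals, not of $y\mapsto\ew_y[\overline Y]$ itself, so the continuity input you invoke is not literally what the lemma supplies. Your argument becomes correct once it is routed through the functionals $\I{i}{p}{d}$, $\S{i}{p}{d}$ rather than through $\overline\psi$.

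Two smaller mismatches are worth flagging. First, your condition (B1) is stated for the conditional variances $\ew[\xi_k^2\mid\mathcal F_{k-1}]$, which is the hypothesis of Stout-type martingale LILs; Heyde--Scott's Theorem~1, as used in the paper, requires $s_n^{-2}\sum_{k\le n}Z_k^2(g)\to1$ for the \emph{raw} squared increments, and Lemma~\ref{lem:5} is built for the raw squares, truncated at level $p$ and split into even/odd indices so that the sums depend on disjoint path segments. Second, you omit the verification that $s_n^2(\mu,g)/n\to\sigma_g^2$ (condition \eqref{cnd:b2}, obtained in the paper by a truncation and uniform-integrability argument from the a.s.\ convergence); this is needed both to convert the Heyde--Scott normalisation $\sqrt{2s_n^2\ln\ln s_n^2}$ into $\sqrt{2\sigma_g^2\,n\ln\ln n}$ and to justify your reduction of the Lindeberg-type conditions to a uniform $(2+\delta)$-moment bound. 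These are routine repairs, unlike the pinching issue above.
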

Notably, Example \ref{example:1} provides a nontrivial Markov process that satisfies all the assumptions of the theorem above with each $\mu\in\mathcal{M}_{1,\zeta}(\mathbb{R})$, whenever $\zeta>2$ and $K,\gamma>0$ are chosen so that $\beta=(e^{\zeta K}-\zeta e^K)/2-\zeta\gamma+(\zeta-1)/2\leq 0$. Indeed, in view of~\eqref{e:P_tVzeta}, this choice ensures that condition~\ref{cnd:a4} is satisfied, while the remaining assumptions are already verified within the example. Although this model can be naturally generalized in several directions, as already mentioned in the introduction, a more substantial application of our result will be presented in Section~\ref{sec:6}.

\section{Proof of the Main Result} \label{sec:3}
In this section, we present a number of auxiliary results that ultimately enable us to prove Theorem \ref{thm:main}. We assume throughout that all hypotheses \ref{cnd:a1}–\ref{cnd:a4} are fulfilled and adhere to the notation introduced in the previous parts of the paper. In particular, $\mu$ stands for the initial distribution of $\Phi$ satisfying \ref{cnd:a4}, $\mu_*$ denotes the unique invariant probability measure of the semigroup~$\{P_t\}_{t\geq 0}$, and $g$ is a bounded, Lipschitz continuous function such that $\<g,\mu_*\>=0$.

It should be emphasized, however, that hypothesis \ref{cnd:a1} is not required for the lemmas presented in Subsection \ref{sec:3.1}. This condition becomes necessary in Subsection~\ref{sec:3.2}, where it is specifically needed for Lemma~\ref{lem:5} to hold and, consequently, it must also be assumed for all subsequent results (i.e., Lemmas~\ref{lem:6}–\ref{lem:9}) that rely on it. The proof of Lemma~\ref{lem:5} itself, being both lengthy and technical, is postponed to Section~\ref{sec:4}.

\subsection{Discretization and a Martingale-Based Decomposition}\label{sec:3.1}
We begin with a result that reduces the proof of Theorem \ref{thm:main} to verifying the LIL (under~$\pr_{\mu}$) for the sequence $\{I_n(g)\}_{n\in\n_0}$. A comparable approach is employed, for instance, in the proof of \hbox{\cite[Theorem 2.1]{b:komorowski_LIL}}, which focuses on some specific model.
\begin{lemma}\label{lem:1}
We have
$$\lim_{\hspace{-0.2cm}\mathbb{N}\ni\, n\to\infty}\, \sup_{t\in [n,n+1)} \left|\frac{I_t(g)}{\sqrt{t\ln\ln t}}-\frac{I_n(g)}{\sqrt{n\ln\ln n}} \right|=0\;\;\;\pr_{\mu}-\text{a.s.}$$
\end{lemma}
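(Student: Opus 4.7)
Since $g$ is bounded, the claim is in fact deterministic---for every $\omega\in\Omega$ the supremum tends to zero---so no probabilistic ingredient is actually needed. The plan is to perform the telescoping decomposition
$$\frac{I_t(g)}{\sqrt{t\ln\ln t}}-\frac{I_n(g)}{\sqrt{n\ln\ln n}}=\frac{I_t(g)-I_n(g)}{\sqrt{t\ln\ln t}}+I_n(g)\left(\frac{1}{\sqrt{t\ln\ln t}}-\frac{1}{\sqrt{n\ln\ln n}}\right),$$
and to bound each of the two summands uniformly in $t\in[n,n+1)$.

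For the first piece, the plan is to use the trivial estimate $|I_t(g)-I_n(g)|\leq(t-n)\norma{g}_{\infty}\leq\norma{g}_{\infty}$, which immediately yields a bound by $\norma{g}_{\infty}/\sqrt{n\ln\ln n}\to 0$.

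For the second piece, the idea is to combine the crude inequality $|I_n(g)|\leq n\norma{g}_{\infty}$ with a mean-value estimate for the function $\varphi(s):=(s\ln\ln s)^{-1/2}$. A direct differentiation gives
$$\varphi'(s)=-\frac{\ln\ln s+1/\ln s}{2\,(s\ln\ln s)^{3/2}},$$
so that $|\varphi'(s)|=O\bigl(s^{-3/2}(\ln\ln s)^{-1/2}\bigr)$ for large $s$. The mean-value theorem applied on $[n,n+1]$ then yields $|\varphi(t)-\varphi(n)|=O\bigl(n^{-3/2}(\ln\ln n)^{-1/2}\bigr)$ uniformly in $t\in[n,n+1)$, and multiplication by $n\norma{g}_{\infty}$ produces a bound of order $(n\ln\ln n)^{-1/2}$, which also vanishes.

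There is no genuine obstacle in this lemma; it is a purely analytic passage from continuous to discrete time, enabled by $\norma{g}_{\infty}<\infty$. The only modest care required is in the derivative estimate for $\varphi$, where one must verify that the $\ln\ln s$ and $1/\ln s$ terms do not disrupt the targeted decay rate when set against the factor $(s\ln\ln s)^{3/2}$ in the denominator.
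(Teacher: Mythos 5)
Your proof is correct, and it takes a genuinely different -- and in fact simpler -- route than the paper. The paper uses the same initial decomposition, but then bounds $|I_n(g)|$ probabilistically: it shows $\ew_{\mu}[I_n^2(g)]\leq D(\mu)\,n$ (using the mixing condition \ref{cnd:a3} via \eqref{e:P_tg(x)}), applies Markov's inequality, and concludes by Borel--Cantelli, which forces a summability requirement and hence makes the improvement from the naive $\mathcal{O}(n^2)$ bound to $\mathcal{O}(n)$ essential. You instead observe that the crude pathwise bound $|I_n(g)|\leq n\norma{g}_{\infty}$ already suffices, because the increment of $\varphi(s)=(s\ln\ln s)^{-1/2}$ over $[n,n+1)$ is $\mathcal{O}\bigl(n^{-3/2}(\ln\ln n)^{-1/2}\bigr)$ (your derivative computation is right: for $n\geq e^e$ one has $1/\ln s\leq\ln\ln s$, so $|\varphi'(s)|\leq s^{-3/2}(\ln\ln s)^{-1/2}$), and the product $n\cdot n^{-3/2}$ still vanishes. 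Since both pieces of the decomposition are then dominated by deterministic null sequences uniformly in $t\in[n,n+1)$ and in $\omega$, the convergence holds surely, not merely almost surely, and no moment estimate, mixing assumption, or Borel--Cantelli argument is needed -- only $\norma{g}_{\infty}<\infty$. The trade-off is robustness: the paper's second-moment route would survive replacing the boundedness of $g$ by integrability conditions, whereas your argument leans entirely on $\norma{g}_{\infty}<\infty$; but for the lemma as stated, with $g\in BL(X)$, your argument is complete and strictly more elementary.
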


\begin{proof}
First of all, observe that, for every $n\geq e^e$,
\begin{align*}
\frac{1}{\sqrt{n\ln\ln n}}-\frac{1}{\sqrt{(n+1)\ln\ln(n+1)}}
&\leq \frac{\sqrt{(n+1)\ln\ln(n+1)}-\sqrt{n\ln\ln n}}{n}\\
&=\frac{(n+1)\ln\ln(n+1)-n\ln\ln n }{n\left( \sqrt{(n+1)\ln\ln(n+1)}+\sqrt{n\ln\ln n}\right) }\\
&\leq \frac{(n+1)\ln\ln(n+1)-n\ln\ln n }{n^{3/2}}.
\end{align*}

Now, fix an arbitrary $\varepsilon>0$ and choose $n_0\in\n$ large enough, so that 
$$n_0\geq e^e\quad\text{and}\quad \frac{\norma{g}_{\infty}}{\sqrt{n_0\ln\ln(n_0)}}<\frac{\varepsilon}{2}.$$ 
Further, let $n\geq n_0$. Taking into account the first observation, we see that, for every \hbox{$t\in [n,n+1)$},
\begin{align*}
\left|\frac{I_t(g)}{\sqrt{t\ln\ln t}}-\frac{I_n(g)}{\sqrt{n\ln\ln n}}\right|
&\leq \frac{|I_t(g)-I_n(g)|}{\sqrt{t\ln\ln t}}+|I_n(g)|\left(\frac{1}{\sqrt{n\ln\ln n}}-\frac{1}{\sqrt{t\ln\ln t}} \right)\\
&\leq \frac{\norma{g}_{\infty}}{\sqrt{n\ln\ln n}}+|I_n(g)|\left(\frac{1}{\sqrt{n\ln\ln n}}-\frac{1}{\sqrt{(n+1)\ln\ln(n+1)}} \right)\\
&< \frac{\varepsilon}{2}+|I_n(g)|\frac{(n+1)\ln\ln(n+1)-n\ln\ln n }{n^{3/2}}.\\
\end{align*}
Consequently, applying the Markov inequality, we get
\begin{align}
\begin{split}
\label{e2:0}
\pr_{\mu}\left(\sup_{t\in[n,n+1)}\left|\frac{I_t(g)}{\sqrt{t\ln\ln t}}-\frac{I_n(g)}{\sqrt{n\ln\ln n}}\right|\geq \varepsilon \right)\hspace{-3cm}\\
&\leq\pr_{\mu}\left( |I_n(g)|\frac{(n+1)\ln\ln(n+1)-n\ln\ln n }{n^{3/2}}\geq\frac{\varepsilon}{2}\right)\\
&=\pr_{\mu}\left(|I_n(g)|\geq \frac{\varepsilon}{2}\cdot \frac{n^{3/2}}{(n+1)\ln\ln(n+1)-n\ln\ln n }\right)\\
&\leq \frac{4 \ew_{\mu} \left[I_n^2(g)\right] }{\varepsilon^2}\cdot \frac{\left[(n+1)\ln\ln(n+1)-n\ln\ln n\right]^2 }{n^3}.
\end{split}
\end{align}

We shall now estimate the expectation $\ew_{\mu} \left[I_n^2(g)\right]$. From the Markov property, it follows that, for every $x\in X$,
\begin{align}
\label{e:int_square}
\begin{split}
\ew_x[I_n^2(g)]&=2\int_0^n\int_0^s \ew_x[g(\Phi(u)) g(\Phi(s))]\,du\,ds\\
&=2\int_0^n\int_0^s \ew_x[g(\Phi(u))\ew_x\left[g(\Phi(s))\,|\,\mathcal{F}_u]\right] du\,ds\\
&=2\int_0^n \int_0^s \ew_x\left[g(\Phi(u))\ew_{\Phi(u)}[g(\Phi(s-u))]\right] du\,ds\\
&=2\int_0^n \int_0^s \ew_x\left[(g\cdot P_{s-u}g)(\Phi(u))\right]du\,ds
= 2\int_0^n \int_0^s P_u(g\cdot P_{s-u}g)(x)\,du\,ds.
\end{split}
\end{align}
Moreover, in view of \eqref{e:P_tgx}, for any $0\leq u\leq s$ we have
\begin{align*}
|P_u(g\cdot P_{s-u} g)|&\leq P_u(|g|\cdot |P_{s-u}g|)\leq \norma{g}_{\infty} P_u(|P_{s-u}g|)\\
&\leq C\norma{g}_{\infty}(\lip g)e^{-\gamma(s-u)}(P_u V+1)\leq C\norma{g}_{BL}^2e^{-\gamma(s-u)}(P_u V+1).
\end{align*}
Thus, using the Fubini theorem (together with the measurability of $(u,x)\mapsto P_u g(x)$), we can conclude that
\begin{align*}
\ew_{\mu}[I_n^2(g)]&\leq 2\int_X\left(\int_0^n \int_0^s |P_u(g\cdot P_{s-u}g)(x)|\,du\,ds
\right)\mu(dx)
\\&
\leq 2C\norma{g}_{BL}^2\int_X\left(\int_0^n \int_0^s e^{-\gamma(s-u)}(P_u V(x)+1)\,du\,ds \right)\mu(dx)\\
&=2C\norma{g}_{BL}^2\int_0^n \int_0^s e^{-\gamma(s-u)}(\<V,\mu P_u\>+1)\,du\,ds\\
&\leq 2C\norma{g}_{BL}^2(c_1(\mu)+1)\int_0^n \int_0^s e^{-\gamma(s-u)}\,du\,ds
\\&=  2C\norma{g}_{BL}^2(c_1(\mu)+1)\frac{1}{\gamma}\left(n-\frac{1-e^{-\gamma n}}{\gamma}\right)\leq D(\mu)n,
\end{align*}
where $$D(\mu):=\frac{2C\norma{g}_{BL}^2}{\gamma}(c_1(\mu)+1)<\infty,$$
due to Remark \ref{rem:2}.

Based on the estimate derived above, \eqref{e2:0} provides that
\begin{align*}
\pr_{\mu}\left(\sup_{t\in[n,n+1)}\left|\frac{I_t(g)}{\sqrt{t\ln\ln t}}-\frac{I_n(g)}{\sqrt{n\ln\ln n}}\right|\geq \varepsilon \right)&\leq \frac{4 D(\mu) }{\varepsilon^2}\cdot \frac{\left[(n+1)\ln\ln(n+1)-n\ln\ln n\right]^2 }{n^2}.
\end{align*}

In view of the Borel-Cantelli Lemma, it now suffices to show that
$$\sum_{n=n_0}^{\infty}\left(\frac{(n+1)\ln\ln(n+1)-n\ln\ln n}{n}\right)^2<\infty.$$
To this end, observe that
\begin{align*}
[(n+1)\ln\ln(n+1)-n\ln\ln n]^2&= [n(\ln\ln(n+1)-\ln\ln n)+\ln\ln(n+1)]^2\\
&\leq 2[n(\ln\ln(n+1)-\ln\ln n)]^2+2\ln^2\ln(n+1)\\
&\leq 2 \left[n\ln\left(\frac{\ln(n+1)}{\ln n}\right)\right]^2+2\ln^2 n\quad\text{for}\quad n\geq n_0.
\end{align*}
Additionally, we have
$$\lim_{n\to\infty} n\ln\left(\frac{\ln(n+1)}{\ln n}\right)=0,$$
since, for every $n\geq n_0$,
$$1\leq \frac{\ln(n+1)}{\ln n}=\frac{\ln(n+1)-\ln n}{\ln n}+1=\frac{\ln\left(1+1/n\right)}{\ln n}+1\leq \frac{1}{n\ln n}+1,$$
which gives
$$0\leq n\ln\left(\frac{\ln(n+1)}{\ln n}\right)\leq n \ln\left(\frac{1}{n\ln n}+1 \right)\leq n\cdot\frac{1}{n\ln n}=\frac{1}{\ln n}.$$
Hence, in particular, there exists $M\geq 0$ such that
$$0\leq n\ln\left(\frac{\ln(n+1)}{\ln n}\right) \leq M \quad \text{for all}\quad n\geq n_0,$$
and, therefore,
$$[(n+1)\ln\ln(n+1)-n\ln\ln n]^2\leq 2M^2+2\ln^2(n)\leq 2(M^2+1)\ln^2 n\quad\text{for all}\quad n\geq n_0.$$
Finally, it follows that
$$0\leq \sum_{n=n_0}^{\infty}\left(\frac{(n+1)\ln\ln(n+1)-n\ln\ln n }{n}\right)^2\leq 2(M^2+1)\sum_{n=n_0}^{\infty} \frac{\ln^2 n}{n^2}<\infty,$$
where the series on the right-hand side converges by Cauchy's condensation test and the root test, as
$$\sum_{n=n_0}^{\infty} \frac{2^n \ln^2(2^n)}{(2^n)^2}=\ln^2(2)\sum_{n=n_0}^{\infty} \frac{n^2}{2^n}.$$
\end{proof}

\begin{remark}
Note that the key idea underlying the proof of Lemma \ref{lem:1} is that, owing to~\eqref{e:P_tgx} and our estimation approach, we have been able to improve the asymptotic order of~$\ew_{\mu}[I_n^2(g)]$ as $n \to \infty$ from $\mathcal{O}(n^2)$ under a `naive' estimation to $\mathcal{O}(n)$.
\end{remark}

Let us now introduce the processes
\begin{equation}
\label{def:martingale}
M_t(g):=\chi_g(\Phi_t)-\chi_g(\Phi_0)+I_t(g)\quad\text{for}\quad t\geq 0,
\end{equation}
\begin{equation}
\label{def:rest}
R_t(g):=\frac{\chi_g(\Phi_0)-\chi_g(\Phi_t)}{\sqrt{2 t \ln\ln t}}\quad\text{for}\quad t>e.
\end{equation}
Then
\begin{equation}
\label{e:decomposition}
\frac{I_t(g)}{\sqrt{2t \ln\ln t}}=\frac{M_t(g)}{\sqrt{2t \ln\ln t}}+R_t(g)\quad \text{for}\quad t>e,
\end{equation}
and \eqref{def:sigma} can be rewritten as
\begin{equation}
\label{e:sigma_short}
\sigma_g^2=\ew_{\mu_*}[M_1(g)^2].
\end{equation}

\begin{remark}\label{rem:4} There exists a constant $A_{\zeta}\geq 0$ such that
$$\sup_{t\geq 0} \ew_{\nu}\left[|\chi_g(\Phi_t)|^{\zeta}\right]  \leq A_{\zeta}(c_{\zeta}(\nu)+1)<\infty\quad \text{for}\quad \nu\in\{\mu,\,\mu_* \}.$$
Indeed, using \eqref{e:P_tgx} and the fact that 
\begin{equation}
\label{e:useful_ineq}
|\alpha+\beta|^r\leq (2^{r-1}\vee 1)(|\alpha|^r+|\beta|^r)\quad\text{for any}\quad \alpha,\beta\in\mathbb{R},\;r>0,
\end{equation}
we get
\begin{align*}
\ew_{\nu}\left[|\chi_g(\Phi_t)|^{\zeta}\right]&\leq \int_X\left(\int_0^{\infty} |P_s g(x)|\,ds \right)^{\zeta}\nu P_t(dx)\\
&\leq \int_X \left(C(\lip g) (V(x)+1)\int_0^{\infty} e^{-\gamma s}\,ds \right)^{\zeta}\nu P_t(dx)\\
&=\left(\frac{C\lip g}{\gamma} \right)^{\zeta}\<(V+1)^{\zeta},\nu P_t\>\leq A_{\zeta}\left(\<V^{\zeta},\nu P_t\>+1\right)\leq A_{\zeta}(c_{\zeta}(\nu)+1),
\end{align*}
where $A_{\zeta}:=2^{\zeta-1}(C(\lip g)/\gamma)^{\zeta}$. Of course, $c_{\zeta}(\mu)$ and $c_{\zeta}(\mu_*)$ are finite by condition \ref{cnd:a4} and the fact that $\mu_*$ is a $\{P_t\}_{t\geq 0}$-invariant measure, satisfying $\<V^{\zeta},\mu_*\><\infty$.
\end{remark}

\begin{lemma}\label{lem:3}
For $\nu\in\{\mu,\mu_*\}$ and each $t\geq 0$, we have $\ew_{\nu}\left[|M_t(g)|^{\zeta}\right]<\infty$ and, in particular, the constant $\sigma_g^2$, defined by \eqref{def:sigma}, is finite.
\end{lemma}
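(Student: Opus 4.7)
The plan is to decompose $M_t(g)$ into its three constituents from \eqref{def:martingale} and bound each one separately in $\mathcal{L}^{\zeta}(\pr_{\nu})$. Applying the elementary inequality \eqref{e:useful_ineq} twice (first to split off $I_t(g)$, then to split the two corrector terms), I would obtain
$$|M_t(g)|^{\zeta}\leq C_{\zeta}\bigl(|\chi_g(\Phi_t)|^{\zeta}+|\chi_g(\Phi_0)|^{\zeta}+|I_t(g)|^{\zeta}\bigr)$$
with $C_{\zeta}:=(2^{\zeta-1}\vee 1)^2$, so the task reduces to estimating the $\pr_{\nu}$-expectation of each summand.

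For the two corrector terms, Remark \ref{rem:4} already delivers the bound $\sup_{s\geq 0}\ew_{\nu}[|\chi_g(\Phi_s)|^{\zeta}]\leq A_{\zeta}(c_{\zeta}(\nu)+1)$ for both $\nu=\mu$ (directly by condition \ref{cnd:a4}) and $\nu=\mu_*$ (since $\mu_*\in\mathcal{M}_{1,\zeta}(X)$ by Proposition \ref{prop:1}, and invariance gives $c_{\zeta}(\mu_*)=\<V^{\zeta},\mu_*\><\infty$). For the integral term, the boundedness of $g$ yields the deterministic pointwise estimate $|I_t(g)|\leq t\norma{g}_{\infty}$, and hence $\ew_{\nu}[|I_t(g)|^{\zeta}]\leq t^{\zeta}\norma{g}_{\infty}^{\zeta}$. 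Summing these three bounds shows that $\ew_{\nu}[|M_t(g)|^{\zeta}]<\infty$ for every $t\geq 0$ and $\nu\in\{\mu,\mu_*\}$.

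For the finiteness of $\sigma_g^2$, I would invoke the equivalent formulation \eqref{e:sigma_short} and use that $\zeta>2$ by \ref{cnd:a4}. Jensen's inequality applied with the convex function $x\mapsto x^{\zeta/2}$ (equivalently, Hölder with conjugate exponents $\zeta/2$ and $\zeta/(\zeta-2)$) yields
$$\sigma_g^2=\ew_{\mu_*}[M_1(g)^2]\leq \bigl(\ew_{\mu_*}[|M_1(g)|^{\zeta}]\bigr)^{2/\zeta}<\infty,$$
completing the argument. No serious obstacle is anticipated: the proof is a direct assembly of Remark \ref{rem:4}, the boundedness of $g$, and a convexity inequality. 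The only point requiring attention is to verify that Remark \ref{rem:4} applies to $\nu=\mu_*$, which is immediate from the invariance of $\mu_*$ and $\<V^{\zeta},\mu_*\><\infty$ established in Proposition \ref{prop:1}.
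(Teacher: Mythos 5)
Your proposal is correct and follows essentially the same route as the paper: the same application of \eqref{e:useful_ineq} to reduce to Remark \ref{rem:4} plus the trivial bound $|I_t(g)|\leq t\norma{g}_{\infty}$. The only difference is that you spell out the final step for $\sigma_g^2$ (via Jensen/H\"older with $\zeta>2$), which the paper leaves implicit.
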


\begin{proof}
Let $t\geq 0$. From \eqref{e:useful_ineq}, it follows that
$$|M_t(g)|^{\zeta}\leq 4^{\zeta-1}\left(|\chi_g(\Phi_t)|^{\zeta}+ |\chi_g(\Phi_0)|^{\zeta}\right)+2^{\zeta-1}(\norma{g}_{\infty}t)^{\zeta}.$$
Thus, in view of Remark \ref{rem:4}, there exists $A_{\zeta}\in [0,\infty)$ such that
$$\ew_{\nu}\left[|M_t(g)|^{\zeta}\right]\leq 2\cdot 4^{\zeta-1} A_{\zeta}\left(c_{\zeta}(\nu)+1\right)+2^{\zeta-1}(\norma{g}_{\infty}t)^{\zeta}<\infty,$$
which completes the proof.
\end{proof}

\begin{lemma}\label{lem:2}
The sequence $\{R_n(g)\}_{n\geq 3}$, determined by \eqref{def:rest}, converges to zero $\pr_{\mu}$-a.s.
\end{lemma}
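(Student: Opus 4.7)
The plan is to apply a moment-based Borel--Cantelli argument, exploiting the fact that Remark~\ref{rem:4} gives a \emph{uniform-in-$n$} bound on the $\zeta$-th moments of $\chi_g(\Phi_n)$, together with the crucial assumption $\zeta>2$ from \ref{cnd:a4}.

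First, I would fix $\varepsilon>0$ and apply Markov's inequality with exponent $\zeta$:
\begin{equation*}
\pr_{\mu}\!\left(|R_n(g)|\ge \varepsilon\right)
\;\le\;\frac{\ew_{\mu}\!\left[|\chi_g(\Phi_0)-\chi_g(\Phi_n)|^{\zeta}\right]}{\varepsilon^{\zeta}\,(2n\ln\ln n)^{\zeta/2}}.
\end{equation*}
The numerator is bounded uniformly in $n$: using the elementary inequality \eqref{e:useful_ineq} together with Remark~\ref{rem:4} (which gives $\sup_{t\ge 0}\ew_{\mu}[|\chi_g(\Phi_t)|^{\zeta}]\le A_{\zeta}(c_{\zeta}(\mu)+1)<\infty$), we obtain
\begin{equation*}
\ew_{\mu}\!\left[|\chi_g(\Phi_0)-\chi_g(\Phi_n)|^{\zeta}\right]
\;\le\;2^{\zeta-1}\bigl(\ew_{\mu}[|\chi_g(\Phi_0)|^{\zeta}]+\ew_{\mu}[|\chi_g(\Phi_n)|^{\zeta}]\bigr)
\;\le\;K
\end{equation*}
for some finite constant $K$ independent of $n$.

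Next, since $\zeta>2$, we have $\zeta/2>1$, so
\begin{equation*}
\sum_{n\ge 3}\pr_{\mu}\!\left(|R_n(g)|\ge \varepsilon\right)
\;\le\;\frac{K}{\varepsilon^{\zeta}\,2^{\zeta/2}}\sum_{n\ge 3}\frac{1}{n^{\zeta/2}(\ln\ln n)^{\zeta/2}}\;<\;\infty.
\end{equation*}
The Borel--Cantelli lemma then yields $\pr_{\mu}(|R_n(g)|\ge \varepsilon\text{ i.o.})=0$, and since $\varepsilon>0$ was arbitrary, taking a countable sequence $\varepsilon_k\downarrow 0$ gives $R_n(g)\to 0$ $\pr_{\mu}$-a.s.

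There is no significant obstacle here; the argument is essentially routine once Remark~\ref{rem:4} is in hand. The only point to be slightly careful about is ensuring the uniform moment bound applies both at $t=0$ (using $\mu\in\mathcal{M}_{1,\zeta}(X)$ from \ref{cnd:a4}) and at $t=n$ (using invariance under $\{P_t\}_{t\ge 0}$ of the $\zeta$-th moment bound), both of which are already covered by Remark~\ref{rem:4}. The whole force of the argument rests on having strictly better than second moments available, which is precisely why \ref{cnd:a4} imposes $\zeta>2$ rather than $\zeta=2$.
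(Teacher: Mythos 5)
Your proof is correct and follows essentially the same route as the paper's: Markov's inequality with exponent $\zeta$, the uniform $\zeta$-th moment bound on $\chi_g(\Phi_n)$ from Remark~\ref{rem:4} via \eqref{e:useful_ineq}, convergence of $\sum_n (n\ln\ln n)^{-\zeta/2}$ because $\zeta/2>1$, and Borel--Cantelli. The only cosmetic difference is your closing remark attributing the bound at $t=n$ to ``invariance''; it actually comes from the supremum over $t$ in condition \ref{cnd:a4}, but Remark~\ref{rem:4} covers this either way.
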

\begin{proof}
Fix an arbitrary $\varepsilon>0$ and let $n\geq 3$. The Markov inequality implies that
$$
\pr_{\mu}(|R_n(g)|\geq \varepsilon)=\pr_{\mu}(|\chi_g(\Phi_0)-\chi_g(\Phi_n)|\geq\varepsilon\sqrt{2n\ln\ln n})\leq \frac{\ew_{\mu}[|\chi_g(\Phi_0)-\chi_g(\Phi_n)|^{\zeta} ]}{\varepsilon^{\zeta}(2n\ln\ln n)^{\zeta/2}}.
$$
On the other hand, applying \eqref{e:useful_ineq} and Remark \ref{rem:4}, we obtain
$$\ew_{\mu}\left[|\chi_g(\Phi_0)-\chi_g(\Phi_n)|^{\zeta}\right]\leq 2^{\zeta-1}
\left(\ew_{\mu}\left[|\chi_g(\Phi_0)|^{\zeta}\right]+\ew_{\mu}\left[|\chi_g(\Phi_n)|^{\zeta}\right] \right)\leq 2^{\zeta}A_{\zeta}(c_{\zeta}(\mu)+1)$$
for some $A_{\zeta}\in [0,\infty)$. Consequently,
$$
\sum_{n=3}^{\infty}\pr_{\mu}(|R_n(g)|\geq \varepsilon)\leq \frac{2^{\zeta/2}A_{\zeta}(c_{\zeta}(\mu)+1)}{\varepsilon^{\zeta}}\sum_{n=3}^{\infty}\frac{1}{(n\ln\ln n)^{\zeta/2}},
$$
and the series on the right-hand side is convergent, since $\zeta/2>1$. The claim now easily follows from the Borel-Cantelli Lemma.
\end{proof}

In view of \eqref{e:decomposition}, Lemmas \ref{lem:1} and \ref{lem:2} demonstrate that, in order to prove Theorem \ref{thm:main}, it is enough to establish the LIL (under $\pr_{\mu}$) for the sequence $\{M_n(g)\}_{n\in\n_0}$, which means that
\begin{equation}
\label{e:LIL_mart}
\liminf_{n\to\infty} \frac{M_n(g)}{\sqrt{2\sigma_g^2\, n\ln\ln n}}=-1\quad\text{and}\quad \limsup_{n\to\infty} \frac{M_n(g)}{\sqrt{2\sigma_g^2\, n\ln\ln n}}=1\;\;\pr_{\mu}-\text{a.s.},
\end{equation}
whenever $\sigma_g^2>0$ (finiteness of $\sigma_g^2$ is ensured by Lemma \ref{lem:3}).

Furthermore, proceeding similarly as in \cite[Lemma 4.3]{b:czapla-CLT-cont} (cf. also \cite[Proposition~5.2]{b:komorowski}) one can show that the process $\{M_t(g)\}_{t\geq 0}$, given by \eqref{def:martingale}, is a martingale on $(\Omega,\mathcal{F},\pr_{\mu})$ with respect to the filtration $\{\mathcal{F}_t\}_{t\geq 0}$ and, additionally, from Lemma~\ref{lem:3} we know that $M_t(g)\in\mathcal{L}^{\zeta}(\pr_{\mu})$ for $t\geq 0$. Therefore, in particular, the sequence $\{M_n(g)\}_{n\in\n_0}$ is a~martingale in $\mathcal{L}^{\zeta}(\pr_{\mu})$ with respect to $\{\mathcal{F}_n\}_{n\in\n_0}$.

\subsection {Proof of the LIL for the Associated Martingale}\label{sec:3.2}

Let $\{Z_n(g)\}_{n\in\n}$ denote the sequence of the increments of $\{M_n(g)\}_{n\in\n}$, i.e.,
$$Z_n(g):=M_n(g)-M_{n-1}(g)=\chi_g(\Phi_n)-\chi_g(\Phi_{n-1})+\int_{n-1}^n g(\Phi_s)\,ds,\quad\text{for}\quad n\in\n,$$
and define
$$s_n^2(\mu,g):=\ew_{\mu}\left[M_n^2(g)\right]=\sum_{k=1}^n\ew_{\mu}\left[Z_k^2(g)\right]\quad\text{for}\quad n\in\n.$$
Clearly, by Lemma \ref{lem:3}, $s_n^2(\mu,g)<\infty$ for all $n\in\n$.

To prove that the martingale $\{M_n(g)\}_{n\in\n_0}$ satisfies the LIL (under $\mathbb{P}_{\mu}$), we will rely on \cite[Theorem 1]{b:heyde_scott}, as explained in the remark below.

\begin{remark}\label{rem:5}
Assuming that $\sigma_g^2>0$, to show \eqref{e:LIL_mart}, it suffices to verify the following conditions:
\begin{equation}
\label{cnd:b1}
\lim_{n\to\infty}\frac{1}{n}\sum_{k=1}^n Z_k^2(g)=\sigma_g^2\;\;\;\pr_{\mu}-\text{a.s.};
\end{equation}
\begin{equation}
\label{cnd:b2}
\lim_{n\to\infty} \frac{s_n^2(\mu,g)}{n}=\sigma_g^2;
\end{equation}
\begin{equation}
\label{cnd:b3}
\sum_{n=\bar{n}}^{\infty} s_n^{-4}(\mu,g)\,\ew_{\mu}\left[Z_n(g)^4 \mathbbm{1}_{\left\{|Z_n(g)|<\delta s_n(\mu,g)\right\} } \right]<\infty \quad\text{for some}\quad \delta>0;
\end{equation}
\begin{equation}
\label{cnd:b4}
\sum_{n=\bar{n}}^{\infty} s_n^{-1}(\mu,g)\,\ew_{\mu}\left[|Z_n(g)|\mathbbm{1}_{\left\{|Z_n(g)|\geq \varepsilon s_n(\mu,g)\right\} } \right]<\infty \quad\text{for all}\quad \varepsilon>0,
\end{equation}
where $\bar{n}$ is large enough so that $s_n^2(\mu,g)>0$ for all $n\geq \bar{n}$. 

Indeed, \eqref{cnd:b2} clearly yields that 
\begin{equation}
\label{cnd:b5}
\lim_{n\to\infty} s_n^2(\mu,g)=\infty,
\end{equation}
and, combined with \eqref{cnd:b1}, it gives
\begin{equation}
\label{cnd:b6}
\lim_{n\to\infty} \frac{1}{s_n^2(\mu,g)}\sum_{k=1}^n Z_k^2(g)=\lim_{n\to\infty} \frac{n}{s_n^2(\mu,g)}\cdot\frac{1}{n} \sum_{k=1}^n Z_k^2(g)=1\;\;\;\pr_{\mu}-\text{a.s}.
\end{equation}
Conditions \eqref{cnd:b3}-\eqref{cnd:b6} coincide with the hypotheses of \cite[Theorem 1]{b:heyde_scott}, which implies, in~particular, that 
$$
\liminf_{n\to\infty} \frac{M_n(g)}{\sqrt{2 s_n^2(\mu,g) \ln\ln s_n^2(\mu,g)}}=-1,\quad \limsup_{n\to\infty} \frac{M_n(g)}{\sqrt{2 s_n^2(\mu,g) \ln\ln s_n^2(\mu,g)}}=1.\;\;\pr_{\mu}-\text{a.s.}
$$
Consequently, as
$$\frac{M_n(g)}{\sqrt{2\sigma_g^2\, n\ln\ln n}}=\frac{M_n(g)}{\sqrt{2 s_n^2(\mu,g) \ln\ln s_n^2(\mu,g)}}\cdot \sqrt{\frac{s_n^2(\mu,g)}{\sigma_g^2 n}}\cdot\sqrt{\frac{\ln\ln s_n^2(\mu,g)}{\ln\ln n}}\quad\text{for any}\quad n>e,$$
it suffices to observe that
$$\lim_{n\to\infty}\frac{\ln\ln s_n^2(\mu,g)}{\ln\ln n}=1.$$
This, however, follows immediately from \eqref{cnd:b2} and the fact that $0<\sigma_g^2<\infty$. Indeed, letting \hbox{$\alpha_n:=s_n^2(\mu,g)/n$} for $n\in\n$, we obtain
\begin{align*}
\frac{\ln\ln s_n^2(\mu,g)}{\ln\ln n}-1&=\frac{\ln\ln (n\,\alpha_n)-\ln\ln n}{\ln\ln n}=\frac{1}{\ln\ln n}\ln\left(\frac{\ln (n\,\alpha_n)}{\ln n}\right)\\
&=\ln\left(\left(1+\frac{\ln\alpha_n}{\ln n}\right)^{1/\ln\ln n} \right)  \quad\text{for all}\quad n>e,
\end{align*}
which converges to $0$ as $n\to \infty$, since $\lim_{n\to\infty}\alpha_n= \sigma_g^2$.
\end{remark}

In the lemmas that follow, we will show that conditions \eqref{cnd:b1}–\eqref{cnd:b4} are fulfilled.

\begin{lemma}\label{lem:4}
For any $p\in\n\cup\{\infty\}$ and $i\in\{0,1\}$, we have
$$\lim_{n\to\infty} \frac{1}{n}\sum_{k=1}^n\left(Z_{2k+i}^2(g)\wedge p\right)=\ew_{\mu_*}\left[Z_1^2(g)\wedge p\right]\;\;\;\pr_{\mu_*}-\text{a.s.}$$
\end{lemma}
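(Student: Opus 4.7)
The plan is to reduce the assertion to Birkhoff's ergodic theorem applied to the shift $\theta_2$ under the stationary law $\pr_{\mu_*}$. First, the shift property~\eqref{m:2} gives $Z_n(g)=Z_1(g)\circ\theta_{n-1}$ for every $n\in\n$, since $\Phi_s\circ\theta_{n-1}=\Phi_{n-1+s}$ (so the boundary difference $\chi_g(\Phi_1)-\chi_g(\Phi_0)$ becomes $\chi_g(\Phi_n)-\chi_g(\Phi_{n-1})$), and a change of variables yields $\int_0^1 g(\Phi_s\circ\theta_{n-1})\,ds=\int_{n-1}^n g(\Phi_u)\,du$. Setting $W:=Z_1^2(g)\wedge p$, this identity yields $Z_{2k+i}^2(g)\wedge p=W\circ\theta_{2k+i-1}$. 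Since $\mu_*$ is invariant, $\theta_t$ preserves $\pr_{\mu_*}$ for every $t\ge 0$, and $W\in\mathcal{L}^1(\pr_{\mu_*})$: this is trivial for finite $p$, while for $p=\infty$ one has $Z_1(g)=M_1(g)$ (as $M_0(g)=0$) and, by Lemma~\ref{lem:3} together with~\eqref{e:sigma_short}, $\ew_{\mu_*}[Z_1^2(g)]=\sigma_g^2<\infty$.

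The heart of the argument is establishing that $\theta_2$ is ergodic on $(\Omega,\mathcal{F},\pr_{\mu_*})$. Asymptotic stability (Proposition~\ref{prop:1}) yields $\nu P_{2n}\stackrel{w}{\to}\mu_*$ for every $\nu\in\mathcal{M}_1(X)$, so $\mu_*$ is the unique invariant probability measure for the discrete-time kernel $P_2$. Moreover, the Feller property~\ref{cnd:a2} gives $P_{2n}h\in BC(X)$ and $P_{2n}h(x)\to\<h,\mu_*\>$ pointwise for any $h\in BC(X)$; bounded convergence then shows that, for any $f,h\in BC(X)$,
$$\ew_{\mu_*}\bigl[f(\Phi_0)\,h(\Phi_{2n})\bigr]=\<f\cdot P_{2n}h,\mu_*\>\;\longrightarrow\;\<f,\mu_*\>\<h,\mu_*\>=\ew_{\mu_*}[f(\Phi_0)]\,\ew_{\mu_*}[h(\Phi_0)].$$
Iterating the Markov property~\eqref{e:Ex} extends this to bounded cylindrical functionals $F=F(\Phi_{t_1},\dots,\Phi_{t_k})$ and $G=G(\Phi_{s_1},\dots,\Phi_{s_m})$, giving $\ew_{\mu_*}[F\cdot G\circ\theta_{2n}]\to\ew_{\mu_*}[F]\ew_{\mu_*}[G]$, and a monotone class / $\mathcal{L}^2$-density argument upgrades the identity to all $F,G\in\mathcal{L}^2(\pr_{\mu_*})$. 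Thus $\theta_2$ is strongly mixing, hence ergodic, under $\pr_{\mu_*}$.

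Finally, using the semigroup relation $\theta_{2k+i-1}=\theta_{i+1}\circ\theta_2^{k-1}$ (valid on $\mathcal{F}_\infty^\Phi$ by~\eqref{m:2}), Birkhoff's ergodic theorem applied to the ergodic $\pr_{\mu_*}$-preserving map $\theta_2$ and the integrable function $W\circ\theta_{i+1}$ yields
$$\frac{1}{n}\sum_{k=1}^n\bigl(Z_{2k+i}^2(g)\wedge p\bigr)=\frac{1}{n}\sum_{k=1}^n (W\circ\theta_{i+1})\circ\theta_2^{k-1}\;\xrightarrow[n\to\infty]{}\;\ew_{\mu_*}[W\circ\theta_{i+1}]=\ew_{\mu_*}\bigl[Z_1^2(g)\wedge p\bigr]$$
$\pr_{\mu_*}$-almost surely, where the last equality follows from stationarity. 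The principal obstacle is the verification of ergodicity of $\theta_2$: Proposition~\ref{prop:1} supplies the one-dimensional mixing input, but promoting it to the full path $\sigma$-algebra $\mathcal{F}_\infty^\Phi$ requires a careful combination of the Markov property with an approximation/density argument in $\mathcal{L}^2(\pr_{\mu_*})$.
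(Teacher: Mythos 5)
Your proposal is correct and follows essentially the same route as the paper: identify $Z_{2k+i}^2(g)\wedge p$ as $(Z_{2+i}^2(g)\wedge p)\circ\theta_2^{k-1}$, apply Birkhoff's ergodic theorem to the $\pr_{\mu_*}$-preserving shift $\theta_2$, and reconcile the limiting expectation with $\ew_{\mu_*}[Z_1^2(g)\wedge p]$ via the Markov property and stationarity. The only divergence is in how ergodicity of $\theta_2$ is discharged: the paper simply invokes the standard fact that uniqueness of the invariant measure renders every shift $\theta_t$ ergodic under $\pr_{\mu_*}$, whereas you rebuild this from scratch by proving strong mixing on $\mathcal{F}_\infty^{\Phi}$ from asymptotic stability (via decorrelation of cylindrical functionals and an $\mathcal{L}^2$-density argument) -- more work, but self-contained and equally valid.
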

\begin{proof}
Since $\mu_*$ is the unique stationary distribution of $\Phi$, it follows that, for every $t>0$, the shift $\theta_t$ is an $\mathcal{F}/\mathcal{F}$-measurable transformation preserving the measure $\pr_{\mu_*}$ (i.e.,~$\pr_{\mu_*}(\theta_t^{-1}(F))=\pr_{\mu_*}(F)$ for every $F\in\mathcal{F}$), and $\pr_{\mu_*}$ is ergodic with respect to $\theta_t$ (i.e.,~$\pr_{\mu_*}(F)\in\{0,1\}$ for any $F\in\mathcal{F}$ such that $\theta_t^{-1}(F)=F$). Consequently, the Birkhoff individual ergodic theorem implies (see, e.g., \cite[Theorem 7.19]{b:douc}) that for every $t>0$ and any $Y\in\mathcal{L}^2(\pr_{\mu_*})$,
\begin{equation}
\label{e:birkhoff}
\lim_{n\to\infty}\frac{1}{n}\sum_{k=0}^{n-1} Y\circ\theta_t^k=\ew_{\mu_*}[Y]\;\;\;\pr_{\mu_*}-\text{a.s.}
\end{equation}

Now, let $p\in\n\cup\{\infty\}$, $i\in\{0,1\}$ and note that, due to \eqref{m:2}, for every $k\in\n$,
$$Z_{2k+i}^2(g)\wedge p=(Z_{2+i}^2(g)\wedge p)\circ \theta_{2(k-1)}=(Z_{2+i}^2(g)\wedge p)\circ \theta_2^{k-1}.$$
Therefore, applying \eqref{e:birkhoff} with $t=2$ to the random variables $Z_2^2(g)$ and $Z_3^2(g)$ (which belong to $\mathcal{L}^2(\pr_{\mu_*})$ by Lemma \ref{lem:3}) gives
$$\lim_{n\to\infty}\frac{1}{n}\sum_{k=1}^n \left(Z_{2k+i}^2(g) \wedge p\right)=\lim_{n\to\infty}\frac{1}{n}\sum_{k=0}^{n-1} \left(Z_{2+i}^2(g) \wedge p\right)\circ\theta_2^k=\ew_{\mu_*}\left[Z_{2+i}^2(g)\wedge p\right].$$
Finally, referring to the Markov property and the invariance of $\mu_*$ we obtain
\begin{align*}
\ew_{\mu_*}\left[Z_{2+i}^2(g)\wedge p\right]&=\ew_{\mu_*}\left[\ew_{\mu_*}\left[Z_{2+i}^2(g)\wedge p\,|\,\mathcal{F}_{1+i}\right]\right]=\ew_{\mu_*}\left[\ew_{\Phi_{1+i}}\left[Z_1^2(g)\wedge p\right]\right]\\
&=\int_X \ew_x\left[Z_1^2(g)\wedge p \right](\mu_* P_{1+i})(dx)=\ew_{\mu_*}\left[Z_1^2(g)\wedge p \right],
\end{align*}
which completes the proof.
\end{proof}

The next result is crucial to our approach, as it enables transferring the statement of Lemma \ref{lem:4} to the non-stationary case (involving $\mu$ as the initial distribution of $\Phi$). This, in turn, is established in the subsequent Lemma \ref{lem:6}.

\begin{lemma}\label{lem:5}
For any $p\in\n\cup\{\infty\}$, any $d\in [0,\infty)$, and each $i\in\{0,1\}$, the functions  \hbox{$\I{i}{p}{d}, \S{i}{p}{d}:X\to [0,1]$} given by
\begin{gather}
\begin{split}
\label{def:IS}
\I{i}{p}{d}(x):=\ew_x\left[\left|\liminf_{n\to\infty}\left(\left(\frac{1}{n}\sum_{k=1}^n\left(Z_{2k+i}^2(g) \wedge p \right)\right) \right)-d \right|\wedge 1 \right],\\
\S{i}{p}{d}(x):=\ew_x\left[\left|\limsup_{n\to\infty}\left(\left(\frac{1}{n}\sum_{k=1}^n\left(Z_{2k+i}^2(g) \wedge p \right)\right) \right)-d\right|\wedge 1 \right]
\end{split}
\end{gather}
are continuous.
\end{lemma}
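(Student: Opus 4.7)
Since $\S{i}{p}{d}$ and $\I{i}{p}{d}$ differ only by replacement of $\liminf$ by $\limsup$, it is enough to establish continuity of $\I{i}{p}{d}$. Abbreviating $A_n:=\frac{1}{n}\sum_{k=1}^n(Z_{2k+i}^2(g)\wedge p)$, we have $\I{i}{p}{d}(x)=\ew_x[|\liminf_n A_n-d|\wedge 1]$. My strategy is to exhibit $\I{i}{p}{d}$ as a locally uniform limit of continuous functions, obtained by successively truncating the infinite-horizon $\liminf$ to a finite window and using the Wasserstein contraction~\ref{cnd:a3} to control each approximation quantitatively.

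For $N\leq L$, set $J_{N,L}(x):=\ew_x[|\min_{N\leq n\leq L}A_n-d|\wedge 1]$. The inner random variable is bounded by $1$ and $\mathcal{F}_{2L+i}^{\Phi}$-measurable, being built from finitely many point evaluations of the Lipschitz corrector $\chi_g$ (Remark~\ref{rem:lip_chi}) and unit-length Bochner integrals of the bounded Lipschitz function $g$. Iterating the Markov property at integer times and combining Lemma~\ref{lem-c:4} (the pairing $|\<h,\mu-\nu\>|\leq(\lip h)\,d_W(\mu,\nu)$) with Lemma~\ref{lem-c:8} (the continuous-time $\mathcal{L}^2$-estimate for path-integral increments), I expect to derive a Lipschitz-type modulus of the form $|J_{N,L}(x)-J_{N,L}(y)|\leq C_{N,L}(1+V(x)+V(y))\rho(x,y)$, in which $C_{N,L}$ remains bounded in $L$ thanks to the prefactor $c=1$ in~\ref{cnd:a3} preventing any compounding amplification through iterated use of the contraction. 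In particular each $J_{N,L}$ is continuous on $X$.

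Since $\min_{N\leq n\leq L}A_n\downarrow\inf_{n\geq N}A_n=:Y_N$ as $L\to\infty$, and $Y_N\uparrow\liminf_n A_n$ as $N\to\infty$, bounded convergence (together with the uniform moment bound from~\ref{cnd:a4} when $p=\infty$) yields $\lim_{N\to\infty}\lim_{L\to\infty}J_{N,L}(x)=\I{i}{p}{d}(x)$ pointwise in $x$. To upgrade this to locally uniform convergence on $X$, I would argue that, for fixed $N$, the inner limit $L\to\infty$ is monotone with a continuous limit, so a Dini-type argument gives uniform convergence on compact subsets; for the outer limit $N\to\infty$, the tail $\ew_x[|Y_N-\liminf_n A_n|]$ is dominated by $e^{-\gamma N}$ times a polynomial in $V(x)$ via~\ref{cnd:a4} and the exponential mixing, hence is uniformly small on each ball in $X$. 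Continuity of $\I{i}{p}{d}$ then follows from its being a locally uniform limit of continuous functions.

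The chief obstacle is the Lipschitz bound in the first step: its constant $C_{N,L}$ must not blow up as $L\to\infty$, which rules out any generic contraction factor $c>1$ in~\ref{cnd:a3} (the iteration would produce terms of order $c^L$ that the subsequent tail estimate could not absorb). The hypothesis $c=1$ is precisely what permits Lemmas~\ref{lem-c:4} and~\ref{lem-c:8} to supply the uniform-in-$L$ continuity estimate; this constitutes the continuous-time counterpart of the discrete-chain argument of \cite{b:bolt_majewski} and, as noted in the introduction, is the principal technical contribution of the paper.
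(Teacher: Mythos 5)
Your overall architecture (finite-window truncations $J_{N,L}$ controlled via the Wasserstein contraction, followed by a limit passage) matches the paper's strategy in spirit, and you correctly identify why $c=1$ in \ref{cnd:a3} is essential. There are, however, three genuine gaps. First, the upgrade to locally uniform convergence does not work as described: for fixed $N$ the random variables $\min_{N\le n\le L}A_n$ are monotone in $L$, but after composing with the non-monotone map $m\mapsto |m-d|\wedge 1$ the expectations $J_{N,L}(x)$ need not be monotone in $L$, so Dini's theorem is inapplicable; moreover Dini presupposes continuity of the limit function, which is precisely what is being proved. Second, your outer tail estimate --- that $\ew_x\bigl[|\inf_{n\ge N}A_n-\liminf_{n}A_n|\bigr]$ is of order $e^{-\gamma N}$ times a polynomial in $V(x)$ --- is unsubstantiated and implausible: the rate at which the running infimum of the Ces\`aro averages approaches their $\liminf$ is a pathwise statement that exponential mixing does not control. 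The paper sidesteps both problems with a simpler observation you miss: if the approximants share one Lipschitz constant independent of \emph{both} window parameters, then mere pointwise convergence on a compact set already forces the limit to be Lipschitz there (this is how Lemmas \ref{lem-c:6} and \ref{lem-c:8} are combined); no uniform convergence, Dini argument, or tail rate is needed.

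Third, and most concretely, you cannot apply Lemma \ref{lem-c:4} directly to $J_{N,L}$: the increments $Z_n(g)$ contain the integrals $\int_{n-1}^{n}g(\Phi_s)\,ds$, which depend on a continuum of time points, whereas Lemma \ref{lem-c:4} bounds $\lip$ of $x\mapsto\ew_x[f(\Phi_{s_1},\ldots,\Phi_{s_n})]$ only for functions of finitely many times. (Your citation of Lemma \ref{lem-c:8} as ``the continuous-time $\mathcal{L}^2$-estimate'' is a mislabel; that lemma is the uniform Lipschitz bound.) The missing ingredient is the discretization step: one must first replace the integrals by Riemann sums over partitions whose mesh is chosen, via the locally uniform mean-square continuity of $t\mapsto g(\Phi_t)$ (Lemmas \ref{lem-c:2}, \ref{lem-c:3}, \ref{lem-c:5}), fine enough that the resulting $\mathcal{L}^2$-error is summable (of order $1/n^4$) uniformly on the given compact set. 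Only then do the approximants become functions of finitely many values $\Phi_{s_j}$, to which Lemma \ref{lem-c:4} applies and yields the common Lipschitz constant. Repaired along these lines, your argument becomes essentially the paper's proof.
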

As mentioned earlier, the proof of the result above will be provided in Section \ref{sec:4}. 
A~significant aspect of the reasoning outlined there is that the sums appearing in \eqref{def:IS} depend on disjoint segments of the corresponding sample paths of the process $\Phi$ (see Lemma \ref{lem-c:8}). This is the reason for introducing two pairs of functions, involving sums of $Z_{2k}^2(g)\wedge p$ and $Z_{2k+1}^2(g)\wedge p$, rather than simply two functions relying on the sum of $Z_k(g)^2\wedge p$.

\begin{lemma}\label{lem:6}
For each $p\in\n\cup\{\infty\}$, we have
$$\lim_{n\to\infty} \frac{1}{n}\sum_{k=1}^n\left(Z_k^2(g)\wedge p\right)=\ew_{\mu_*}\left[Z_1^2(g)\wedge p\right]\;\;\;\pr_{\mu}-\text{a.s.},$$
and, in particular, condition \eqref{cnd:b1} holds.
\end{lemma}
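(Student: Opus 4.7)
The plan is to transfer the stationary-case convergence from Lemma \ref{lem:4} to the non-stationary initial distribution $\mu$ via the Markov property, exploiting the continuity of the functions $\I{i}{p}{d}$ and $\S{i}{p}{d}$ granted by Lemma \ref{lem:5} together with the asymptotic stability from Proposition \ref{prop:1}. Fix $p\in\n\cup\{\infty\}$, set $d:=\ew_{\mu_*}[Z_1^2(g)\wedge p]$, and, for $i\in\{0,1\}$, introduce
$$L_i^-:=\liminf_{n\to\infty}\frac{1}{n}\sum_{k=1}^n(Z_{2k+i}^2(g)\wedge p),\qquad L_i^+:=\limsup_{n\to\infty}\frac{1}{n}\sum_{k=1}^n(Z_{2k+i}^2(g)\wedge p),$$
so that the functions from \eqref{def:IS} read $\I{i}{p}{d}(x)=\ew_x[|L_i^- - d|\wedge 1]$ and $\S{i}{p}{d}(x)=\ew_x[|L_i^+ - d|\wedge 1]$.

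The first key step is to verify that both $L_i^-$ and $L_i^+$ are pointwise invariant under the shift $\theta_{2m}$ for every $m\in\n$. Since $\chi_g$ is a real-valued Lipschitz function and $g$ is bounded, each $Z_n(g)(\omega)$ is a finite real number and satisfies $Z_n(g)\circ\theta_r=Z_{n+r}(g)$, which is immediate from \eqref{m:2}. Writing $A_n:=\sum_{j=1}^n(Z_{2j+i}^2(g)\wedge p)$ and changing summation index,
$$\frac{1}{n}\sum_{k=1}^n(Z_{2k+i}^2(g)\wedge p)\circ\theta_{2m}=\frac{A_{n+m}-A_m}{n}=\frac{n+m}{n}\cdot\frac{A_{n+m}}{n+m}-\frac{A_m}{n}.$$
For every $\omega$, $A_m(\omega)<\infty$, so the last term tends to $0$ and the prefactor tends to $1$; taking $\liminf$ (resp.\ $\limsup$) of both sides yields $L_i^\pm\circ\theta_{2m}=L_i^\pm$.

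The second step combines this invariance with the general Markov property \eqref{e:mar_gen}. Applied to the bounded $\mathcal{F}^{\Phi}_{\infty}$-measurable random variable $Y:=|L_i^- - d|\wedge 1$, it gives
$$\ew_\mu[|L_i^- - d|\wedge 1]=\ew_\mu[Y\circ\theta_{2m}]=\ew_\mu\bigl[\ew_{\Phi_{2m}}[Y]\bigr]=\<\I{i}{p}{d},\,\mu P_{2m}\>.$$
Letting $m\to\infty$: by Proposition \ref{prop:1}, $\mu P_{2m}\stackrel{w}{\to}\mu_*$; by Lemma \ref{lem:5}, $\I{i}{p}{d}$ is bounded (by $1$) and continuous, so the right-hand side converges to $\<\I{i}{p}{d},\mu_*\>=\ew_{\mu_*}[|L_i^- - d|\wedge 1]$, which vanishes by Lemma \ref{lem:4}. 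Hence $L_i^-=d$ $\pr_\mu$-a.s., and repeating the argument verbatim with $\S{i}{p}{d}$ and $|L_i^+-d|\wedge 1$ yields $L_i^+=d$ $\pr_\mu$-a.s.

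Finally, splitting $\sum_{k=1}^n(Z_k^2(g)\wedge p)$ according to the parity of $k$, the isolated term $k=1$ contributes $O(1/n)$, while the even- and odd-indexed partial sums, once divided by $n$, are Cesàro averages of the sequences just treated and converge $\pr_\mu$-a.s. to $d/2$ each. Summing gives $\tfrac{1}{n}\sum_{k=1}^n(Z_k^2(g)\wedge p)\to d$ $\pr_\mu$-a.s. Specializing to $p=\infty$ and recalling that $M_0(g)=0$, so that $Z_1(g)=M_1(g)$ and therefore $d=\ew_{\mu_*}[Z_1^2(g)]=\ew_{\mu_*}[M_1^2(g)]=\sigma_g^2$, yields condition \eqref{cnd:b1}. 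The entire technical difficulty of the argument has been absorbed into Lemma \ref{lem:5}; once this continuity result is in hand, what remains is a routine synthesis of the Markov property, weak convergence, and Birkhoff's theorem.
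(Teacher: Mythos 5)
Your proposal is correct and follows essentially the same route as the paper: shift invariance of the lim inf/lim sup of the parity-split Cesàro averages, the Markov property \eqref{e:mar_gen}, the continuity from Lemma~\ref{lem:5} combined with asymptotic stability from Proposition~\ref{prop:1}, the vanishing of the stationary expectation via Lemma~\ref{lem:4}, and finally the recombination of even and odd indices. Your use of $\theta_{2m}$ (rather than $\theta_m$ for all $m$) is in fact the more careful choice, since shifting by an odd amount swaps the parity classes.
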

\begin{proof}
Let $p\in\n\cup\{\infty\}$ and put $d(p):=\ew_{\mu_*}\left[Z_1^2(g)\wedge p\right]$. Moreover, let $\I{i}{p}{d(p)}$ and~$\S{i}{p}{d(p)}$ be the maps defined according to \eqref{def:IS}. Then, by virtue of Lemma \ref{lem:4}, we infer that
\begin{equation}
\label{e3:1}
\<\I{i}{p}{d(p)},\,\mu_*\>=\<\S{i}{p}{d(p)},\,\mu_*\>=0\quad\text{for}\quad i\in\{0,1\}.
\end{equation}

Further, fix $i\in\{0,1\}$ and define
\begin{gather*}
Y_{\mathcal{I}}:=\left|\liminf_{n\to\infty}\left(\left(\frac{1}{n}\sum_{k=1}^n\left(Z_{2k+i}^2(g) \wedge p \right)\right) \right)-d(p) \right|\wedge 1,\\
Y_{\mathcal{S}}:=\left|\limsup_{n\to\infty}\left(\left(\frac{1}{n}\sum_{k=1}^n\left(Z_{2k+i}^2(g) \wedge p \right)\right) \right)-d(p) \right|\wedge 1.
\end{gather*}
From the fact that $Y_{\mathcal{I}}=Y_{\mathcal{I}}\circ \theta_m$ for every $m\in\n$ and \eqref{e:mar_gen} it follows that
\begin{align*}
\ew_{\mu}Y_{\mathcal{I}}&=\ew_{\mu}\left[\ew_{\mu}\left[Y_{\mathcal{I}}\circ\theta_m\,|\,\mathcal{F}_m\right]\right]=\ew_{\mu}\left[\ew_{\Phi_m} Y_{\mathcal{I}} \right]=\int_X \ew_x Y_{\mathcal{I}}\,(\mu P_m)(dx)\\
&=\<\I{i}{p}{d(p)},\,\mu P_m\>\quad\text{for all}\quad m\in\n.
\end{align*}
Analogously, we get
$$
\ew_{\mu}Y_{\mathcal{S}}=\<\S{i}{p}{d(p)},\,\mu P_m\>\quad\text{for all}\quad m\in\n.$$
On the other hand, Lemma \ref{lem:5} guarantees the functions $\I{i}{p}{d(p)}$, $\S{i}{p}{d(p)}$ are continuous, and thus they belong to $BC(X)$. Hence, as the semigroup $\{P_t\}_{t\geq 0}$ is asymptotically stable (by~Proposition \ref{prop:1}), keeping in mind \eqref{e3:1}, we can conclude that
\begin{gather*}
\ew_{\mu} Y_{\mathcal{I}}=\lim_{m\to\infty} \<\I{i}{p}{d(p)},\,\mu P_m\>=\<\I{i}{p}{d(p)},\,\mu_*\>=0,\\
\ew_{\mu} Y_{\mathcal{S}}=\lim_{m\to\infty} \<\S{i}{p}{d(p)},\,\mu P_m\>=\<\S{i}{p}{d(p)},\,\mu_*\>=0.
\end{gather*}
This implies that $Y_{\mathcal{I}}=Y_{\mathcal{S}}=0$ $\pr_{\mu}$ - a.s., which, in turn, yields that
$$
\liminf_{n\to\infty}\frac{1}{n}\sum_{k=1}^n\left(Z_{2k+i}^2(g) \wedge p \right)=\limsup_{n\to\infty}\frac{1}{n}\sum_{k=1}^n\left(Z_{2k+i}^2(g) \wedge p \right)=d(p)\;\;\;\pr_{\mu}-a.s.
$$
We have, therefore, shown that
\begin{equation}
\label{e3:2}
\lim_{n\to\infty}\frac{1}{n}\sum_{k=1}^n\left(Z_{2k+i}^2(g) \wedge p \right)=d(p)\;\;\;\pr_{\mu}\text{ - a.s.}
\end{equation}

Now, to complete the proof, it remains to observe that \eqref{e3:2}, applied with $i\in\{0,1\}$, gives us
\begin{align*}
\lim_{n\to\infty} \frac{1}{2n}\sum_{k=1}^{2n} \left(Z_k^2(g)\wedge p\right)
&=\lim_{n\to\infty} \frac{1}{2n}\left(\sum_{k=1}^n \left(Z_{2k}^2(g)\wedge p\right) +\sum_{k=1}^n \left(Z_{2k+1}^2(g)\wedge p\right)\right.\\
&\quad \left. + \left(Z_1^2(g)\wedge p\right) -\left(Z_{2n+1}^2(g)\wedge p\right)\right)=d(p)\;\;\;\pr_{\mu}-\text{a.s.}
\end{align*}
and
\begin{align*}
\lim_{n\to\infty} \frac{1}{2n+1}\sum_{k=1}^{2n+1} \left(Z_k^2(g)\wedge p\right)
&=\lim_{n\to\infty} \frac{2n}{2n+1}\cdot \frac{1}{2n}\left(\sum_{k=1}^{2n} \left(Z_{k}^2(g)\wedge p\right) + \left(Z_{2n+1}^2(g)\wedge p\right)\right)\\
&=d(p)\;\;\;\pr_{\mu}-\text{a.s.,}
\end{align*}
Obviously, condition \eqref{cnd:b1} is achieved by taking $p=\infty$.
\end{proof}

\begin{remark}\label{rem:6}
Note that $\sup_{n\in\n} \ew_{\mu}\left[|Z_n(g)|^{\zeta}\right]<\infty$. This follows in a manner analogous to the proof of Lemma \ref{lem:3}. Specifically, it suffices to observe that
$$|Z_n(g)|^{\zeta}\leq 4^{\zeta-1}\left(|\chi_g(\Phi_n)|^{\zeta}+ |\chi_g(\Phi_{n-1})|^{\zeta}\right)+2^{\zeta-1}\norma{g}_{\infty}^{\zeta}\quad\text{for all}\quad n\in\n,$$
and then use Remark \ref{rem:4}.
\end{remark}

\begin{lemma}\label{lem:7}
Condition \eqref{cnd:b2} holds.
\end{lemma}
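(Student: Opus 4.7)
The plan is to upgrade the $\pr_{\mu}$-a.s.\ convergence supplied by Lemma~\ref{lem:6} (with $p = \infty$) to convergence of expectations, by means of a uniform integrability argument based on the higher-moment estimate in Remark~\ref{rem:6}.

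First, I would apply Lemma~\ref{lem:6} with $p = \infty$, which gives
$$\lim_{n\to\infty} \frac{1}{n}\sum_{k=1}^n Z_k^2(g) \;=\; \ew_{\mu_*}\bigl[Z_1^2(g)\bigr] \quad \pr_{\mu}\text{-a.s.}$$
Since $M_0(g) = 0$, we have $Z_1(g) = M_1(g)$, so the right-hand side coincides with $\sigma_g^2$ by \eqref{e:sigma_short}. Writing $A_n := n^{-1}\sum_{k=1}^n Z_k^2(g)$, this reads $A_n \to \sigma_g^2$ $\pr_{\mu}$-a.s.

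Next, I would verify uniform integrability of $\{A_n\}_{n\in\mathbb{N}}$ under $\pr_{\mu}$. By Remark~\ref{rem:6}, the constant $K := \sup_{k\in\mathbb{N}} \ew_{\mu}[|Z_k(g)|^{\zeta}]$ is finite. Because $\zeta/2 > 1$, the map $u \mapsto u^{\zeta/2}$ is convex on $[0,\infty)$, and Jensen's inequality applied to the uniform average over $k = 1, \ldots, n$ yields
$$A_n^{\zeta/2} \;\leq\; \frac{1}{n}\sum_{k=1}^n \bigl(Z_k^2(g)\bigr)^{\zeta/2} \;=\; \frac{1}{n}\sum_{k=1}^n |Z_k(g)|^{\zeta},$$
whence $\sup_{n\in\mathbb{N}} \ew_{\mu}\bigl[A_n^{\zeta/2}\bigr] \leq K < \infty$. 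A uniform bound on the $\mathcal{L}^p$-norm with $p = \zeta/2 > 1$ is a standard sufficient condition for uniform integrability, so the family $\{A_n\}_{n\in\mathbb{N}}$ is uniformly integrable under $\pr_{\mu}$.

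Finally, combining the $\pr_{\mu}$-a.s.\ convergence $A_n \to \sigma_g^2$ with the established uniform integrability, I would conclude convergence in $\mathcal{L}^1(\pr_{\mu})$, which yields
$$\frac{s_n^2(\mu,g)}{n} \;=\; \ew_{\mu}[A_n] \;\xrightarrow[n\to\infty]{}\; \sigma_g^2,$$
i.e.\ condition \eqref{cnd:b2}. The argument is essentially routine once Lemma~\ref{lem:6} is available; the only delicate point — and the only place where the strict inequality $\zeta > 2$ from \ref{cnd:a4} genuinely enters — is in the $\mathcal{L}^{\zeta/2}$ bound that drives uniform integrability, providing precisely the margin needed to trade the almost sure statement for convergence of $s_n^2(\mu,g)/n$.
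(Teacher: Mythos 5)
Your proof is correct, and it reaches \eqref{cnd:b2} by a somewhat different route than the paper. The paper works with the truncated quantities for every finite $p$: it applies Lemma~\ref{lem:6} at level $p$, uses dominated convergence (legitimate since $Z_k^2(g)\wedge p\leq p$) to get $\frac{1}{n}\sum_{k=1}^n\ew_{\mu}[Z_k^2(g)\wedge p]\to\ew_{\mu_*}[Z_1^2(g)\wedge p]$, and then controls the truncation errors $|\ew[Z_k^2\wedge p]-\ew[Z_k^2]|$ uniformly in $k$ by $2p^{-(\zeta-2)/2}\sup_m\ew_{\mu}[|Z_m(g)|^{\zeta}]$ before letting $p\to\infty$ in a three-term triangle inequality. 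You instead invoke Lemma~\ref{lem:6} only at $p=\infty$ and trade the truncation bookkeeping for uniform integrability of the Ces\`aro averages $A_n$, obtained from the Jensen bound $A_n^{\zeta/2}\leq \frac{1}{n}\sum_{k=1}^n|Z_k(g)|^{\zeta}$ and the de la Vall\'ee-Poussin criterion, then close with Vitali's convergence theorem. All the individual steps check out: $Z_1(g)=M_1(g)$ since $M_0(g)=0$, so the a.s.\ limit is indeed $\sigma_g^2$ by \eqref{e:sigma_short}; the $\mathcal{L}^{\zeta/2}$ bound with $\zeta/2>1$ does give uniform integrability; and $\ew_{\mu}[A_n]=s_n^2(\mu,g)/n$ by the orthogonality of martingale increments already recorded in the definition of $s_n^2(\mu,g)$. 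Both arguments draw on exactly the same two inputs -- Lemma~\ref{lem:6} and the uniform $\zeta$-moment bound of Remark~\ref{rem:6} -- so they are morally equivalent; yours is arguably the cleaner packaging, while the paper's explicit truncation estimate \eqref{e3:4} has the minor advantage of reusing a computation already cited from \cite{b:czapla-strassen}.
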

\begin{proof}
Applying Lemma \ref{lem:6}, together with Lebesgue's dominated convergence theorem, we can conclude that
\begin{equation}
\label{e3:3}
\lim_{n\to\infty} \frac{1}{n}\sum_{k=1}^n\ew_{\mu}\left[Z_k^2(g)\wedge p\right]=\ew_{\mu_*}\left[Z_1^2(g)\wedge p\right]\quad\text{for all}\quad p\in\n\;\;\;\pr_{\mu}\text{ - a.s.}
\end{equation}
Further, exactly as in the proof of \cite[Lemma 4.4]{b:czapla-strassen}, it can be checked that
\begin{align}
\begin{split}
\label{e3:4}
\left|\ew_{\mu}\left[Z_k^2(g)\wedge p\right]-\ew_{\mu}\left[Z_k^2(g)\right] \right|&\leq 2\sup_{m\in\n}\ew_{\mu}\left[Z_m^2(g)\mathbbm{1}_{\left\{ Z_m^2(g)\geq p\right\} }\right]\\
&\leq 2p^{-(\zeta-2)/2}\sup_{m\in\n}\ew_{\mu}\left[|Z_m(g)|^{\zeta}\right] \quad\text{for all}\quad k,p\in\n,
\end{split}
\end{align}
where $\sup_{m\in\n}\ew_{\mu}\left[|Z_m(g)|^{\zeta}\right]<\infty$ by Remark \ref{rem:6}. On the other hand, we can write
\begin{align*}
\left|\frac{1}{n}\sum_{k=1}^n \ew_{\mu}\left[Z_k^2(g)\right]-\sigma_g^2\right|&\leq 
\frac{1}{n}\sum_{k=1}^n\left| \ew_{\mu}\left[Z_k^2(g)\right]-\ew_{\mu}\left[Z_k^2(g)\wedge p\right]\right|\\
&\quad+\frac{1}{n}\sum_{k=1}^n\left| \ew_{\mu}\left[Z_k^2(g)\wedge p\right]-\ew_{\mu_*}\left[Z_1^2(g)\wedge p\right]\right|\\
&\quad+\left|\ew_{\mu_*}\left[Z_1^2(g)\wedge p\right]-\ew_{\mu_*}\left[Z_1^2(g)\right] \right|
\quad\text{for all} \quad n,p\in\n.
\end{align*}
Consequently, using \eqref{e3:3} and \eqref{e3:4} we obtain
\begin{align*}
\limsup_{n\to\infty}\left|\frac{1}{n}\sum_{k=1}^n \ew_{\mu}\left[Z_k^2(g)\right]-\sigma_g^2\right|&\leq 2p^{-(\zeta-2)/2}\sup_{m\in\n}\ew_{\mu}\left[|Z_m(g)|^{\zeta}\right]\\
&\quad+\left|\ew_{\mu_*}\left[Z_1^2(g)\wedge p\right]-\ew_{\mu_*}\left[Z_1^2(g)\right] \right|\quad \text{for all}\quad p\in\n, 
\end{align*}
which, after passing with $p$ to $\infty$, leads to \eqref{cnd:b2}.
\end{proof}

\begin{lemma}\label{lem:8}
Conditions \eqref{cnd:b3} and \eqref{cnd:b4} are satisfied.
\end{lemma}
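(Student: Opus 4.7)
The plan is to reduce both \eqref{cnd:b3} and \eqref{cnd:b4} to convergence of series of the form $\sum n^{-\alpha}$ with $\alpha>1$, using just two ingredients already established: the uniform $\zeta$-th moment bound $M:=\sup_{n\in\n}\ew_{\mu}[|Z_n(g)|^{\zeta}]<\infty$ from Remark \ref{rem:6}, and the asymptotics $s_n^2(\mu,g)/n\to\sigma_g^2>0$ from Lemma \ref{lem:7}. The latter ensures that there exist $\bar{n}\in\n$ and $c>0$ such that $s_n(\mu,g)\geq c\sqrt{n}$ for all $n\geq \bar{n}$, which is all we need from it.

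For condition \eqref{cnd:b4}, I would use the elementary inequality
$$|Z_n(g)|\mathbbm{1}_{\{|Z_n(g)|\geq \varepsilon s_n(\mu,g)\}}\leq (\varepsilon s_n(\mu,g))^{1-\zeta}|Z_n(g)|^{\zeta},$$
valid since $\zeta>1$. Taking expectations and dividing by $s_n(\mu,g)$ gives
$$s_n^{-1}(\mu,g)\ew_{\mu}\!\left[|Z_n(g)|\mathbbm{1}_{\{|Z_n(g)|\geq \varepsilon s_n(\mu,g)\}}\right]\leq \varepsilon^{1-\zeta}M\,s_n^{-\zeta}(\mu,g)\leq \varepsilon^{1-\zeta}Mc^{-\zeta}n^{-\zeta/2}$$
for $n\geq\bar n$, and the last series is summable because $\zeta/2>1$.

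For condition \eqref{cnd:b3}, I would fix $\eta:=\min(\zeta,4)\in(2,4]$ and observe that $\sup_n \ew_{\mu}[|Z_n(g)|^{\eta}]<\infty$ (by H\"older's inequality when $\eta<\zeta$, trivially otherwise). On the event $\{|Z_n(g)|<\delta s_n(\mu,g)\}$, since $4-\eta\geq 0$, one has
$$|Z_n(g)|^4\mathbbm{1}_{\{|Z_n(g)|<\delta s_n(\mu,g)\}}\leq (\delta s_n(\mu,g))^{4-\eta}|Z_n(g)|^{\eta},$$
so that
$$s_n^{-4}(\mu,g)\,\ew_{\mu}\!\left[Z_n^4(g)\mathbbm{1}_{\{|Z_n(g)|<\delta s_n(\mu,g)\}}\right]\leq \delta^{4-\eta}\sup_m\ew_{\mu}[|Z_m(g)|^{\eta}]\,s_n^{-\eta}(\mu,g),$$
which is bounded by a constant multiple of $n^{-\eta/2}$ for $n\geq\bar n$. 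Since $\eta/2>1$, the corresponding series converges; in fact this holds for every $\delta>0$.

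There is no real obstacle in this step, as the hard analytic work has already been absorbed into the preceding lemmas: Lemma \ref{lem:7} provides the non-degenerate lower bound on $s_n(\mu,g)$, and Remark \ref{rem:6} supplies the strictly-greater-than-second-order moment bound that furnishes the slack $(\zeta-2)/2>0$ needed for summability. The whole argument is essentially a deterministic computation once those two facts are in hand.
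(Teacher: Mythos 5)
Your proposal is correct and follows essentially the same route as the paper: truncate the exponent at $\hat{\zeta}:=\zeta\wedge 4$ for \eqref{cnd:b3}, use the indicator event to trade powers of $|Z_n(g)|$ for powers of $s_n(\mu,g)$, invoke the uniform moment bound of Remark \ref{rem:6}, and use \eqref{cnd:b2} to get $s_n^{-1}(\mu,g)=\mathcal{O}(n^{-1/2})$, yielding summable series with exponent strictly greater than $1$. The only (harmless) deviation is that for \eqref{cnd:b4} you work with the full exponent $\zeta$ rather than $\hat{\zeta}$, which is equally valid.
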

\begin{proof}
To simplify notation, we will write $s_n^2$ instead of $s_n^2(\mu,g)$. Let $\hat{\zeta}:=\zeta \wedge 4$, $\beta:=\hat{\zeta}-2$ (then $\beta\in (0,2)$), and choose $\bar{n}$ sufficiently large such that $s_n^2>0$ for all $n\geq \bar{n}$ (this can be done due to Lemma \ref{lem:7}, which entails \eqref{cnd:b5}). Then, for any $\delta,\varepsilon>0$ and every $n\geq\bar{n}$, we have
\begin{align*}
s_n^{-4}\ew_{\mu}\left[Z_n^4(g) \mathbbm{1}_{\left\{|Z_n(g)|<\delta s_n \right\}}\right]
&=s_n^{-4}\ew_{\mu}\left[|Z_n(g)|^{2+\beta}|Z_n(g)|^{2-\beta} \mathbbm{1}_{\left\{|Z_n(g)|<\delta s_n \right\}}\right]\\
&\leq \delta^{2-\beta}s_n^{-(2+\beta) }\ew_{\mu}\left[|Z_n(g)|^{2+\beta}\right]
\leq \delta^{4-\hat{\zeta}}s_n^{-\hat{\zeta}}\sup_{k\in\n}\ew_{\mu}\left[|Z_k(g)|^{\hat{\zeta}}\right]
\end{align*}
and
\begin{align*}
s_n^{-1}\ew_{\mu}\left[|Z_n(g)| \mathbbm{1}_{\left\{|Z_n(g)|\geq \varepsilon  s_n \right\}}\right]
&=s_n^{-1}\ew_{\mu}\left[\frac{|Z_n(g)|^{2+\beta}}{|Z_n(g)|^{1+\beta}}  \mathbbm{1}_{\left\{|Z_n(g)|\geq \varepsilon  s_n \right\}}\right]\\
&\leq \varepsilon^{-(1+\beta)}s_n^{-(2+\beta)}\ew_{\mu}\left[|Z_n(g)|^{2+\beta}\right]\\
&\leq \varepsilon^{1-\hat{\zeta}}s_n^{-\hat{\zeta}}\sup_{k\in\n}\ew_{\mu}\left[|Z_k(g)|^{\hat{\zeta}}\right],
\end{align*}
where $\sup_{k\in\n}\ew_{\mu}\left[|Z_k(g)|^{\hat{\zeta}}\right]<\infty$ by Remark \ref{rem:6}. Additionally, condition \eqref{cnd:b2} (established in Lemma \ref{lem:7}) enables us to select $c\in (0,\infty)$ such that $n^{\hat{\zeta}/2}s_n^{-\hat{\zeta}}\leq c$ for all $n\in\n$. Combining this with the bounds derived above, we see that the series in  \eqref{cnd:b3} and \eqref{cnd:b4} indeed converge~(for any $\delta,\varepsilon>0$).
\end{proof}

\begin{lemma}\label{lem:9}
If $\sigma_g^2>0$, then the martingale $\{M_n\}_{n\in\n}$ satisfies the LIL (under $\pr_{\mu}$) in the manner of \eqref{e:LIL_mart}.
\end{lemma}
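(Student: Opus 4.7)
The plan is to assemble the preceding lemmas and apply the reduction recorded in Remark \ref{rem:5}, which already casts everything in the form needed to invoke the Heyde--Scott criterion \cite[Theorem 1]{b:heyde_scott}. Since the process $\{M_n(g)\}_{n \in \mathbb{N}_0}$ has been verified (at the end of Section \ref{sec:3.1}) to be an $\{\mathcal{F}_n\}$-martingale in $\mathcal{L}^{\zeta}(\pr_{\mu})$, the Heyde--Scott framework is indeed applicable. Remark \ref{rem:5} tells us that, under the standing hypothesis $\sigma_g^2 > 0$, the four conditions \eqref{cnd:b1}--\eqref{cnd:b4} are jointly sufficient to yield \eqref{e:LIL_mart}; moreover, the same remark handles the passage from the ``intrinsic'' normalization $\sqrt{2 s_n^2(\mu,g)\ln\ln s_n^2(\mu,g)}$ to the desired $\sqrt{2\sigma_g^2\, n\ln\ln n}$, which is a short computation relying only on \eqref{cnd:b2}.

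What remains is then purely a citation step. Condition \eqref{cnd:b1} is precisely Lemma \ref{lem:6} taken with $p = \infty$. Condition \eqref{cnd:b2} is the content of Lemma \ref{lem:7}. Conditions \eqref{cnd:b3} and \eqref{cnd:b4} are both established in Lemma \ref{lem:8}. The existence of the threshold $\bar{n}$ such that $s_n^2(\mu,g) > 0$ for all $n \geq \bar{n}$ is implicit in \eqref{cnd:b2}, which forces $s_n^2(\mu,g) \to \infty$ as $n \to \infty$. Thus all hypotheses of Remark \ref{rem:5} are in place, and \eqref{e:LIL_mart} follows at once.

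There is no genuine difficulty remaining at this stage: the substantive work of the paper has been the martingale-based decomposition \eqref{e:decomposition}, the discretization argument of Lemma \ref{lem:1}, and above all the transfer of the Birkhoff-type identity from $\pr_{\mu_*}$ to the non-stationary law $\pr_{\mu}$ carried out in Lemma \ref{lem:6}, which in turn rested on the continuity statement of Lemma \ref{lem:5}. Once those ingredients are in hand, Lemma \ref{lem:9} becomes essentially a bookkeeping step; the ``main obstacle'' is located upstream, in the verification of Lemma \ref{lem:5}.
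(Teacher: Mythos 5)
Your proposal is correct and matches the paper's own proof, which likewise disposes of Lemma \ref{lem:9} by invoking Remark \ref{rem:5} together with Lemmas \ref{lem:6}--\ref{lem:8}. The extra commentary on where the real difficulty lies is accurate but not needed for the argument.
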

\begin{proof}
In light of Remark \ref{rem:5}, the assertion follows directly from Lemmas \ref{lem:6}-\ref{lem:8}.
\end{proof}

\subsection{Summary}
Based on the results from the previous two subsections, the proof of the main theorem can now be outlined as follows:

\begin{proof}[Proof of Theorem \ref{thm:main}]
First of all, $\sigma_g^2<\infty$ by Lemma \ref{lem:3}. Further, Lemma \ref{lem:1} reduces the proof to verifying that the limits stated in the theorem hold along the integers. By decomposition \eqref{e:decomposition} and Lemma \ref{lem:2}, this task, in turn, reduces to establishing the LIL for $\{M_n(g)\}_{n \in \mathbb{N}}$, namely, the limits in \eqref{e:LIL_mart}. The latter, however, has been shown in Lemma~\ref{lem:9}.
\end{proof}
The remaining challenge is to prove Lemma \ref{lem:5}. This will be the focus of the next section.

\section{Proof of Lemma \ref{lem:5}}\label{sec:4}
We will prove the continuity of  $\I{i}{p}{d}$. The reasoning for $\S{i}{p}{d}$ proceeds analogously. The proof will rely on showing that, on every compact set, the function  $\I{i}{p}{d}$ can be expressed as the pointwise limit of a double sequence of Lipschitz continuous functions, sharing the same Lipschitz constant. 

Throughout the section, we assume that all hypotheses \ref{cnd:a1}–\ref{cnd:a4} are satisfied, although not all of them are used in every individual auxiliary result.

Let us begin with demonstrating that conditions \ref{cnd:a1} and \ref{cnd:a3} ensure the mean-square continuity of $t\mapsto g(\Phi_t)$. This, in turn, will enable us to approximate (in the mean-square sense) the integrals involved in the martingale increments $Z_n(g)$ by using suitable elementary processes. To establish this result, we require the following (cf. \cite[Proposition 3.6.6]{b:ziemlanska}):

\begin{lemma}\label{lem-c:1}
The map $[0,\infty)\ni t \mapsto \delta_x P_t\in (\mathcal{M}_1(X), d_{BL})$ is continuous for every $x\in X$.
\end{lemma}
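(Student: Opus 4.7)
The plan is to establish the claimed $d_{BL}$-continuity of $t \mapsto \delta_x P_t$ by deriving, for each fixed $t \geq 0$ and $x \in X$, a single scalar upper bound for $d_{BL}(\delta_x P_t, \delta_x P_{t \pm h})$ that is uniform over $\{f \in BL(X) : \norma{f}_{BL} \leq 1\}$ and tends to zero as $h \to 0^+$. The starting observation, afforded by \ref{cnd:a3}, is that for every such $f$ and every $s \geq 0$, the image $P_s f$ satisfies $\lip(P_s f) \leq e^{-\gamma s}\lip f \leq 1$ and $\norma{P_s f}_\infty \leq 1$; consequently, the pointwise bound $|P_s f(z) - P_s f(y)| \leq 2(\rho(y, z) \wedge 1)$ holds uniformly in $f$ and $s$.

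Next, I would exploit the semigroup identity to route both the right- and the left-hand differences through integration against the single auxiliary measure $\delta_x P_h$. Writing $P_{t+h} f = P_h(P_t f)$ in the right-continuity case and $P_t f = P_h(P_{t-h} f)$ in the left-continuity case (the latter requiring $t > 0$ and $h \in (0, t]$), and letting $g$ denote $P_t f$ or $P_{t-h} f$ accordingly, one obtains
\[
|P_{t \pm h} f(x) - P_t f(x)| = |P_h g(x) - g(x)| \leq \int_X |g(z) - g(x)|\,(\delta_x P_h)(dz) \leq 2 P_h \varphi_x(x),
\]
where $\varphi_x(z) := \rho(x, z) \wedge 1$. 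Taking the supremum over $\norma{f}_{BL} \leq 1$ yields $d_{BL}(\delta_x P_t, \delta_x P_{t \pm h}) \leq 2 P_h \varphi_x(x)$, and since $\varphi_x \in BC(X)$ with $\varphi_x(x) = 0$, condition \ref{cnd:a1} forces this upper bound to vanish as $h \to 0^+$.

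The main obstacle lies in the left-continuity case: the naive decomposition $\<f, \delta_x P_t - \delta_x P_{t-h}\> = \<P_h f - f, \delta_x P_{t-h}\>$ places the $h$-dependence on the integrating measure $\delta_x P_{t-h}$, blocking any direct appeal to \ref{cnd:a1} through dominated convergence. The resolution, enabled precisely by the contractivity in \ref{cnd:a3}, is to reverse the order of composition (using $P_t = P_h P_{t-h}$ in place of $P_t = P_{t-h} P_h$), which transfers the $h$-dependence from the integrating measure into the test function $g_h = P_{t-h} f$, whose Lipschitz constant and sup-norm are bounded by those of $f$ uniformly in $h$.
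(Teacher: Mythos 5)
Your proof is correct, and it takes a genuinely different — and more self-contained — route than the paper. The paper handles right-continuity the same way in spirit (via \ref{cnd:a1} and the semigroup law), but for left-continuity it first deduces from \ref{cnd:a3} the uniform equicontinuity of the family $\{P_tf:\ t\ge 0,\ f\in BL(X),\ \lip f\le 1\}$, then invokes an external result (\cite[Theorem 4.1]{b:szarek_hille}) to get pointwise equicontinuity of the operators $(\cdot)P_t$ on $(\mathcal{M}_1(X),d_{BL})$ uniformly in $t$, and finally combines this with $d_{BL}(\delta_x P_s,\delta_x)\to 0$ using the factorization $\delta_x P_{t_0}=(\delta_x P_s)P_{t_0-s}$. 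Your argument exploits the same structural idea — apply the small time increment first, which is exactly what \ref{cnd:a3} makes legitimate because $\norma{P_{t-h}f}_{BL}$ is controlled uniformly in $h$ — but realizes it on the function side, yielding the explicit two-sided modulus $d_{BL}(\delta_x P_{t\pm h},\delta_x P_t)\le 2P_h\varphi_x(x)$ with $\varphi_x=\rho(x,\cdot)\wedge 1\in BC(X)$, after which \ref{cnd:a1} finishes both cases at once. This buys a quantitative bound and avoids the citation; the paper's route buys a statement (uniform equicontinuity of the Markov operators) that is reusable elsewhere. Two small points worth making explicit in a final write-up: the Lipschitz bound $\lip(P_sf)\le e^{-\gamma s}\lip f$ uses $|\<f,\delta_uP_s-\delta_vP_s\>|\le(\lip f)\,d_W(\delta_uP_s,\delta_vP_s)$, which requires $\delta_uP_s\in\mathcal{M}_{1,1}(X)$ — guaranteed by the first clause of \ref{cnd:a3}; and the inequality $|P_sf(z)-P_sf(y)|\le 2(\rho(y,z)\wedge 1)$ combines the Lipschitz bound with $\norma{P_sf}_\infty\le\norma{f}_\infty\le 1$, both of which you correctly record.
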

\begin{proof}
Due to the semigroup property, condition \ref{cnd:a1}, in fact, guarantees that 
$$\lim_{t\to t_0^+} P_tf(x)=P_{t_0}f(x)\quad\text{for any}\quad f\in BC(X),\;x\in X,\;t_0\geq 0.\vspace{-0.2cm}$$
This clearly implies that, for every $x\in X$, the map $t \mapsto \delta_x P_t$ is right-continuous. Therefore, it remains to establish its left-continuity. To this end, note first that \ref{cnd:a3} ensures the \hbox{(uniform)} equicontinuity of the family $\{P_t f:\; t\geq 0,\; f\in BL(X),\;\lip f\leq 1\}$, since for every $t\geq 0$ and any $f\in BL(X)$ with $\lip f\leq 1$, we have
$$|P_t f(x)-P_t f(y)|=|\<f, \delta_x P_t\>-\<f, \delta_y P_t\>|\leq d_W(\delta_x P_t, \delta_y P_t)\leq \rho(x,y)\quad\text{for}\quad x,y\in X.$$
Hence, according to \cite[Theorem 4.1]{b:szarek_hille}, the family $\{(\cdot)P_t:\;t\geq 0\}$ of Markov operators from $(\mathcal{M}_1(X), d_{BL})$ into itself is point-wise equicontinuous. Now, let $x\in X$, $t_0\geq 0$, and $\varepsilon>0$. In view of the above observation, there exists~$\delta>0$ such that
\begin{equation}
\label{e-c:1}
\sup_{t\geq 0}\, d_{BL}(\delta_x P_t, \mu P_t)<\varepsilon\quad\text{for any}\quad\mu\in\mathcal{M}_1(X)\quad\text{satisfying}\quad d_{BL}(\delta_x,\mu)<\delta.
\end{equation}
On the other hand, from assumption \ref{cnd:a1} it follows that $\delta_x P_t \stackrel{w}{\to} \delta_x$ as $t\to 0^+$, which gives \hbox{$\lim_{t\to 0^+} d_{BL}(\delta_x P_t,\, \delta_x)=0$}. Thus, we can choose $s_0\in (0,t_0)$ such that $d_{BL}(\delta_x,\, \delta_x P_s)<\delta$ for all $s\in (0,s_0)$. Then, referring to \eqref{e-c:1}, we infer that
$$d_{BL}(\delta_x P_{t_0-s},\,\delta_x P_{t_0})=d_{BL}(\delta_x P_{t_0-s},\,(\delta_x P_s) P_{t_0-s})<\varepsilon\quad \text{for all}\quad s\in (0,s_0).$$
In other words, we have shown that $\lim_{t\to t_0^{-}} d_{BL}(\delta_x P_t,\,\delta_x P_{t_0})=0$, which, due to the arbitrariness of $t_0$, completes the proof.
\end{proof}

\begin{lemma}\label{lem-c:2}
The map $[0,\infty)\ni t \mapsto g(\Phi_t)\in \mathcal{L}^2(\pr_x)$ is continuous for every $x\in X$.
\end{lemma}
\begin{proof}
Fix $x\in X$ and $t\geq 0$. Since
$$|g(u)-g(v)|^2\leq \norma{g}_{BL}^2(\rho(u,v)\wedge 2)^2=\norma{g}_{BL}^2\left( \rho(u,v)^2 \wedge 4 \right)\quad\text{for any}\quad u,v\in X,$$
it follows that, for every $s\geq 0$,
\begin{align*}
\lnorma{x}{g(\Phi_s)-g(\Phi_t)}^2
&=\ew_x\left[\left|g(\Phi_s)-g(\Phi_t) \right|^2\right]
\leq \norma{g}_{BL}^2\ew_x\left[\rho(\Phi_s,\Phi_t)^2\wedge 4\right]\\
&=\norma{g}_{BL}^2\int_X \int_X \left(\rho(u,v)^2\wedge 4\right) P_{|t-s|}(u,dv)P_{s\wedge t}(x,du),
\end{align*}
where the final equality comes from \eqref{e:Ex}. Letting
$$L_u(v):=\rho(u,v)^2 \wedge 4\quad\text{for}\quad u,v\in X,$$
we therefore obtain
\begin{equation}
\label{e-c:2} \lnorma{x}{g(\Phi_s)-g(\Phi_t)}^2\leq \norma{g}_{BL}^2\int_X P_{|t-s|}L_u(u)\,P_{s\wedge t}(x,du)\quad\text{for}\quad s\geq 0.
\end{equation}
Further, note that, as $L_u\in BC(X)$ for every $u\in X$, hypothesis \ref{cnd:a1} gives
\begin{equation}
\label{e-c:3}
\lim_{s\to t} P_{|t-s|}L_u(u)=L_u(u)=0.
\end{equation}

Clearly, for every $s>t$, the integral on the right-hand of \eqref{e-c:2} is taken with respect to $P_t(x,du)$. Hence, applying the Lebesgue dominated convergence theorem, from \eqref{e-c:2} and~\eqref {e-c:3} we can conclude that
$$\lim_{s\to t^+} \lnorma{x}{g(\Phi_s)-g(\Phi_t)}=0.$$

What is now left is to prove that the limit as $s\to t^{-}$ is $0$ as well. This is more involved, since for $s<t$, the measure of integration in \eqref{e-c:2} is changing with $s$. To prove this part, let $\{s_n\}_{n\in\n}$ be an arbitrary sequence in $[0,t)$ such that $\lim_{n\to\infty} s_n=t$, and define
$$f_n(u):=P_{t-s_n} L_u(u)\quad\text{for}\quad u\in X,\; n\in\n.$$
The aim is to establish that
\begin{equation}
\label{e-c:4}
\lim_{n\to \infty} \lnorma{x}{g(\Phi_{s_n})-g(\Phi_t)}=0.
\end{equation}
Having in mind \eqref{e-c:2}, for every $n\in\n$, we can write
\begin{align*}
\lnorma{x}{g(\Phi_{s_n})-g(\Phi_t)}^2&
\leq \norma{g}_{BL}^2\<f_n,\,\delta_xP_{s_n}\>\\
&\leq  \norma{g}_{BL}^2\left(|\<f_n,\,\delta_xP_{s_n}-\delta_x P_t\>|+|\<f_n,\,\delta_xP_t\>| \right)\\
&\leq \norma{g}_{BL}^2 \left(\sup_{k\in\n}|\<f_k,\,\delta_xP_{s_n}-\delta_x P_t\>|+|\<f_n,\,\delta_xP_t\>| \right).
\end{align*}
Since, due to \eqref{e-c:3}, $\lim_{n\to\infty} f_n(u)=0$ for all $u\in X$, by the dominated convergence theorem it follows that $\lim_{n\to \infty} \<f_n,\delta_x P_t\>=0$. Hence, we only need to show that
\begin{equation}
\label{e-c:5}
\lim_{n\to\infty}\sup_{k\in\n}|\<f_k,\,\delta_xP_{s_n}-\delta_x P_t\>|=0.
\end{equation}
This can be deduced from the Rao theorem (see, e.g., \cite[Theorem 8.2.18]{b:bogachev}), provided that \hbox{$\delta_x P_{s_n}\stackrel{w}{\to}\delta_x P_t$}, and that $\{f_n:n\in\n\}$ is both uniformly bounded and pointwise equicontinuous. The former holds by Lemma \ref{lem-c:1}. Further, it is clear that the family $\{f_n:n\in\n\}$ is uniformly bounded, as $|f_n(u)|\leq \norma{L_u}_{\infty}\leq 4$ for all $u\in X$ and $n\in\n$. To demonstrate its equicontinuity, first observe that
\begin{align*}
|L_u(z)-L_v(z)|&=|L_z(u) - L_z(v)|=\left|\rho(z,u)\wedge 2+\rho(z,v)\wedge 2 \right| \cdot \left|\rho(z,u)\wedge 2-\rho(z,v)\wedge 2 \right|\\
&\leq 4|\rho(z,u)-\rho(z,v)|\leq 4\rho(u,v)\quad\text{for any}\quad z,u,v\in X,
\end{align*}
which means that the maps $u\mapsto L_u(z)$ and $L_z$, $z\in X$, are Lipschitz continuous with constant~$4$. Taking into account this observation and hypothesis \ref{cnd:a3}, for each $n\in\n$ and any $u,v\in X$, we obtain
\begin{align*}
|f_n(u)-f_n(v)|&=|P_{t-s_n} L_u(u)-P_{t-s_n} L_v(v)|\\
&\leq |P_{t-s_n} (L_u-L_v)(u)|+|P_{t-s_n} L_v(u)-P_{t-s_n} L_v(v)|\\
&= \<|L_u-L_v|,\,\delta_u P_{t-s_n}\>+|\<L_v,\,\delta_u P_{t-s_n}-\delta_v P_{t-s_n}\>|\\
&\leq 4\rho(u,v)+4\,d_W(\delta_u P_{t-s_n}, \delta_v P_{t-s_n})\leq 8\rho(u,v),
\end{align*}
which, in particular yields, that $\{f_n:n\in\n\}$ is (uniformly, and thus pointwise) equicontinuous. Consequently, \cite[Theorem 8.2.18]{b:bogachev} ensures \eqref{e-c:5}, which, in turn, leads to \eqref{e-c:4}. Since the sequence $\{s_n\}_{n\in\n}$ was chosen arbitrarily, we finally get
$$\lim_{s\to t^-} \lnorma{x}{g(\Phi_s)-g(\Phi_t)}=0,$$
which completes the proof.
\end{proof}

Below, we provide certain straightforward observations on the mean-square approximation of integrals involving $g(\Phi_t)$, which will be instrumental in proving Lemma~\ref{lem-c:6}.

\begin{lemma}\label{lem-c:3}
Suppose that $-\infty<S<T<\infty$, and that $\{\left(\s{0}{n},\s{1}{n},\ldots,\s{q_n}{n}\right)\}_{n\in\n}$ is a~sequence of partitions of $[S,T]$, i.e.,
$$
S=\s{0}{n}<\s{1}{n}<\ldots<\s{q_n}{n}=T\quad\text{for}\quad n\in\n.\vspace{-0.3cm}
$$
Further, let
$$\Delta t_n:=\max_{j\in\{1,\ldots,q_n\}}\Delta \s{j}{n}\quad \text{for}\quad n\in\n,\quad\text{with}\quad \Delta t_n(j):=\s{j}{n}-\s{j-1}{n},$$
and define the processes $\Psi:=\{\Psi_t\}_{t\in [S,T]}$ and $\xi^{(n)}:=\{\xi_t^{(n)}\}_{t\in[S,T]}$, $n\in\n$, by
\begin{equation}
\label{e:el_proc}
\Psi_t:=g(\Phi_t),\quad \xi_t^{(n)}:=\sum_{j=1}^{q_n}g\left(\Phi_{\s{j-1}{n}}\right)\mathbbm{1}_{\left[\s{j-1}{n},\,\s{j}{n}\right)}(t)\quad\text{for}\quad t\in [S,T].
\end{equation}
Then, for any $x\in X$ and $n\in\n$, 
\begin{equation}
\label{e:el_proc_conv}
\Lnorma{S,T}{x}{\xi^{(n)}-\Psi}^2\leq (T-S)\hspace{-0.3cm} \sup_{\left\{(s,t)\in [S,T]^2:\;\, |s-t|\leq\Delta t_n\right\} }\lnorma{x}{\Psi_s-\Psi_t}^2
\end{equation}
and
\begin{equation}
\label{e:el_proc_est}
\lnorma{x}{\int_S^T \xi_t^{(n)}\,dt-\int_S^T \Psi_t\,dt}^2\leq (T-S)\Lnorma{S,T}{x}{\xi^{(n)}-\Psi}^2.
\end{equation}
Moreover, if $\lim_{n\to\infty} \Delta t_n=0$, then, for every $x\in X$, the right-hand side of \eqref{e:el_proc_conv} goes to $0$, as $n\to\infty$, which, in particular, means that $\int_S^T g(\Phi_t)\,dt$ exists in the mean-square sense.
\end{lemma}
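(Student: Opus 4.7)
The first inequality, \eqref{e:el_proc_conv}, I would obtain by unfolding the $\mathcal{L}^2_{S,T}(\lambda\otimes\pr_x)$-norm via Fubini--Tonelli: for fixed $x\in X$ and $n\in\mathbb{N}$, the integrand is nonnegative and splits along the partition, so
\[
\Lnorma{S,T}{x}{\xi^{(n)}-\Psi}^2 = \int_S^T \lnorma{x}{\xi_t^{(n)}-\Psi_t}^2\,dt = \sum_{j=1}^{q_n}\int_{\s{j-1}{n}}^{\s{j}{n}} \lnorma{x}{\Psi_{\s{j-1}{n}}-\Psi_t}^2\,dt.
\]
On the $j$-th subinterval we have $|t-\s{j-1}{n}|\leq \Delta t_n(j)\leq \Delta t_n$, so each integrand is bounded by the supremum appearing on the right-hand side of \eqref{e:el_proc_conv}. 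Factoring that supremum out and using $\sum_j \Delta t_n(j)=T-S$ yields the claim.

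For \eqref{e:el_proc_est} I would apply the Cauchy--Schwarz inequality pathwise,
\[
\left(\int_S^T (\xi_t^{(n)}-\Psi_t)\,dt\right)^2 \leq (T-S)\int_S^T (\xi_t^{(n)}-\Psi_t)^2\,dt,
\]
and then take $\ew_x$ of both sides, invoking Fubini--Tonelli on the right to identify it with $(T-S)\Lnorma{S,T}{x}{\xi^{(n)}-\Psi}^2$.

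For the concluding statement, the essential input is Lemma \ref{lem-c:2}, according to which $[S,T]\ni t\mapsto g(\Phi_t)$ is continuous, and hence uniformly continuous (by compactness of $[S,T]$), into $\mathcal{L}^2(\pr_x)$. This immediately forces the supremum on the right-hand side of \eqref{e:el_proc_conv} to vanish as $\Delta t_n\to 0$. Combined with \eqref{e:el_proc_est}, this makes $\{\int_S^T \xi_t^{(n)}\,dt\}_{n\in\mathbb{N}}$ a Cauchy, and in fact convergent, sequence in $\mathcal{L}^2(\pr_x)$; since $g\circ\Phi$ is bounded and progressively measurable, the pathwise Lebesgue integral $\int_S^T g(\Phi_t)\,dt$ is well defined as an $\mathcal{L}^2(\pr_x)$-random variable, and by Fubini it is precisely the mean-square limit of the approximants, justifying the terminology. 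No step here presents a genuine obstacle, as Lemma \ref{lem-c:2} has already absorbed all the analytical content coming from assumptions \ref{cnd:a1}--\ref{cnd:a3}.
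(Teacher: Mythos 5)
Your proposal is correct and follows essentially the same route as the paper: the same partition-wise decomposition of the $\mathcal{L}^2_{S,T}(\lambda\otimes\pr_x)$-norm for \eqref{e:el_proc_conv}, an equivalent argument for \eqref{e:el_proc_est} (the paper phrases the pathwise step as Jensen's inequality for the normalized integral rather than Cauchy--Schwarz, which amounts to the same bound), and the same appeal to the uniform mean-square continuity from Lemma \ref{lem-c:2} for the final claim. Your closing remarks on the Cauchy property of the approximants and the identification of the limit with the pathwise integral are a slightly fuller justification of the existence statement than the paper gives, but nothing is amiss.
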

\begin{proof}
Let $x\in X$. Obviously, $\Psi,\,\xi^{(n)}\in \mathcal{L}^2_{S,T}(\lambda\otimes\pr_x)$ for all $n\in\n$, since $g$ is bounded, and the processes $\Phi$ and $\xi^{(n)}$ are progressively measurable (the latter owing to the right-continuity of its sample paths), making them jointly measurable.

Further, observe that for every $n\in\n$, we have
\begin{align*}
\Lnorma{S,T}{x}{\xi^{(n)} -\Psi }^2&=\int_S^T \ew_x\left[\left(\xi_t^{(n)} -\Psi_t  \right)^2\right]\,dt\\
&=\sum_{j=1}^{q_n} \int_{\s{j-1}{n}}^{\s{j}{n}}\lnorma{x}{\Psi_{\s{j-1}{n}} -\Psi_t }^2\,dt\\
&\leq \left(\max_{j\in\{1,\ldots,q_n\}}\sup_{t\in \left[\s{j-1}{n},\,\s{j}{n}\right]}\lnorma{x}{\Psi_{\s{j-1}{n}} -\Psi_t }^2\right)\sum_{j=1}^{q_n}\Delta\s{j}{n}\\
&\leq (T-S)\hspace{-0.3cm} \sup_{\left\{(s,t)\in [S,T]^2:\;\, |s-t|\leq \Delta t_n\right\} }\lnorma{x}{\Psi_s -\Psi_t }^2,
\end{align*}
which establishes \eqref{e:el_proc_conv}.

To show \eqref{e:el_proc_est}, it suffices to apply the integral form of Jensen's inequality and Fubini's theorem; namely, for every $n\in\n$,
\begin{align*}
\lnorma{x}{\int_S^T \xi_t^{(n)} \,dt-\int_S^T \Psi_t \,dt}^2&=\ew_x\left[\left(\int_S^T \left(\xi_t^{(n)} -\Psi_t  \right)dt\right)^2\right]\\
&=(T-S)^2\, \ew_x\left[\left(\frac{1}{T-S}\int_S^T \left(\xi_t^{(n)} -\Psi_t  \right)dt\right)^2\right]\\
&\leq (T-S)^2\, \ew_x\left[\frac{1}{T-S}\int_S^T \left(\xi_t^{(n)} -\Psi_t  \right)^2 dt\right]\\
&=(T-S)\int_S^T \ew_x\left[\left(\xi_t^{(n)} -\Psi_t  \right)^2\right]dt\\
&=(T-S)\Lnorma{S,T}{x}{\xi^{(n)} -\Psi }^2.
\end{align*}

Finally, suppose that $\lim_{n\to\infty} \Delta t_n=0$. Then the convergence of the right-hand side of~\eqref{e:el_proc_conv} to $0$ (as $n\to\infty$) follows directly from Lemma \ref{lem-c:2}. More specifically, since the function \hbox{$[S,T]\ni t\mapsto \Psi_t \in\mathcal{L}^2(\pr_x)$} is uniformly continuous, given an arbitrary $\varepsilon>0$, we can choose $\delta>0$ such that
$$\lnorma{x}{\Psi_s -\Psi_t }^2<\varepsilon\quad \text{for}\quad s,t\in [S,T]\quad \text{such that}\quad |s-t|<\delta.$$
Then, letting $n_0\in\n$ be large enough so that $\Delta t_n<\delta$ for all $n\geq n_0$, we obtain
$$\sup_{\left\{(s,t)\in [S,T]^2:\;\, |s-t|\leq \Delta t_n\right\} }\lnorma{x}{\Psi_s -\Psi_t }^2<\varepsilon\quad\text{for every}\quad n\geq n_0.$$
The proof of the lemma is now complete.
\end{proof}

To proceed with our reasoning, we need to introduce one more piece of notation. Namely, given any $n\in\n$ and $f:X^n\to\mathbb{R}$, for each $j\in\{1,\ldots,n\}$ define
$$\lip_j f:=\sup \left\{\lip f(x_1,\ldots,x_{j-1},\,\cdot\,,x_{j+1},\ldots x_n):\; x_1,\ldots,x_{j-1},x_{j+1},\ldots x_n\in X \right\}.$$
In other words, $\lip_j f<\infty$ if $f$ is Lipschitz continuous with respect to the $j$-th variable, with a Lipschitz constant independent of the fixed values of the other variables, and $\lip_j f=\infty$ otherwise.

The following result is a continuous and more subtle version of \cite[Lemma 1]{b:bolt_majewski}. It will play a crucial role in establishing the Lipschitz equicontinuity of the sequence of functions approximating $\mathcal{I}^{(i)}_{g,p,d}$ on a fixed compact set (see Lemma \ref{lem-c:8}), to be defined soon. Additionally, we will use it in the proof of the subsequent auxiliary lemma, which forms the foundation for constructing this sequence.

\begin{lemma}\label{lem-c:4}
Let $n\in\n\backslash\{1\}$, and suppose that $f\in BM(X^n)$ is such that $\lip_j f<\infty$ for every $j\in\{1,\ldots,n\}$. Further, let $0<s_1<s_2\ldots<s_n$, and define $F:X\to\mathbb{R}$ by
$$F(x):=\ew_x\left[f\left(\Phi_{s_1},\ldots,\Phi_{s_n} \right) \right]\quad\text{for any}\quad x\in X.$$
Then $$\lip F\leq \sum_{j=1}^n (\lip_j f) e^{-\gamma s_j}.$$
\end{lemma}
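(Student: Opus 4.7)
The approach is to use the iterated-integral representation of $F$ given by \eqref{e:Ex} and then propagate Lipschitz bounds through the $n$ successive kernels, picking up one contraction factor of the form $e^{-\gamma(s_{j+1}-s_j)}$ at each stage via hypothesis~\ref{cnd:a3} and the inequality $|\<h,\mu-\nu\>|\leq(\lip h)\,d_W(\mu,\nu)$, valid for $h\in L(X)$ and $\mu,\nu\in\mathcal{M}_{1,1}(X)$, recorded in Section~\ref{sec:1}.

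Concretely, I would set $F_n:=f$ and, for $j=n-1,n-2,\ldots,1$, define recursively
$$F_j(x_1,\ldots,x_j):=\int_X F_{j+1}(x_1,\ldots,x_j,x_{j+1})\,P_{s_{j+1}-s_j}(x_j,dx_{j+1}),$$
so that \eqref{e:Ex} rewrites as $F(x)=\int_X F_1(x_1)\,P_{s_1}(x,dx_1)$. For any $k<j$ the $k$-th variable enters $F_j$ only through $f$ (the kernels in the iterated integral defining $F_j$ do not depend on $x_k$), so a direct estimate gives $\lip_k F_j\leq \lip_k f$. In particular each $F_j$ is bounded and Lipschitz in its last coordinate with the others frozen, hence lies in $L(X)\cap BM(X)$ there, which is all that is needed to legitimately apply the $d_W$-inequality below.

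The central step is the one-step recursion
$$a_{j-1}\leq \lip_{j-1} f+a_j\,e^{-\gamma(s_j-s_{j-1})}\qquad(2\leq j\leq n),\qquad a_n=\lip_n f,$$
for $a_j:=\lip_j F_j$. Fixing $x_1,\ldots,x_{j-2}$ and two candidates $u,u'\in X$ for the $(j-1)$-th argument, I would split
$$F_{j-1}(\ldots,u)-F_{j-1}(\ldots,u')=\int_X\bigl[F_j(\ldots,u,x_j)-F_j(\ldots,u',x_j)\bigr]P_{s_j-s_{j-1}}(u,dx_j)+\<F_j(\ldots,u',\cdot),\,\delta_u P_{s_j-s_{j-1}}-\delta_{u'}P_{s_j-s_{j-1}}\>,$$
bound the first summand by $(\lip_{j-1} F_j)\,\rho(u,u')\leq(\lip_{j-1} f)\,\rho(u,u')$ using the preceding observation, and the second by $a_j\,d_W(\delta_u P_{s_j-s_{j-1}},\delta_{u'} P_{s_j-s_{j-1}})\leq a_j\,e^{-\gamma(s_j-s_{j-1})}\,\rho(u,u')$, via the $d_W$-inequality and \ref{cnd:a3}.

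Unrolling this recursion backwards from $a_n=\lip_n f$ produces $a_1\leq\sum_{k=1}^n(\lip_k f)\,e^{-\gamma(s_k-s_1)}$. A final application of the same two inequalities to $F(x)-F(y)=\<F_1,\,\delta_xP_{s_1}-\delta_y P_{s_1}\>$ then yields
$$|F(x)-F(y)|\leq a_1\,e^{-\gamma s_1}\rho(x,y)\leq\sum_{k=1}^n(\lip_k f)\,e^{-\gamma s_k}\,\rho(x,y),$$
which is exactly the claimed bound. I do not foresee any real conceptual obstacle here — the argument is essentially a careful bookkeeping of Lipschitz constants along the iterated integral — and the only point requiring attention is verifying at each step that the integrand against which $\delta_u P_{s_j-s_{j-1}}-\delta_{u'}P_{s_j-s_{j-1}}$ is paired truly is Lipschitz in its last variable, so that the $d_W$-inequality is applicable.
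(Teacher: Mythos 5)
Your proposal is correct and follows essentially the same route as the paper: the same backward recursion through the iterated-integral representation \eqref{e:Ex}, the same splitting of the one-step difference into an "integrand" term controlled by $\lip_{j-1}f$ and a "measure" term controlled by $d_W$ and \ref{cnd:a3}, and the same unrolling of the resulting recursion. The only difference is cosmetic indexing (the paper absorbs the final step $F(x)-F(y)=\<F_1,\delta_xP_{s_1}-\delta_yP_{s_1}\>$ into the recursion by introducing a dummy variable $x_0$ with $L_0=0$, whereas you treat it separately), which changes nothing of substance.
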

\begin{proof}
Put $s_0:=0$, $L_0:=0$ and $L_j:=\lip_j f$ for $j\in\{1,\ldots,n\}$. Further, for each \hbox{$k\in\{1,\ldots,n+1\}$}, define $F_k:X^k\to\mathbb{R}$ by setting 
$$F_{n+1}(x_0,\ldots,x_n):=f(x_1,\ldots,x_n),\vspace{-0.35cm}$$
and
$$F_k(x_0,\ldots,x_{k-1}):=\int_X\ldots\int_X f(x_1,\ldots,x_n)\,P_{s_n-s_{n-1}}(x_{n-1},dx_n)\ldots P_{s_k-s_{k-1}}(x_{k-1},dx_k)$$
with $s_0:=0$, if $k\leq n$. Then $F=F_1$ due to \eqref{e:Ex}, and we have
\begin{equation}
\label{e-c:6}
F_k(x_0,\ldots,x_{k-1})=\int_X F_{k+1}(x_0,\ldots,x_k)P_{s_k-s_{k-1}}(x_{k-1},dx_k)\quad\text{for}\quad k\in\{1,\ldots,n\}.
\end{equation}

We will estimate $\lip F=\lip_1 F_1$ by a recursive procedure. For this aim, first note that
\begin{equation}
\label{e-c:7}
\lip_{n+1} F_{n+1}=L_n,\quad\text{and}\quad \lip_j F_{j+1}\leq L_{j-1}\quad\text{for any}\quad j\in\{1,\ldots,n\}.
\end{equation}
To examine $\lip_j F_j$ for $j\leq n$, let $j\in\{1,\ldots,n\}$, $u,v\in X$ and, in the case where $j\geq 2$, also fix $x_0,\ldots,x_{j-2}\in X$. Then, by appealing to \eqref{e-c:6}, we can conclude that
\begin{align*}
&F_j(x_0,\ldots,x_{j-2},u)-F_j(x_0,\ldots,x_{j-2},v)\\
&=\int_X F_{j+1}(x_0,\ldots,x_{j-2},u,x_j)P_{s_j-s_{j-1}}(u,dx_j)-\int_X F_{j+1}(x_0,\ldots,x_{j-2},v,x_j)P_{s_j-s_{j-1}}(v,dx_j)\\
&=\int_X F_{j+1}(x_0,\ldots,x_{j-2},u,x_j)-F_{j+1}(x_0,\ldots,x_{j-2},v,x_j) P_{s_j-s_{j-1}}(u,dx_j)\\
&\quad+\int_X F_{j+1}(x_0,\ldots,x_{j-2},v,x_j)\left(P_{s_j-s_{j-1}}(u,dx_j)-P_{s_j-s_{j-1}}(v,dx_j) \right),
\end{align*}
where, in case of $j=1$, $F_j(x_0,\ldots,x_{j-2},u)$ and $F_{j+1}(x_0,\ldots,x_{j-2},u,x_j)$ should be interpreted as $F_1(u)$ and $F_2(u,x_1)$, respectively. From hypothesis \ref{cnd:a3} it now follows that
\begin{align*}
|F_j(x_0,\ldots,&x_{j-2},u)-F_j(x_0,\ldots,x_{j-2},v)|\\
&\leq \left(\lip_j F_{j+1}\right)\rho(u,v)+ \left(\lip_{j+1} F_{j+1} \right)d_W\left(\delta_u P_{s_j-s_{j-1}},\,\delta_v P_{s_j-s_{j-1}}\right)\\
&\leq \left(\lip_j F_{j+1}+\left(\lip_{j+1} F_{j+1}\right)e^{-\gamma(s_j-s_{j-1})}\right)\rho(u,v).
\end{align*}
Consequently, taking into account \eqref{e-c:7}, we see that
\begin{gather*}
\lip_n F_n\leq L_{n-1}+L_n e^{-\gamma(s_n-s_{n-1})},\\
\lip_j F_j\leq L_{j-1}+\left(\lip_{j+1} F_{j+1}\right) e^{-\gamma(s_j-s_{j-1})}\quad\text{for all}\quad j\in\{1,\ldots,n-1\}.
\end{gather*}
This finally leads to
\begin{align*}
\lip F&=\lip_1 F_1\leq L_0+\left(\lip_2 F_2\right) e^{-\gamma(s_1-s_0)}
\leq \left(L_1+ \left(\lip_3 F_3\right) e^{-\gamma(s_2-s_1)}\right)e^{-\gamma(s_1-s_0)}\\
&=L_1e^{-\gamma s_1}+\left(\lip_3 F_3\right)e^{-\gamma s_2}\leq\ldots\leq \sum_{k=1}^{n-2} L_k e^{-\gamma s_k} + \left(\lip_n F_n\right) e^{-\gamma s_{n-1}}\\
&\leq \sum_{k=1}^{n-2} L_k e^{-\gamma s_k} + \left(L_{n-1}+L_n e^{-\gamma(s_n-s_{n-1})}\right)e^{-\gamma s_{n-1}}=\sum_{k=1}^n L_k e^{-\gamma s_k},
\end{align*}
which ends the proof.
\end{proof}

\begin{lemma}\label{lem-c:5}
Let $K\subset X$ be a compact set, and let $T\in (0,\infty)$. Then, for every $\varepsilon>0$, there exists $\delta>0$ such that
$$\sup_{x\in K} \lnorma{x}{g(\Phi_s)-g(\Phi_t)}^2<\varepsilon\quad\text{for any}\quad s,t\in[0,T]\quad\text{satisfying}\quad |s-t|<\delta.$$
\end{lemma}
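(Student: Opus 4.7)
The plan is to upgrade the bound derived in the proof of Lemma \ref{lem-c:2} to one that is uniform over $x \in K$. Specifically, from \eqref{e-c:2} we have, for every $x \in X$ and $s,t\in[0,T]$,
$$\lnorma{x}{g(\Phi_s)-g(\Phi_t)}^2\leq \norma{g}_{BL}^2 \int_X h_{|t-s|}(u)\,(\delta_x P_{s\wedge t})(du),$$
where I set $h_\tau(u):=P_\tau L_u(u)$ for $\tau\geq 0$, $u\in X$. As already established in the course of that proof, the family $\{h_\tau:\tau\geq 0\}$ is uniformly bounded by $4$ and uniformly $8$-Lipschitz on $X$, while by \eqref{e-c:3} we have $h_\tau(u)\to 0$ as $\tau\to 0^+$ for every $u\in X$.

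Next, I would verify that the map $\Theta\colon K\times [0,T]\to (\mathcal{M}_1(X),d_{BL})$ defined by $\Theta(x,\sigma):=\delta_x P_\sigma$ is continuous: continuity in $\sigma$ for fixed $x$ is exactly Lemma \ref{lem-c:1}, while continuity in $x$ uniform in $\sigma$ follows from condition \ref{cnd:a3} combined with $d_{BL}\leq d_W$, which yields $d_{BL}(\delta_x P_\sigma,\delta_y P_\sigma)\leq \rho(x,y)$. By compactness of $K\times [0,T]$, the image $\mathcal{Q}:=\Theta(K\times [0,T])$ is then a compact subset of $(\mathcal{M}_1(X),d_{BL})$, and hence tight by Prokhorov's theorem.

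Now fix $\varepsilon>0$, assuming without loss of generality that $\norma{g}_{BL}>0$. Tightness of $\mathcal{Q}$ produces a compact set $K_1\subset X$ with $(\delta_x P_\sigma)(X\setminus K_1)<\varepsilon/(8\norma{g}_{BL}^2)$ for all $x\in K$ and $\sigma\in[0,T]$. Since $\{h_\tau\}_{\tau\geq 0}$ is uniformly $8$-Lipschitz and $h_\tau(u)\to 0$ pointwise as $\tau\to 0^+$, a standard finite $\eta$-net argument on the compact set $K_1$ yields a $\delta>0$ such that $\sup_{u\in K_1}h_\tau(u)<\varepsilon/(2\norma{g}_{BL}^2)$ for every $\tau\in[0,\delta)$. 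Splitting the integral in the basic estimate over $K_1$ and its complement, and exploiting the global bound $h_\tau\leq 4$ on the latter, one gets, for all $x\in K$ and all $s,t\in[0,T]$ with $|t-s|<\delta$,
$$\lnorma{x}{g(\Phi_s)-g(\Phi_t)}^2\leq \norma{g}_{BL}^2\left(\sup_{u\in K_1} h_{|t-s|}(u)+4(\delta_x P_{s\wedge t})(X\setminus K_1)\right)<\varepsilon,$$
which is precisely the claim.

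The main obstacle is converting the \emph{merely pointwise} convergence $h_\tau(u)\to 0$ into an estimate uniform over $x\in K$, despite the measures $\delta_x P_{s\wedge t}$ not being supported on any fixed compact set. This difficulty is resolved by combining the tightness coming from the joint continuity of $\Theta$ (which itself rests on both \ref{cnd:a1} and \ref{cnd:a3}) with the uniform Lipschitz regularity of the family $\{h_\tau\}$, which turns pointwise convergence into uniform convergence on any compact set in $X$.
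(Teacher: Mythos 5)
Your proof is correct, but it follows a genuinely different route from the paper's. The paper works at the level of the function $x\mapsto F_{s,t}(x):=\lnorma{x}{g(\Phi_s)-g(\Phi_t)}^2=\ew_x\left[f(\Phi_s,\Phi_t)\right]$ with $f(u,v)=(g(u)-g(v))^2$: it invokes Lemma \ref{lem-c:4} (which is needed later for Lemma \ref{lem-c:8} anyway) to conclude that the family $\{F_{s,t}:s,t\geq 0\}$ is uniformly Lipschitz on $X$ with a constant $L$ independent of $s,t$, then takes a finite $\varepsilon/(2L)$-net of $K$ and applies Lemma \ref{lem-c:2} only at the finitely many net points. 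You instead work at the level of the integrand in \eqref{e-c:2}: you exploit the uniform boundedness and uniform Lipschitz continuity of $\{h_\tau\}_{\tau\ge0}$ (both indeed already extracted in the proof of Lemma \ref{lem-c:2}, where the $8$-Lipschitz estimate is derived for arbitrary $\tau=t-s_n\geq 0$) to upgrade the pointwise convergence $h_\tau\to 0$ to uniform convergence on compacts, and you control the mass escaping any compact set by proving joint continuity of $(x,\sigma)\mapsto\delta_xP_\sigma$ (Lemma \ref{lem-c:1} plus the $d_{BL}\le d_W$ contraction from \ref{cnd:a3}) and then tightness of the compact image via Prokhorov's theorem. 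Both arguments ultimately rest on the same equicontinuity supplied by \ref{cnd:a3} together with the pointwise limits supplied by \ref{cnd:a1}; the paper's version is shorter because it reuses Lemma \ref{lem-c:4} and avoids any tightness considerations, while yours is self-contained relative to the proof of Lemma \ref{lem-c:2} at the cost of an extra appeal to Prokhorov's theorem and to Lemma \ref{lem-c:1}. All the individual steps you use (the $d_{BL}\le d_W$ comparison, compactness of the continuous image, tightness of a weakly compact set of measures on a Polish space, and the finite-net upgrade from pointwise to uniform convergence for a uniformly Lipschitz family) are sound, so no gap remains.
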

\begin{proof}
Fix $\varepsilon>0$, and let $f(u,v):=(g(u)-g(v))^2$ for $(u,v)\in X^2$. Then, we can write
$$
F_{s,t}(x):=\lnorma{x}{g(\Phi_s)-g(\Phi_t)}^2=\ew_x\left[f(\Phi_s,\,\Phi_t) \right]\quad \text{for}\quad x\in X,\;s,t\geq 0.
$$
It is easy to check that $\lip_j f\leq 4\norma{g}_{\infty}\lip g$ for $j\in\{1,2\}$. Hence, Lemma \ref{lem-c:4} implies that
\begin{equation}
\label{e-c:8}
\lip F_{s,t}\leq 4\norma{g}_{\infty}(\lip g)\left(e^{-\gamma s}+e^{-\gamma t}\right)\leq 8\norma{g}_{BL}^2+1=:L<\infty\quad \text{for all}\quad s,t\geq 0.
\end{equation}
Since $K$ is compact and $K\subset \bigcup_{x\in K} B(x,\,\varepsilon/(2L))$, there exist $x_1,\ldots,x_m\in K$ such that
$$K\subset\bigcup_{j=1}^m B\left(x_j, \frac{\varepsilon}{2L}\right).$$
On the other hand, from Lemma \ref{lem-c:2}, we know that $[0,T]\ni t \mapsto g(\Phi_t)\in\mathcal{L}^2(\pr_x)$ is uniformly continuous, and, therefore, we can choose $\delta>0$ for which
\begin{equation}
\label{e-c:8b}
\max_{j\in\{1,\ldots,m\}} F_{s,t}(x_j)<\frac{\varepsilon}{2}\quad \text{for any}\quad s,t\in[0,T]\quad\text{satisfying}\quad |s-t|<\delta.
\end{equation}
Now, if $x\in K$, then $x\in B(x_j,\varepsilon/(2L))$ for some $j\in\{1,\ldots,m\}$, and thus, given \eqref{e-c:8} and~\eqref{e-c:8b}, we can deduce, that for any $s,t\in [0,T]$ such that $|s-t|<\delta$, 
\begin{align*}
F_{s,t}(x)\leq |F_{s,t}(x)-F_{s,t}(x_j)|+F_{s,t}(x_j)<L\rho(x,x_j)+\frac{\varepsilon}{2}\leq L\cdot\frac{\varepsilon}{2L}+\frac{\varepsilon}{2}=\varepsilon,
\end{align*}
which completes the proof.
\end{proof}

Let us now fix $p\in\mathbb{N}\cup\{\infty\}$, $d\in [0,\infty)$, $i\in\{0,1\}$, and an arbitrary compact set \hbox{$K\subset X$}. From now on, for notational simplicity, we will write $\mathcal{I}$ and $Z_n$ instead of $\I{i}{p}{d}$ and $Z_n(g)$, respectively.
 
As announced at the beginning of this section, we want to construct a double sequence of Lipschitz continuous functions (with the same Lipschitz constant) that would converge to~$\mathcal{I}$ on the set $K$. To this end, for every $n\in\n$, choose $\delta_K(n)>0$ such that
\begin{equation}
\label{def:H1}
\sup_{x\in K} \lnorma{x}{g(\Phi_s)-g(\Phi_t)}^2<\frac{1}{n^4}\;\;\;\text{for any}\;\;\; s,t\in[0,n]\;\;\text{satisfying}\;\; |s-t|<\delta_K(n).
\end{equation}
Obviously, this can be done due to Lemma \ref{lem-c:5}. Further, let $\{\left(\s{0}{n},\s{1}{n},\ldots,\s{q_n}{n}\right)\}_{n\in\n}$ be a sequence of partitions of $[0,1]$
such that
\begin{equation}
\label{def:H2}
\Delta t_n:=\max_{j\in\{1,\ldots,q_n\}}\Delta \s{j}{n}<\delta_K(n)\quad\text{for all}\quad n\in\n.
\end{equation}
Finally, let
\begin{equation}
\label{def:H3}
\widehat{Z}_n:=\chi_g(\Phi_n)-\chi_g(\Phi_{n-1})+\sum_{j=1}^{q_n} g\left(\Phi_{\s{j-1}{n}+n-1}\right)\Delta\s{j}{n}\quad\text{for}\quad n\in\n,
\end{equation}
and, for any $n,N\in\n$, define
\begin{equation}
\label{def:H4}
\wH(x):=\ew_x\left[\left|\min_{k\in\{n,\ldots,n+N\}}\left(\frac{1}{k}\sum_{l=1}^k\left(\widehat{Z}_{2l+i}^2\wedge p \right) \right)-d\right|\wedge 1 \right]\quad\text{for}\quad x\in X.
\end{equation}

We are now prepared to establish the final two results, which will ultimately allow us to complete the proof of Lemma~\ref{lem:5}.

\begin{lemma}\label{lem-c:6}
Let $\wH$, $n,N\in\n$, be the functions determined by \eqref{def:H1}-\eqref{def:H4}. Then
$$\lim_{n\to\infty}\lim_{N\to\infty} \wH(x)=\i(x)\quad\text{for all}\quad x\in K.$$
\end{lemma}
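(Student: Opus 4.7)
The plan is to recast both $\mathcal{I}(x)$ and $\wH(x)$ in the form $\ew_x[\psi(\cdot)]$ with $\psi(u):=|u-d|\wedge 1$ (which is $1$-Lipschitz and bounded by~$1$), then pass the two nested limits by dominated convergence, and finally identify the innermost random variable with $\psi(\liminf_k a_k)$. Setting $a_k:=\tfrac{1}{k}\sum_{l=1}^k(Z_{2l+i}^2\wedge p)$ and $\hat a_k:=\tfrac{1}{k}\sum_{l=1}^k(\widehat{Z}_{2l+i}^2\wedge p)$, one has $\mathcal{I}(x)=\ew_x\psi(\liminf_k a_k)$ and $\wH(x)=\ew_x\psi(\min_{n\le k\le n+N}\hat a_k)$.

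First I would establish an $L^2$-bound for the discretization error $A_n:=\widehat{Z}_n-Z_n$: applying Lemma~\ref{lem-c:3} on $[S,T]=[n-1,n]$ (after shifting the process by $n-1$) to the partition chosen in \eqref{def:H2}, together with the defining property of $\delta_K(n)$ in \eqref{def:H1}, yields $\lnorma{x}{A_n}^2\le 1/n^4$ for every $x\in K$. Markov's inequality and Borel--Cantelli then give $\sum_n A_n^2<\infty$ (in particular $A_n\to 0$) $\pr_x$-a.s. The two DCT passages are now routine: for fixed $n$, $\min_{n\le k\le n+N}\hat a_k\downarrow\inf_{k\ge n}\hat a_k$ as $N\to\infty$, so $\lim_N\wH(x)=\ew_x\psi(\inf_{k\ge n}\hat a_k)$; then $\inf_{k\ge n}\hat a_k\uparrow\liminf_k\hat a_k$ as $n\to\infty$, which gives $\lim_n\lim_N\wH(x)=\ew_x\psi(\liminf_k\hat a_k)$.

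What remains is to prove $\liminf_k\hat a_k=\liminf_k a_k$ $\pr_x$-a.s., which reduces to showing $|\hat a_k-a_k|\to 0$ $\pr_x$-a.s. For $p<\infty$ this is immediate: the map $u\mapsto u^2\wedge p$ is $2\sqrt p$-Lipschitz on $\mathbb{R}$, so $|\hat a_k-a_k|\le\tfrac{2\sqrt p}{k}\sum_{l=1}^k|A_{2l+i}|$; since $\sum_l\lnorma{x}{A_l}\le\sum_l 1/l^2<\infty$, one has $\sum_l|A_l|<\infty$ $\pr_x$-a.s.\ and the Cesaro average vanishes. For $p=\infty$ I would expand $\widehat{Z}_l^2-Z_l^2=2Z_lA_l+A_l^2$: the $A_l^2$ term is handled as before, while for the cross sum Kronecker's lemma reduces the task to $\sum_l Z_lA_l/l<\infty$ $\pr_x$-a.s., which via $\ew_x|Z_lA_l|\le\lnorma{x}{Z_l}\lnorma{x}{A_l}\le\lnorma{x}{Z_l}/l^2$ is ensured once $\sum_l\lnorma{x}{Z_l}/l^3<\infty$.

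The hard part will be obtaining a polynomial-growth bound on $\lnorma{x}{Z_l}$ in $l$ for $x\in K$ in the case $p=\infty$. My plan is to start from $|Z_l|\le|\chi_g(\Phi_l)|+|\chi_g(\Phi_{l-1})|+\norma{g}_\infty$ combined with $|\chi_g|\le C(V+1)$ (from \eqref{e:P_tg(x)} and Remark~\ref{rem:lip_chi}), which reduces the matter to controlling $P_lV^2(x)$ uniformly for $x\in K$. This I would approach by applying \ref{cnd:a3} to the Lipschitz truncations $V^2\wedge M$ (with Lipschitz constant $2\sqrt M$), invoking $\<V^2,\mu_*\><\infty$ and the invariance of $\mu_*$, and then passing $M\to\infty$ carefully enough to retain a useful estimate. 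Once such a bound is secured, the argument above closes and yields the desired identification of the two $\liminf$-s, completing the proof.
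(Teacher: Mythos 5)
Your reduction to the $L^2$ bound $\lnorma{x}{\widehat{Z}_m-Z_m}\le m^{-2}$ for $x\in K$, and the two dominated-convergence passages identifying $\lim_n\lim_N\wH(x)$ with $\ew_x\bigl[\,|\liminf_k\hat a_k-d|\wedge 1\,\bigr]$, are both correct and match the paper (which delegates the latter step to a cited lemma). The $p<\infty$ case also closes as you describe. The genuine gap is in the case $p=\infty$: your cross-term argument needs $\sum_l \lnorma{x}{Z_l}/l^3<\infty$, hence a polynomial bound on $\ew_x[Z_l^2]$, i.e.\ on $P_lV^2(x)$, for arbitrary $x\in K$. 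The hypotheses do not supply this. Condition \ref{cnd:a3} only guarantees that $P_t$ preserves finite \emph{first} moments, and the higher-moment control of the paper (Remark \ref{rem:4}, Lemma \ref{lem:3}) is available only under $\pr_\mu$ and $\pr_{\mu_*}$, via \ref{cnd:a4} and the invariance of $\mu_*$ -- not under $\pr_x$. So $\ew_x[Z_l^2]$ may simply be infinite. Your proposed repair does not close either: truncating gives
$$\<V^2\wedge M,\,\delta_x P_t\>\;\le\;\<V^2,\mu_*\>+2\sqrt{M}\,C e^{-\gamma t}\bigl(V(x)+1\bigr),$$
and since the left-hand side increases to $P_tV^2(x)$ as $M\to\infty$ while the right-hand side diverges for each fixed $t$, no finite bound survives the limit. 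There is no way to trade $M$ against $t$ here because the monotone limit in $M$ must be taken at fixed $t$.

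The paper's proof circumvents this entirely by never comparing $Z_l^2$ and $\widehat{Z}_l^2$ without a finite cutoff. Because of the outer $|\cdot-d|\wedge 1$, each partial sum inside the minimum may be replaced by its minimum with $k(1+d)$ (the identity preceding \eqref{e-c:10}); then the elementary inequality $|\alpha_1^2\wedge\beta-\alpha_2^2\wedge\beta|\le 2\sqrt{\beta}\,|\alpha_1-\alpha_2|$ with $\beta=k(1+d)$ turns the quadratic comparison into a linear one with constant $2\sqrt{k(1+d)}/k\to 0$. This yields $|\H(x)-\wH(x)|\le \tfrac{2(1+d)}{\sqrt{n}}\sum_l \lnorma{x}{Z_{2l+i}-\widehat{Z}_{2l+i}}$, uniformly in $N$, which vanishes as $n\to\infty$ using only the $m^{-4}$ bound you already established -- no moments of $Z_l$ under $\pr_x$ are ever needed, and $p=\infty$ is handled on the same footing as $p<\infty$. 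You should replace your $p=\infty$ argument with this truncation device (or an equivalent one); as it stands, that case of your proof cannot be completed from the stated assumptions.
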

\begin{proof}
Define
$$
\wi(x):=\ew_x\left[\left|\liminf_{n\to\infty}\left(\frac{1}{n}\sum_{k=1}^n\left(\widehat{Z}_{2k+i}^2   \wedge p \right) \right)-d \right|\wedge 1 \right]
\quad\text{for}\quad x\in X.
$$
It is not hard to verify (see the beginning of the proof of \cite[Lemma 4.2]{b:czapla-strassen}) that
\begin{equation}
\label{e-c:12}
\lim_{n\to\infty}\lim_{N\to\infty} \wH(x)=\wi(x)\quad\text{for all} \quad x\in X.
\end{equation}
Similarly, letting
$$
\H(x):=\ew_x\left[\left|\min_{k\in\{n,\ldots,n+N\}}\left(\frac{1}{k}\sum_{l=1}^k\left(Z_{2l+1}^2  \wedge p \right) \right)-d\right|\wedge 1 \right]\quad\text{for}\quad x\in X,
$$
we have
$$\lim_{n\to\infty}\lim_{N\to\infty} \H(x)=\i(x)\quad\text{for all} \quad x\in X.$$
It therefore follows that
\begin{equation}
\label{e-c:9}
\left|\i(x)-\wi(x)\right|=\lim_{n\to\infty}\lim_{N\to\infty} \left|\H(x)-\wH(x)\right|\quad\text{for all}\quad x\in X.
\end{equation}
Moreover, note that, because of the identity
$$\left|\min_{j\in J} \alpha_j-\beta\right|\wedge 1=\left|\min_{j\in J} \left((1+\beta)\wedge\alpha_j\right)-\beta\right|\wedge 1\quad\text{for}\quad \alpha_j\in \mathbb{R},\;j\in J,\;\beta\in\mathbb{R},$$
where $J\neq \emptyset$ is a finite set, the functions $\H$ and $\wH$ can be equivalently expressed as
\begin{gather}
\begin{split}
\label{e-c:10}
\H(x)=\ew_x\left[\left|\min_{k\in\{n,\ldots,n+N\}} \frac{1}{k}\left(k(1+d)\wedge\sum_{l=1}^k \left( Z_{2l+i}^2  \wedge p\right) \right) -d\right|\wedge 1 \right],\\
\wH(x)=\ew_x\left[\left|\min_{k\in\{n,\ldots,n+N\}} \frac{1}{k}\left(k(1+d)\wedge\sum_{l=1}^k \left( \widehat{Z}_{2l+i}^2  \wedge p\right) \right) -d\right|\wedge 1 \right].
\end{split}
\end{gather}
We will show that $\i|_{K}=\wi|_{K}$, which, due to \eqref{e-c:12}, will conclude the proof.

Let $x\in K$ and $n,N\in\n$. Keeping in mind \eqref{e-c:10} and using Birkhoff's inequality
$$
\bigl|\, a\wedge c - b\wedge c\,\bigr|\ \leq\ \bigl|\, a -b\, \bigr|\quad\text{for}\quad a,b,c\in\mathbb{R},
$$
together with its more general form
\begin{equation}
\label{e:min1}
\left|\min_{j\in J} a_j- \min_{j\in J} b_j  \right|\leq \max_{j\in I}|a_j-b_j|,
\end{equation}
valid for any finite set $J\neq\emptyset$, with $a_j,b_j\in\mathbb{R}$, $j\in J$, and the inequality
\begin{equation}
\label{e:min2}
\left|\left( c\wedge \sum_{j\in J} a_j\right)- \left( c\wedge \sum_{j\in J} b_j\right) \right|\leq \sum_{j\in J} \left|(a_j\wedge c) - (b_j\wedge c) \right|,
\end{equation}
satisfied for any finite set $J\neq\emptyset$, with $a_j,b_j\in[0,\infty)$, $j\in J$, and $c\in\mathbb{R}$, we see that
\begin{align*}
&\left|\H(x) - \wH(x)\right|\\
&\leq \ew_x\left[\max_{k\in\{n,\ldots,n+N\}}\frac{1}{k}\left|\left(k(1+d)\wedge\sum_{l=1}^k \left( Z_{2l+i}^2  \wedge p\right)\right)- \left(k(1+d)\wedge\sum_{l=1}^k \left( \widehat{Z}_{2l+i}^2  \wedge p\right) \right) \right| \right]\\
&\leq \ew_x\left[\max_{k\in\{n,\ldots,n+N\}} \frac{1}{k}\sum_{l=1}^k \left| \left(Z_{2l+i}^2  \wedge p \wedge k(1+d)\right)- \left(\widehat{Z}_{2l+i}^2  \wedge p \wedge k(1+d)\right)\right| \right]\\
&\leq \ew_x\left[\max_{k\in\{n,\ldots,n+N\}} \frac{1}{k}\sum_{l=1}^k \left| \left(Z_{2l+i}^2   \wedge k(1+d)\right)- \left(\widehat{Z}_{2l+i}^2  \wedge k(1+d)\right)\right| \right].
\end{align*}
This, together with the fact that
\begin{equation}
\label{e:min3}
\left|\alpha_1^2\wedge \beta - \alpha_2^2\wedge \beta\right|\leq 2\sqrt{\beta}|\alpha_1-\alpha_2|\quad\text{for any}\quad \alpha_1,\alpha_2\in\mathbb{R},\; \beta\geq 0,
\end{equation}
and the H\"older inequality, further gives
\begin{align}
\begin{split}
\label{e-c:11}
\left|\H(x) - \wH(x)\right|&\leq \ew_x\left[\max_{k\in\{n,\ldots,n+N\}}\frac{2\sqrt{(1+d)}}{\sqrt{k}}\sum_{l=1}^k\left| Z_{2l+i}  -\widehat{Z}_{2l+i}   \right|\right]\\
&\leq \frac{2(1+d)}{\sqrt{n}}\sum_{l=1}^{n+N} \ew_x\left| Z_{2l+i}  -\widehat{Z}_{2l+i}   \right|\\
&\leq \frac{2(1+d)}{\sqrt{n}}\sum_{l=1}^{n+N} \left(\ew_x\left[ \left(Z_{2l+i}  -\widehat{Z}_{2l+i}  \right)^2 \right]\right)^{1/2}.
\end{split}
\end{align}

Now, it remains to appropriately estimate the sum on the right-hand side of \eqref{e-c:11}. To do this, define $\hs{j}{m}:=\s{j}{m}+m-1$ for $j\in\{1,\ldots,q_m\}$, $m\in\n$, and
$$
\Psi_t  :=g(\Phi_t),\quad \xi_t^{(m)}  :=\sum_{j=1}^{q_m}g\left(\Phi_{\hs{j-1}{m}}\right)\mathbbm{1}_{\left[\hs{j-1}{m},\,\hs{j}{m}\right)}(t)\quad\text{for}\quad t\geq 0,\; m\in\n.
$$
Clearly, for every $m\in\n$, $(\hs{1}{m},\ldots,\hs{q_m}{m})$ is a partition of $[m-1,m]$, $\Delta \hat{t}_m=\Delta t_m$, and
$$\sum_{j=1}^{q_m}g\left(\Phi_{\hs{j-1}{m}}\right)\Delta \s{j}{m}=\int_{m-1}^m \xi_t^{(m)}  \,dt.$$
Hence, according to Lemma \ref{lem-c:3}, we obtain
\begin{align}
\begin{split}
\label{e-c:13}
\ew_x\left[\left( Z_m  -\widehat{Z}_m  \right)^2 \right]&=\lnorma{x}{\int_{m-1}^m \Psi_t  \,dt-\int_{m-1}^m \xi_t^{(m)}  \,dt}^2\leq \Lnorma{m-1,m}{x}{\Psi-\xi^{(m)}}^2\\
&\leq \sup_{\left\{(s,t)\in [m-1,m]^2:\;\, |s-t|\leq \Delta t_m\right\} }\lnorma{x}{\Psi_t  -\Psi_s  }^2\\
&\leq \sup_{y\in K}\;\sup_{\left\{(s,t)\in [0,m]^2:\;\, |s-t|<\delta_K(m)\right\} }\lnorma{y}{\Psi_t  -\Psi_s  }^2\leq \frac{1}{m^4},
\end{split}
\end{align}
where last two inequalities follow sequentially from \eqref{def:H2} and \eqref{def:H1}.

Finally, taking into account \eqref{e-c:11} and \eqref{e-c:13}, along with the fact that $x\in K$ and $n,N\in\n$ were arbitrary, we infer that
$$\limsup_{N\to\infty}\left|\H(x) - \wH(x)\right|\leq \frac{2(1+d)}{\sqrt{n}}\sum_{l=1}^{\infty}\frac{1}{(2l+i)^2}\quad \text{for all}\quad x\in K,\; n\in\n.$$
This, in light of \eqref{e-c:9} and the convergence of the series on the right-hand side, proves that \(\i|_{K} = \wi|_{K}\), thus completing the proof.
\end{proof}

\begin{remark}\label{rem-c:lip}
Suppose that $f\in L(X)$, and let \hbox{$\alpha,\beta\in [0,\infty)$}. Then
$\bar{f}:=\alpha\wedge \left(f^2+\beta\right)$ is Lipschitz continuous with $\lip \bar{f}\leq 2\sqrt{\alpha}\lip f$. To see this, it suffices to apply inequalities \eqref{e:min2} (with $J=\{1,2\}$, $a_2=b_2$) and \eqref{e:min3} successively. Indeed, for any $x,y\in X$, we get
$$
|\bar{f}(x)-\bar{f}(y)|\leq |\alpha\wedge f^2(x)- \alpha \wedge f^2(y)|
\leq 2\sqrt{\alpha} |f(x)-f(y)| \leq 2\sqrt{\alpha}\left(\lip f\right)\rho(x,y).
$$
\end{remark}

\begin{lemma}\label{lem-c:8}
The functions $\wH$, $n,N\in\n$, determined by \eqref{def:H1}-\eqref{def:H4}, are Lipschitz continuous with a common Lipschitz constant.
\end{lemma}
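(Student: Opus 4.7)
The plan is to express $\wH(x)=\ew_x[\tilde F(\Phi_{s_1},\ldots,\Phi_{s_M})]$ for suitable ordered times $0<s_1<\cdots<s_M$ and a bounded measurable function $\tilde F$ not depending on $x$, and then invoke Lemma~\ref{lem-c:4} to obtain
\[
\lip \wH \ \leq\ \sum_{j=1}^M (\lip_j \tilde F)\,e^{-\gamma s_j}.
\]
The proof then reduces to bounding each $\lip_j \tilde F$ and checking that the resulting sum admits an upper bound depending only on $g$, $\gamma$ and $d$. To legitimize the first step, note that $\widehat Z_1$ never enters the sums defining $\wH$ in~\eqref{def:H4} (since $2l+i\geq 2$ for $l\geq 1$) and no $\widehat Z_{2l+i}$ with $l\geq 1$ involves $\Phi_0=x$, so $\tilde F$ is indeed independent of $x$. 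The relevant $s_j$'s are the integers $1+i,2+i,\ldots,2(n+N)+i$ together with the internal partition points $\hat t_r(m)$, $r=1,\ldots,q_m-1$, for each $m=2l+i$, $l=1,\ldots,n+N$. Crucially, since the intervals $[m-1,m]$ associated with distinct $l$'s are pairwise disjoint, perturbing a single coordinate $y_j$ alters at most one increment $\widehat Z_{2l(j)+i}$, where $l(j)$ denotes the unique such $l$.

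To control $\lip_j \tilde F$, I would work with the equivalent form of $\wH$ obtained by inserting $k(1+d)\wedge$ before the inner sum, in the spirit of~\eqref{e-c:10}; this is essential in order to treat $p=\infty$ on an equal footing with $p<\infty$. Applying inequalities~\eqref{e:birk}, \eqref{e:min1}, \eqref{e:min2} and~\eqref{e:min3}, in the same order as in the proof of Lemma~\ref{lem-c:6}, then yields
\[
\lip_j \tilde F \ \leq\ \frac{2\sqrt{1+d}}{\sqrt{\max(n,l(j))}}\cdot\lip_j \widehat Z_{2l(j)+i},
\]
the factor $1/\sqrt{\max(n,l(j))}$ arising as $\max\{1/\sqrt k:\ n\leq k\leq n+N,\ k\geq l(j)\}$. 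From~\eqref{def:H3} and Remark~\ref{rem:lip_chi}, the Lipschitz constant $\lip_j \widehat Z_m$ (with $m=2l(j)+i$) is at most $(\lip g)/\gamma+\lip g$ when $y_j\in\{\Phi_{m-1},\Phi_m\}$, and at most $(\lip g)\,\Delta t_{r+1}(m)$ when $y_j=\Phi_{\hat t_r(m)}$ is an internal partition point.

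Summing $(\lip_j \tilde F)\,e^{-\gamma s_j}$ over all $s_j\in[m-1,m]$ and using the telescoping bound $\sum_{r=1}^{q_m-1}\Delta t_{r+1}(m)\leq 1$ produces a per-interval estimate of the form
\[
\sum_{j:\ s_j\in[m-1,m]}(\lip_j \tilde F)\,e^{-\gamma s_j}\ \leq\ \frac{K_g\,e^{-\gamma(m-1)}}{\sqrt{\max(n,l)}}
\]
with a constant $K_g$ depending only on $g$, $\gamma$ and~$d$. Summing over $l=1,\ldots,n+N$ with $m=2l+i$ finally gives a bound on $\lip \wH$ dominated by a convergent geometric series $\sum_{l\geq 1}e^{-2\gamma l}$, hence a Lipschitz constant independent of $n$ and~$N$, as desired.

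The main obstacle will be the case $p=\infty$: since $z\mapsto z^2$ is unbounded in Lipschitz norm on $\mathbb{R}$, a naive approach fails outright. The $k(1+d)\wedge$ truncation inserted via~\eqref{e-c:10}, combined with the $1/k$-averaging that produces the uniformly bounded factor $2\sqrt{1+d}/\sqrt k$, the disjointness of the intervals $[m-1,m]$ (ensuring only one $\widehat Z_{2l+i}$ is perturbed per coordinate), and the telescoping $\sum_r\Delta t_{r+1}(m)\leq 1$, together yield the common Lipschitz bound. The role of hypothesis~\ref{cnd:a3} in its strong form ($c=1$) is equally crucial here: it is precisely what supplies the geometric weight $e^{-\gamma s_j}$ in Lemma~\ref{lem-c:4}, whose summability closes the argument.
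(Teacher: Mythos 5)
Your proposal is correct and follows essentially the same route as the paper's proof: representing $\wH(x)$ as $\ew_x$ of a fixed bounded function of the process at the (positive, ordered) endpoint and partition times, invoking Lemma~\ref{lem-c:4}, handling $p=\infty$ via the $k(1+d)\wedge$ truncation of \eqref{e-c:10} combined with \eqref{e:birk}, \eqref{e:min1}--\eqref{e:min3}, exploiting the disjointness of the intervals $[2l+i-1,2l+i]$ so that each coordinate perturbs only one $\widehat Z_{2l+i}$, and closing with the telescoping bound on the partition increments and the geometric decay $e^{-\gamma s_j}$ supplied by \ref{cnd:a3}. The only (immaterial) differences are that the paper bounds $2\sqrt{(1+d)/k}\le 2(d+1)$ uniformly rather than keeping your $1/\sqrt{\max(n,l(j))}$, and controls the partition-point sum by comparison with the lower Riemann sum of $\int e^{-\gamma t}\,dt$ instead of your cruder but equally sufficient estimate $e^{-\gamma s_j}\le e^{-\gamma(m-1)}$.
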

\begin{proof}
For every $k\in\n$, put
$$
m_0:=0,\quad m_k:=q_{2+i}+q_{4+i}+\ldots+q_{2k+i}+k,
\vspace{-0.1cm}
$$
and define $h_k:X^{m_k}\to\mathbb{R}$ by
\begin{align}
\label{e-c:14}
\begin{split}
&h_k\left(x_1,\ldots,x_{m_k}\right)\\
&\hspace{-0.1cm}:=\frac{1}{k}\left(k(1+d)\wedge \sum_{l=1}^k \left(\left(\chi_g(x_{m_l})-\chi_g(x_{m_{l-1}+1})+\sum_{j=1}^{q_{2l+i}}g\left(x_{m_{l-1}+j}\right)\Delta \s{j}{2l+i}\right)^2\hspace{-0.2cm}\wedge p\right)\right)\hspace{-0.1cm}.
\end{split}
\end{align}
Further, fix $n,N\in\n$, and let $f_{n,N}:X^{m_{n+N}}\to\mathbb{R}$ be given by
\begin{equation}
\label{e-c:15}
f_{n,N}\left(x_1,\ldots,x_{m_{n+N}}\right):=\left|\min_{k\in\left\{n,\ldots,n+N\right\}}h_k\left(x_1,\ldots,x_{m_k}\right)-d \right|\wedge 1.
\end{equation}
Moreover, recall that
$$\widehat{Z}_{2l+i}:=\chi_g(\Phi_{2l+i})-\chi_g(\Phi_{2l+i-1})+\sum_{j=1}^{q_{2l+i}} g\left(\Phi_{\s{j-1}{2l+i}+2l+i-1}\right)\Delta\s{j}{2l+i}\quad\text{for}\quad l\in\n.$$
Then, taking into account \eqref{e-c:10}, it is easily seen that $\wH$ can be expressed as
\begin{align}
\arraycolsep=2pt
\label{e-c:16}
\begin{split}
\wH(x)=\ew_xf_{n,N}\big(
\begin{array}[t]{ccccc}
\Phi_{2+i-1}, & \Phi_{\s{1}{2+i}+2+i-1} &\ldots & \Phi_{\s{q_{2+i}-1}{2+i}+2+i-1}, & \Phi_{2+i},\\
\Phi_{4+i-1}, &\Phi_{\s{1}{4+i}+4+i-1} & \ldots & \Phi_{\s{q_{4+i}-1}{4+i}+4+i-1},& \Phi_{4+i},\\
\vdots & \vdots &  & \vdots & \vdots\\
\Phi_{2M+i-1}, & \Phi_{\s{1}{2M+i}+2M+i-1} & \ldots &\Phi_{\s{q_{2M+i}-1}{2M+i}+2M+i-1},& \Phi_{2M+i}\;\,\big),
\end{array}
\end{split}
\end{align}
where $M:=n+N$, and the consecutive subscripts of $\Phi$ in the $l$-th row (where \hbox{$l\in\{1,\ldots,M\}$}) correspond to variables with indices $$m_{l-1}+1,\, m_{l-1}+2,\ldots, m_{l-1}+q_{2l+i}, \, m_{l-1}+q_{2l+i}+1=m_l.$$

We shall now investigate the Lipschitz continuity of the mappings  $h_k$, $k\in\n$, with respect to their individual variables. To this end, let $k\in\n$. Since each variable of $h_k$ appears in only one term of the sum over $l$, involved in \eqref{e-c:14}, it follows that the map $x\mapsto h_k(x_1,\ldots,x_{r-1},x,x_{r+1},\ldots,x_{m_k})$ (where $x_q$ are fixed for $q\neq r$) takes one of the following three forms (under the convention that $\sqrt{p}:=\infty$ if $p=\infty$):
\begin{gather*}
x\mapsto \frac{1}{k}\left(k(1+d) \wedge \left(\left(\left|-\chi_g(x)+g(x)\Delta\s{1}{2l+i}+c\right|\wedge \sqrt{p}\right)^2+\beta \right) \right)\quad\text{if}\quad r=m_{l-1}+1,\\
x\mapsto \frac{1}{k}\left(k(1+d) \wedge \left(\left(\left|g(x)\Delta\s{j}{2l+i}+c\right|\wedge \sqrt{p}\right)^2+\beta \right) \right)\quad\text{if}\quad r=m_{l-1}+j,\\
x\mapsto \frac{1}{k}\left(k(1+d) \wedge \left(\left(\left|\chi_g(x)+c\right|\wedge \sqrt{p}\right)^2+\beta \right) \right)\quad\text{if}\quad r=m_l,
\end{gather*}
where $l\in\{1,\ldots,k\}$,  $j\in\{2,\ldots,q_{2l+i}\}$, and $c\in\mathbb{R}$, $\beta\geq 0$ are certain constants depending on the fixed values of variables $x_q$, $q\neq r$. Consequently, letting 
$$L:=2(d+1)(\lip\chi_g+\lip g)$$
(which is finite by Remark \ref{rem:lip_chi}), and referring to Remark \ref{rem-c:lip}, we can conclude that, for every~$l\in\{1,\ldots,k\}$,
\begin{align*}
\lip_{m_{l-1}+1} h_k& \leq \frac{2}{k}\sqrt{k(d+1)}\left(\lip\chi_g+\Delta\s{1}{2l+i}\lip g\right)\leq 2(d+1)\left(\lip\chi_g+\Delta\s{1}{2l+i}\lip g\right)\\
&\leq L\left(1+\Delta\s{1}{2l+i} \right),
\end{align*}
and, analogously,
$$
\lip_{m_{l-1}+j} h_k\leq 2(1+d)\Delta\s{j}{2l+i}\lip g\leq L\Delta \s{j}{2l+i}\quad\text{when}\quad j\in\left\{2,\ldots,q_{2l+i} \right\},$$
$$
\lip_{m_l} h_k \leq 2(1+d)\lip\chi_g \leq L.$$

Having established the above and applying \eqref{e:min1}, which entails
$$\lip_r f_{n,N}\leq \max\left\{\lip_r h_k:\; n\leq k\leq n+N,\; m_k\geq r  \right\}  \quad\text{for}\quad r\in\{1,\ldots, m_{n+N}\},$$
we infer that, for any \hbox{$l\in\{1,\ldots, n+N\}$} and $j\in\{2,\ldots,q_{2l+i}\}$,
$$
\lip_{m_{l-1}+1} f_{n,N}\leq L\left(1+\Delta\s{1}{2l+i} \right), \;\; \lip_{m_{l-1}+j} f_{n,N}\leq L\Delta \s{j}{2l+i},\;\;\text{and}\;\;  \lip_{m_l} f_{n,N}\leq L.
$$

Finally, from \eqref{e-c:16} and Lemma \ref{lem-c:4} it follows that
\begin{align*}
\lip \wH &\leq \sum_{l=1}^{n+N} \left(\lip_{m_{l-1}+1} f_{n,N}\right)\exp\left(-\gamma(2l+i-1)\right)\\
&\quad+\sum_{l=1}^{n+N}\sum_{j=2}^{q_{2l+i}} \left(\lip_{m_{l-1}+j} f_{n,N}\right)\exp\left(-\gamma\left(\s{j-1}{2l+i}+2l+i-1 \right)\right)\\
&\quad+\sum_{l=1}^{n+N} \left(\lip_{m_l} f_{n,N}\right)\exp\left(-\gamma(2l+i)\right),
\end{align*}
which, further gives
\begin{align*}
\lip \wH &\leq L\left(\sum_{l=1}^{n+N} \left(1+\Delta\s{1}{2l+i} \right)\exp\left(-\gamma(2l+i-1)\right)\right.\\
&\quad\left.+\sum_{l=1}^{n+N}\sum_{j=2}^{q_{2l+i}} \Delta \s{j}{2l+i}\exp\left(-\gamma\left(\s{j-1}{2l+i}+2l+i-1 \right)\right)\right.\\
&\quad \left.+\sum_{l=1}^{n+N} \exp\left(-\gamma(2l+i)\right)\right)\leq L\left(A_{n,N}+B_{n,N} \right),
\end{align*}
where
\begin{gather*}
A_{n,N}:=\sum_{l=1}^{n+N} \left(2 \exp\left(-\gamma(2l+i-1)\right)+\exp\left(-\gamma(2l+i)\right) \right),
\\ B_{n,N}:= \sum_{l=1}^{n+N}\sum_{j=1}^{q_{2l+i}} \Delta \s{j}{2l+i}\exp\left(-\gamma\left(\s{j-1}{2l+i}+2l+i-1 \right)\right).
\end{gather*}
Clearly,
$$A_{n,N}\leq 3\sum_{l=1}^{\infty} \exp\left(-\gamma(2l-1)\right)=\frac{3 e^{\gamma}}{e^{2\gamma}-1}.$$
To estimate $B_{n,N}$, note first that, for any $l\in\{1,\ldots,n+N\}$ and $j\in\{1,\ldots,q_{2l+i}\}$,
\begin{align*}
-\gamma\left(\s{j-1}{2l+i}+2l+i-1 \right)&=\gamma \Delta \s{j}{2l+i}-\gamma\left(\s{j}{2l+i}+2l+i-1 \right)\\
&\leq \gamma -\gamma\left(\s{j}{2l+i}+2l+i-1 \right).
\end{align*}
This shows that, for every $l\in\{1,\ldots,n+N\}$,
\begin{align*}
\sum_{j=1}^{q_{2l+i}} \Delta \s{j}{2l+i}&\exp\left(-\gamma\left(\s{j-1}{2l+i}+2l+i-1 \right)\right)\\
&\leq e^{\gamma} \sum_{j=1}^{q_{2l+i}} \Delta \s{j}{2l+i}
\exp\left(-\gamma\left(\s{j}{2l+i}+2l+i-1 \right)\right)\leq e^{\gamma} \int_{2l+i-1}^{2l+i} e^{-\gamma t}\,dt,
\end{align*}
where the last inequality follows from the fact that the middle sum is a lower Riemann sum for the function \hbox{$[2l+i-1,\,2l+i]\ni t\mapsto e^{-\gamma t}$}.
Thus, we obtain
$$
B_{n,N}\leq e^{\gamma} \int_{2+i-1}^{2(n+N)+i} e^{-\gamma t}\,dt\leq e^{\gamma} \int_0^{\infty} e^{-\gamma t}\,dt=\frac{e^{\gamma}}{\gamma}.
$$

We have therefore shown that
$$\lip \wH\leq Le^{\gamma}\left(\frac{{3}}{e^{2\gamma}-1}+\frac{1}{\gamma}\right)\quad\text{for all}\quad n,N\in\n,$$
which ends the proof.
\end{proof}

\begin{proof}[Proof of Lemma \ref{lem:5}]
Let $K\subset X$ be an arbitrary compact set, and let $\{\wH:\; n,N\in\n\}$ stand for the family of functions specified by \eqref{def:H1}-\eqref{def:H4}. Then from Lemmas \ref{lem-c:6} and \ref{lem-c:8}, it follows that
$$\lip\,\i|_{K}\leq \sup_{(n,N)\in\n^2} \lip \wH<\infty.$$
This shows that $\mathcal{I}$ is (Lipschitz) continuous on every compact subset of $X$, which clearly implies the continuity of $\mathcal{I}=\I{i}{p}{d}$. As mentioned earlier, the continuity of  $\mathcal{S}^{(i)}_{g,p,d}$ can be established through reasoning similar to that presented in this section.
\end{proof}

\section{On representations of $\sigma_g^2$}\label{sec:5}
Let us now return to the issue of representing the asymptotic variance of $\{I_t(g)\}_{t\geq 0}$, mentioned at the end of Section \ref{sec:2} (just before Theorem \ref{thm:main}). In \cite[Theorem 5.1]{b:czapla-CLT-cont}, we have demonstrated that~$\sigma_g^2$, given by~\eqref{def:sigma} (or, more concisely, by \eqref{e:sigma_short}), can be expressed as
$\sigma_g^2=2\<\chi_g g,\mu_*\>$. The proof of that result relies directly on condition \ref{cnd:a1}, while the remaining assumptions serve merely to ensure the existence of a unique invariant distribution. Within the framework considered here, this identity thus holds under hypotheses \ref{cnd:a1}-\ref{cnd:a4}. Consequently, it remains to verify the second equality in \eqref{e:sigma_eq}.

Before presenting the proof, let us first state a remark regarding the so-called weak-* mean ergodicity of the semigroup under consideration, which will play an essential role in the argument.
\begin{remark}\label{rem:weak_erg}
Under hypotheses \ref{cnd:a2}-\ref{cnd:a4} the semigroup $\{P_t\}_{t\geq 0}$ is weak-* mean ergodic, that is,
$$\frac{1}{t}\int_0^t \nu P_s\,ds\stackrel{w}{\to} \mu_*, \quad\text{as}\quad t\to\infty,\quad\text{for all}\quad \nu\in\mathcal{M}_1(X).$$
Indeed, conditions \ref{cnd:a2} and \ref{cnd:a3} guarantee that $\{P_t\}_{t\geq 0}$ is Feller, and that the family \hbox{$\{P_t f:\; t\geq 0\}$} is equicontinuous for every $f\in L(X)$. Hence, in view of \cite[Corollary 5.3]{b:szarek_worm}, it suffices to show that there exists $z\in X$ such that, for every $\varepsilon>0$ and some $\alpha(\varepsilon)>0$,
\begin{equation}
\label{e:we}
\limsup_{t\to\infty} \frac{1}{t}\int_0^t \delta_x P_s(B(z,\varepsilon))\,ds\geq \alpha(\varepsilon)\quad\text{for all}\quad x\in X.
\end{equation}
This, however, follows from the asymptotic stability of $\{P_t\}_{t\geq 0}$ (established in Proposition~\ref{prop:1}). To see this, take $z\in \operatorname{supp}\mu_*$, fix an arbitrary $\varepsilon>0$, and define $\alpha(\varepsilon):=\mu_*(B(z,\varepsilon))/2>0$. Then, by the portmanteau theorem (\cite[Theorem 2.1]{b:billingsley}), we get
$$\liminf_{s\to\infty} \delta_x P_s (B(z,\varepsilon))\geq \mu_*(B(z,\varepsilon))>\alpha(\varepsilon)\quad\text{for all}\quad x\in X.
$$
This yields that, for any given $x\in X$, there exists $t_x> 0$ such that \hbox{$\delta_x P_s (B(z,\varepsilon))>\alpha(\varepsilon)$} for all $t\geq t_x$. Consequently,
$$\frac{1}{t}\int_0^t \delta_x P_s (B(z,\varepsilon))\,ds\geq \frac{1}{t}\int_{t_x}^t \delta_x P_s (B(z,\varepsilon))\,ds\geq \left(1-\frac{t_x}{t}\right)\alpha(\varepsilon) \quad \text{for}\quad t\geq t_x,$$
which, in particular, implies \eqref{e:we}.
\end{remark}

\begin{proposition}
Suppose that conditions \ref{cnd:a2}-\ref{cnd:a4} are satisifed. Then 
\begin{equation}
\label{def2:sigma}
\lim_{t\to \infty}\frac{\ew_{\mu}\left[I_t^2(g) \right]}{t}=2\<g\chi_g,\mu_*\>,
\end{equation}
with $I_t(g)$ defined by \eqref{def:I}.
\begin{proof}
First of all, note that  $g\chi_g$ is continuous (as $\chi_g\in L(X)$ due to Remark \ref{rem:lip_chi}) and that, by Remark \ref{rem:4},
$$\sup_{t\ge 0}\<|g\chi_g|^2,\,\mu P_t\>\leq \norma{g}_{\infty}^2 \sup_{t\geq 0} \ew_{\mu}\left[\chi_g^2(\Phi_t) \right]<\infty.$$
Accordingly, in light of \cite[Lemma 8.4.3]{b:bogachev}) (cf. also \cite[Lemma 2.1]{b:czapla-CLT-cont}), the weak-* mean ergodicity of the semigroup $\{P_t\}_{t\geq 0}$ (see Remark \ref{rem:weak_erg}) implies that
$$\lim_{t\to\infty}\frac{1}{t}\int_0^t\<g\chi_g,\,\mu P_u\>du=\<g\chi_g,\,\mu_*\>.$$
Hence, it suffices to prove that
$$\lim_{t\to\infty} \left|\frac{2}{t}\int_0^t\<g\chi_g,\,\mu P_u\>du-\frac{1}{t}\ew_{\mu}\left[I_t^2(g)\right]\right|=0.$$

Let us define
$$\chi_g^{(t)}(x):=\int_0^t P_s g(x)\,ds\quad\text{for}\quad x\in X,\; t\geq 0.$$
Then, keeping in mind \eqref{e:int_square}, we can conclude that, for any $x\in X$ and $t\geq 0$,
\begin{align*}
\ew_x\left[I_t^2(g)\right]&= 2\int_0^t \int_0^s P_u(g\cdot P_{s-u}g)(x)\,du\,ds 
=2\int_0^t \int_u^t P_u(g\cdot P_{s-u}g)(x)\,ds\,du\\
&=2\int_0^t \int_0^{t-u} P_u(g\cdot P_v g)(x)\,dv\,du=2\int_0^t P_u\left(g\cdot \chi_g^{(t-u)}\right)(x)\,du,
\end{align*}
which further gives
\begin{align*}
\ew_{\mu}\left[I_t^2(g)\right]&=\int_X \ew_x\left[I_t^2(g)\right]\mu(dx)=2\int_0^t \<g \chi_g^{(t-u)} ,\, \mu P_u\>du.
\end{align*}
For every $t>0$, we can therefore write
\begin{equation}
\label{e-s1}
\left|\frac{2}{t}\int_0^t\<g\chi_g,\,\mu P_u\>du-\frac{1}{t}\ew_{\mu}\left[I_t^2(g)\right]\right|\leq \frac{2\norma{g}_{\infty}}{t}\int_0^t \<\left|\chi_g- \chi_g^{(t-u)}\right|,\,\mu P_u \> du.
\end{equation}

Now, fix an arbitrary $\varepsilon>0$. From \eqref{e:P_tgx}, it follows that
\begin{align*}
\left|\chi_g(x)- \chi_g^{(s)}(x)\right|&\leq \int_s^{\infty} |P_u g(x)|\,du
\leq C(\lip g)\left(\int_s^{\infty} e^{-\gamma u}\,du \right)(V(x)+1)\\
&=D e^{-\gamma s}(V(x)+1)\quad\text{for all}\quad x\in X,\;s\geq 0,
\end{align*}
with $D:=(C\lip g)/\gamma$. Consequently,
\begin{equation}
\label{e-s2}
\<\left|\chi_g- \chi_g^{(s)}\right|,\,\mu P_u  \>\leq De^{-\gamma s}\left(\<V,\mu P_u\>+1\right)\leq De^{-\gamma s}(c_1(\mu)+1)\quad \text{for all}\quad s,u\geq 0,
\end{equation}
and $c_1(\mu)<\infty$ by condition \ref{cnd:a4} (see Remark \ref{rem:2}). Thus we can choose $T>0$ so large that
\begin{equation}
\label{e-s3}
\<\left|\chi_g- \chi_g^{(s)}\right|,\,\mu P_u  \><\frac{\varepsilon}{4(\norma{g}_{\infty}+1)}\quad\text{for all}\quad s\geq T,\; u\geq 0.
\end{equation}
Finally, letting
$$t_0:=T\max\left\{1,\;\frac{4(\norma{g}_{\infty}+1)}{\varepsilon}D(c_1(\mu)+1)\right\},$$
and taking into account \eqref{e-s1}, \eqref{e-s2}, and \eqref{e-s3}, we infer that, for every $t>t_0$,
\begin{align*}
&\left|\frac{2}{t}\int_0^t\<g\chi_g,\,\mu P_u\>du-\frac{1}{t}\ew_{\mu}\left[I_t^2(g)\right]\right|\\
&\leq \frac{2\norma{g}_{\infty}}{t}\left(\int_0^{t-T} \<\left|\chi_g- \chi_g^{(t-u)}\right|,\,\mu P_u \>du +\int_{t-T}^t \<\left|\chi_g- \chi_g^{(t-u)}\right|,\,\mu P_u \> du.\right)\\
&\leq \frac{2\norma{g}_{\infty}}{t}\left(\frac{\varepsilon(t-T)}{4(\norma{g}_{\infty}+1)} +TD(c_1(\mu)+1)\right)\\
&=2\norma{g}_{\infty}\left(\frac{\varepsilon}{4(\norma{g}_{\infty}+1)}\left(1-\frac{T}{t}\right)+TD(c_1(\mu)+1)\frac{1}{t}\right)<\frac{\norma{g}_{\infty}\varepsilon}{\norma{g}_{\infty}+1}<\varepsilon.
\end{align*}
Since $\varepsilon$ was arbitrary, the proof is now complete.
\end{proof}
\end{proposition}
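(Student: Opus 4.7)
The plan is to expand $\ew_{\mu}[I_t^2(g)]$ as an integral against $\mu P_u$ of $g$ times a \emph{truncated corrector}, compare it to the analogous integral of $g\chi_g$ (exploiting the exponential tail of $P_s g$), and then pass to the limit via the weak-$*$ mean ergodicity of $\{P_t\}_{t\geq 0}$ established in Remark~\ref{rem:weak_erg}. Concretely, I would set
$$\chi_g^{(s)}(x):=\int_0^s P_u g(x)\,du\quad\text{for}\quad x\in X,\;s\geq 0,$$
and start from the identity obtained in the proof of Lemma~\ref{lem:1} (see~\eqref{e:int_square}),
$$\ew_x[I_t^2(g)]=2\int_0^t\int_0^s P_u(g\cdot P_{s-u}g)(x)\,du\,ds.$$
The estimate in~\eqref{e:P_tg(x)} together with Remark~\ref{rem:2} ensures absolute integrability, so Fubini's theorem lets me swap the order of integration in $(u,s)$ to get $\ew_x[I_t^2(g)] = 2\int_0^t P_u(g\cdot\chi_g^{(t-u)})(x)\,du$. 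Integrating against $\mu$ then yields the working identity
$$\ew_{\mu}[I_t^2(g)]=2\int_0^t \<g\chi_g^{(t-u)},\,\mu P_u\>\,du.$$

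Next, I would show that replacing $\chi_g^{(t-u)}$ by $\chi_g$ produces an error that is negligible after dividing by $t$. Integrating~\eqref{e:P_tg(x)} over $[s,\infty)$ gives $|\chi_g(x)-\chi_g^{(s)}(x)|\leq De^{-\gamma s}(V(x)+1)$ for some constant $D>0$, so
$$\left|\<g(\chi_g-\chi_g^{(s)}),\,\mu P_u\>\right|\leq \norma{g}_{\infty}De^{-\gamma s}(c_1(\mu)+1)\quad\text{for all}\quad s,u\geq 0,$$
where $c_1(\mu)<\infty$ by Remark~\ref{rem:2}. Splitting $[0,t]$ into $[0,t-T]$ and $[t-T,t]$, the first piece contributes at most $\mathcal{O}(e^{-\gamma T})$ to $\frac{1}{t}\int_0^t \<g(\chi_g-\chi_g^{(t-u)}),\mu P_u\>\,du$ uniformly in $t$, while the second is bounded by a fixed constant times $T/t$. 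Sending $t\to\infty$ first and $T\to\infty$ afterwards, I conclude that
$$\lim_{t\to\infty}\left|\frac{1}{t}\ew_{\mu}[I_t^2(g)]-\frac{2}{t}\int_0^t\<g\chi_g,\,\mu P_u\>\,du\right|=0.$$

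Finally, to handle the Cesàro average of $\<g\chi_g,\mu P_u\>$, I would invoke the weak-$*$ mean ergodicity from Remark~\ref{rem:weak_erg}. The function $g\chi_g$ is continuous (since $\chi_g\in L(X)$ by Remark~\ref{rem:lip_chi}) and, owing to Remark~\ref{rem:4} and the boundedness of $g$, satisfies $\sup_{u\geq 0}\<|g\chi_g|^2,\mu P_u\><\infty$, which provides the uniform integrability required to push a merely continuous (rather than $BL$) test function through the weak-$*$ mean convergence; this is the content of \cite[Lemma 8.4.3]{b:bogachev}. It follows that $\frac{1}{t}\int_0^t\<g\chi_g,\mu P_u\>\,du\to\<g\chi_g,\mu_*\>$ as $t\to\infty$, and combining this with the previous step delivers~\eqref{def2:sigma}.

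The main obstacle I anticipate is the last step: weak-$*$ mean ergodicity is stated for bounded continuous test functions, and while $g\chi_g$ is continuous, it is not known a priori to be bounded. The key insight is that a uniform second-moment bound of $g\chi_g$ under the orbit $\{\mu P_u\}_{u\geq 0}$ (immediate from Remark~\ref{rem:4}) supplies exactly the uniform integrability needed to upgrade weak convergence to convergence of the integral, so the only genuinely non-routine input is tracking that this moment bound survives along the whole orbit — which is already packaged in our assumption \ref{cnd:a4}.
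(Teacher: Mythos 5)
Your proposal is correct and follows essentially the same route as the paper's proof: the same working identity $\ew_{\mu}[I_t^2(g)]=2\int_0^t\langle g\chi_g^{(t-u)},\mu P_u\rangle\,du$ obtained from \eqref{e:int_square} via Fubini, the same exponential tail bound on $\chi_g-\chi_g^{(s)}$ with the same split of $[0,t]$ into $[0,t-T]$ and $[t-T,t]$, and the same appeal to weak-$*$ mean ergodicity together with the uniform second-moment bound from Remark~\ref{rem:4} and \cite[Lemma 8.4.3]{b:bogachev}. The only difference is cosmetic (iterated limits in $t$ then $T$ versus the paper's explicit $\varepsilon$--$T$--$t_0$ bookkeeping).
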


\section{An Application to an SDE with Dissipative Drift and Additive Noise}\label{sec:6}

As announced earlier, we finalize the paper with a brief note concerning the application of Theorem \ref{thm:main} to the model studied in \cite[§6.1]{b:komorowski}.

Let $(X,\<\cdot|\cdot\>)$ be a separable Hilbert space with an orthonormal basis \hbox{$\{e_n:\,n\in\n\}$}, and let $\mathcal{L}(X)$ denote the space of bounded linear operators on $X$, endowed with the operator norm. 

Consider a strongly continuous, analytic semigroup of operators  $\{S_t\}_{t\geq 0}\subset \mathcal{L}(X)$ with the infinitesimal generator $A:D(A)\to X$. Analyticity here means that $\{S_t\}_{t\geq 0}$ extends holomorphically in time to a sector of the complex plane. More precisely, there exist an open sector \hbox{$\Sigma:=\{z\in\mathbb{C}\backslash\{0\}:\;|\arg z|<\theta\}$}, with a fixed $\theta\in (0,\pi]$, and a family \hbox{$\{\bar{S}_z\}_{z\in \cl \Sigma}\subset \mathcal{L}(X)$} such that the mapping $\Sigma \ni z \mapsto \bar{S}_z \in\mathcal{L}(X)$ is analytic, $\bar{S}_t=S_t$ for each real $t\geq 0$, $\bar{S}_{w+z}=\bar{S}_w\circ\bar{S}_z$ for any $w,z \in\cl \Sigma$ satisfying $w+z\in \cl \Sigma$, and for every $x\in X$ the function $z\mapsto \bar{S}_z(x)$ is continuous at $z=0$.

Further, let \hbox{$\mathbb{W}:=\{\mathbb{W}_t\}_{t\geq 0}$} stand for an $X$-valued Wiener process of the form 
$$\mathbb{W}_t:=\sum_{n=1}^{\infty} \beta_n W_t^{(n)}e_n,\quad t\geq 0,$$
where $\{W_t^{(n)}\}_{t\geq 0}$, $n\in\n$, are independent, standard, one-dimensional Wiener processes adapted to a common filtration $\{\mathcal{G}_t\}_{t\geq 0}$, and $\{\beta_n\}_{n\in\n}$ is a square-summable sequence of real numbers. It can be easily shown that the convariance operator of $\mathbb{W}$, i.e., the unique bounded linear operator $Q:X\to X$ satisfying 
$$\ew[\<\mathbb{W}_s\,|\,x\>\<\mathbb{W}_t\,|\,y\>]=(s\wedge t)\<Qx\,|\,y
\>\quad\text{for any}\quad s,t\geq 0, \;x,y\in X,$$
is positive, self-adjoint, trace-class, and given explicitly by
$$Qx=\sum_{n=1}^{\infty} \beta_n^2\<x\,|\,e_n\>e_n\quad\text{for}\quad x\in X.$$ 

Additionally, define $\{\mathbb{W}^A_t\}_{t\geq 0}$ to be the so-called \emph{convolution process} (\cite[§5.1.2]{b:prato}), i.e.,
\vspace{-0.19cm}
$$\mathbb{W}^A_t:=\int_0^t S_{t-s}\,d\mathbb{W}_s\quad\text{for}\quad t\geq 0,$$
where the integral on the right-hand side can be interpreted as $\sum_{n=1}^{\infty}\int_0^t S_{t-s}(\beta_n e_n)\,dW_s^{(n)}$, understood in the $\mathcal{L}^2$ sense. It is well-known that $\{\mathbb{W}^A_t\}_{t\geq 0}$, defined in this way, is a Gaussian process with $X$-continuous sample paths. 

Finally, suppose we are given an arbitrary Lipschitz continuous function $F:X\to X$.

Within the above framework, one may investigate the following initial-value problem for an Itô SDE with dissipative drift and additive noise:
\begin{equation}\label{e:3}
d\Psi_t = \left(A(\Psi_t)+F(\Psi_t)\right)dt+d\mathbb{W}_t,\quad \Psi_0\sim \mu,\quad\text{where}\quad \mu\in \mathcal{M}_1(X).
\end{equation}
An $X$-valued, $\{\mathcal{G}_t\}_{t\geq 0}$-adapted process $\Psi=\{\Psi_t\}_{t\geq 0}$ with continuous sample paths and initial distribution $\mu$ is called a \emph{mild solution} to this problem (see \cite[p. 81]{b:prato}) whenever
$$
\Psi_t=S_t(\Psi_0)+\int_0^t S_{t-s} F(\Psi_s)\,ds+\mathbb{W}^A_t\quad\text{for}\quad t\geq 0,\;\;\text{a.s.}
$$

Following \cite{b:komorowski}, we now introduce the following conditions:

\begin{enumerate}[label=\textnormal{(C\arabic*)}, leftmargin=*]
\item \label{cnd:c1} There exists $\gamma_1\in\mathbb{R}$ such that $\norma{e^{\gamma_1 t} S_t}\leq 1$ for every $t\geq 0$;
\item \label{cnd:c2} There exists $\gamma_2\in\mathbb{R}$ such that 
$$\<F(x)-F(y)\,|\,x-y\>\leq -\gamma_2\norma{x-y}^2\quad\text{for any}\quad x,y\in X;$$
\item \label{cnd:c3} $\displaystyle\sup_{t\geq 0} \int_0^t \operatorname{tr}S_s^*QS_s\,ds<\infty,$
\end{enumerate}
where $\operatorname{tr} T:=\sum_{n=1}^{\infty}\<Te_n\,|\,e_n\>$ for any trace-class operator $T\in\mathcal{L}(X)$. In particular, the last hypothesis implies that $\sup_{t\geq 0} \ew[\|\mathbb{W}^A_t\|^2]<\infty$.

From \hbox{\cite[Theorem 5.5.11]{b:prato}} it follows that, if \ref{cnd:c1}-\ref{cnd:c3} hold with $\gamma_1+\gamma_2>0$, then for each $\mu\in\mathcal{M}_1(X)$ there exists a unique mild solution $\{\Psi_t^{(\mu)}\}_{t\geq 0}$ to \eqref{e:3}. It is also known that these solutions form a family of Markov processes, which corresponds to a Markov-Feller semigroup $\{H_t\}_{t\geq 0}$, so that 
$$\pr\left(\Psi_t^{(\mu)}\in A\right)=\mu H_t(A) \quad\text{for any}\quad \mu\in\mathcal{M}_1(X),\;A\in\mathcal{B}(X),\;t\geq 0.$$
Furthermore, under the same assumptions, it is shown in \cite[§6.1]{b:komorowski} that $\{H_t\}_{t\geq 0}$ satisfies hypotheses \ref{cnd:a1}-\ref{cnd:a4} with $\gamma:=\gamma_1+\gamma_2$ and any measure $\mu\in\mathcal{M}_{1,\zeta}(X)$, where $\zeta>2$. This, combined with Theorem \ref{thm:main}, finally leads us to the following conclusion:

\begin{proposition}
Suppose that hypotheses \ref{cnd:c1}-\ref{cnd:c3} hold with $\gamma_1,\gamma_2$ satisfying $\gamma_1+\gamma_2>0$, and let $\zeta>2$. Then $\{H_t\}_{t\geq 0}$ admits a unique invariant probability measure $\mu_*$, which belongs to $\mathcal{M}_{1,\zeta}(X)$. Moreover, for any $g\in BL(X)$ with \hbox{$\<g,\mu_*\>=0$},
$$\sigma_g^2:=\lim_{t\to\infty} \frac{1}{t}\ew\left[\left(\int_0^t g\left(\Psi_s^{(\mu)}\right)ds\right)^2\right]$$
is well-defined and finite, and whenever it is positive, the LIL holds for the process $$\left\{\int_0^t g\left(\Psi_s^{(\mu)}\right)\,ds\right\}_{t\geq 0},$$ with $\sigma_g^2$ serving as its asymptotic variance.
\end{proposition}

\section*{Acknowledgements}
The work of D.C. and H.W.-Ś. was supported by the University of Silesia in Katowice through the programme ``Freedom of Research | in the City of Science'' (a call within the “Research Excellence Initiative”), an important part of the strategic activities carried out under the framework of the European City of Science Katowice 2024. Part of this article was prepared at the Faculty of Physics and Applied Mathematics of the Gdańsk University of Technology, whose hospitality is gratefully acknowledged.

\section*{Competing interests}
The authors have nothing to declare.

\section*{Funding}
This research did not receive any specific grant from funding agencies in the public, commercial, or not-for-profit sectors.

\section*{Declarations of interest}
None.

\bibliographystyle{elsarticle-harv}
\footnotesize
\bibliography{ReferencesDatabase}

\begin{thebibliography}{35}
\expandafter\ifx\csname natexlab\endcsname\relax\def\natexlab#1{#1}\fi
\providecommand{\url}[1]{\texttt{#1}}
\providecommand{\href}[2]{#2}
\providecommand{\path}[1]{#1}
\providecommand{\DOIprefix}{doi:}
\providecommand{\ArXivprefix}{arXiv:}
\providecommand{\URLprefix}{URL: }
\providecommand{\Pubmedprefix}{pmid:}
\providecommand{\doi}[1]{\href{http://dx.doi.org/#1}{\path{#1}}}
\providecommand{\Pubmed}[1]{\href{pmid:#1}{\path{#1}}}
\providecommand{\bibinfo}[2]{#2}
\ifx\xfnm\relax \def\xfnm[#1]{\unskip,\space#1}\fi
\bibitem[{Bena{\"i}m et~al.(2015)Bena{\"i}m, Le~Borgne, Malrieu and
  Zitt}]{benaim1}
\bibinfo{author}{Bena{\"i}m, M.}, \bibinfo{author}{Le~Borgne, S.},
  \bibinfo{author}{Malrieu, F.}, \bibinfo{author}{Zitt, P.A.},
  \bibinfo{year}{2015}.
\newblock \bibinfo{title}{Qualitative properties of certain piecewise
  deterministic {M}arkov processes}.
\newblock \bibinfo{journal}{Ann. Inst. H. Poincar\'e Probab. Statist.}
  \bibinfo{volume}{51}, \bibinfo{pages}{1040--1075}.
\newblock \DOIprefix\doi{10.1214/14-AIHP619}.
\bibitem[{Bhattacharya(1982)}]{b:bhattacharya}
\bibinfo{author}{Bhattacharya, R.N.}, \bibinfo{year}{1982}.
\newblock \bibinfo{title}{On the functional central limit theorem and the law
  of the iterated logarithm for {M}arkov processes}.
\newblock \bibinfo{journal}{Z. Wahrscheinlichkeitstheor. Verw. Geb.}
  \bibinfo{volume}{60}, \bibinfo{pages}{185--201}.
\newblock \DOIprefix\doi{10.1007/BF00531822}.
\bibitem[{Billingsley(1999)}]{b:billingsley}
\bibinfo{author}{Billingsley, P.}, \bibinfo{year}{1999}.
\newblock \bibinfo{title}{Convergence of probability measures (2nd ed.)}.
\newblock \bibinfo{publisher}{John Wiley \& Sons}, \bibinfo{address}{New York}.
\bibitem[{Blumenthal and Getoor(1968)}]{b:blumenthal}
\bibinfo{author}{Blumenthal, R.M.}, \bibinfo{author}{Getoor, R.K.},
  \bibinfo{year}{1968}.
\newblock \bibinfo{title}{{M}arkov processes and potential theory. {P}ure and
  applied mathematics, 29}.
\newblock \bibinfo{publisher}{Academic Press, Inc.}
\bibitem[{Bogachev(2007)}]{b:bogachev}
\bibinfo{author}{Bogachev, V.I.}, \bibinfo{year}{2007}.
\newblock \bibinfo{title}{Measure theory}. volume~\bibinfo{volume}{II}.
\newblock \bibinfo{publisher}{Springer-Verlag}, \bibinfo{address}{Berlin}.
\bibitem[{Bo\l{}t et~al.(2012)Bo\l{}t, Majewski and Szarek}]{b:bolt_majewski}
\bibinfo{author}{Bo\l{}t, W.}, \bibinfo{author}{Majewski, A.A.},
  \bibinfo{author}{Szarek, T.}, \bibinfo{year}{2012}.
\newblock \bibinfo{title}{An invariance principle for the law of the iterated
  logarithm for some {M}arkov chains}.
\newblock \bibinfo{journal}{Stud. Math.} \bibinfo{volume}{212},
  \bibinfo{pages}{41--53}.
\newblock \DOIprefix\doi{10.4064/sm212-1-3}.
\bibitem[{Cloez and Hairer(2015)}]{b:cloez_hairer}
\bibinfo{author}{Cloez, B.}, \bibinfo{author}{Hairer, M.},
  \bibinfo{year}{2015}.
\newblock \bibinfo{title}{Exponential ergodicity for {M}arkov processes with
  random switching}.
\newblock \bibinfo{journal}{Bernoulli} \bibinfo{volume}{21},
  \bibinfo{pages}{505--536}.
\newblock \DOIprefix\doi{10.3150/13-BEJ577}.
\bibitem[{Costa and Dufour(2006)}]{costa}
\bibinfo{author}{Costa, O.L.V.}, \bibinfo{author}{Dufour, F.},
  \bibinfo{year}{2006}.
\newblock \bibinfo{title}{Ergodic properties and ergodic decompositions of
  continuous-time {M}arkov processes}.
\newblock \bibinfo{journal}{J. Appl. Probab.} \bibinfo{volume}{43},
  \bibinfo{pages}{767--781}.
\newblock \DOIprefix\doi{10.1239/jap/1158784945}.
\bibitem[{Czapla et~al.(2020)Czapla, Horbacz and
  Wojew\'odka-\'Sci\k{a}\.zko}]{pierwsza_praca}
\bibinfo{author}{Czapla, D.}, \bibinfo{author}{Horbacz, K.},
  \bibinfo{author}{Wojew\'odka-\'Sci\k{a}\.zko, H.}, \bibinfo{year}{2020}.
\newblock \bibinfo{title}{Ergodic properties of some piecewise deterministic
  {M}arkov process with application to gene expression modelling}.
\newblock \bibinfo{journal}{Stoch. Proc. Appl.} \bibinfo{volume}{130},
  \bibinfo{pages}{2851--2885}.
\newblock \DOIprefix\doi{10.1016/j.spa.2019.08.006}.
\bibitem[{Czapla et~al.(2021)Czapla, Horbacz and
  Wojew\'odka-\'Sci\k{a}\.zko}]{b:czapla-strassen}
\bibinfo{author}{Czapla, D.}, \bibinfo{author}{Horbacz, K.},
  \bibinfo{author}{Wojew\'odka-\'Sci\k{a}\.zko, H.}, \bibinfo{year}{2021}.
\newblock \bibinfo{title}{The {S}trassen invariance principle for certain
  non-stationary {M}arkov--{F}eller chains}.
\newblock \bibinfo{journal}{Asymptot. Anal.} \bibinfo{volume}{121},
  \bibinfo{pages}{1--34}.
\newblock \DOIprefix\doi{10.3233/ASY-191592}.
\bibitem[{Czapla et~al.(2023)Czapla, Horbacz and
  Wojew\'odka-\'Sci\k{a}\.zko}]{b:czapla-CLT-cont}
\bibinfo{author}{Czapla, D.}, \bibinfo{author}{Horbacz, K.},
  \bibinfo{author}{Wojew\'odka-\'Sci\k{a}\.zko, H.}, \bibinfo{year}{2023}.
\newblock \bibinfo{title}{The central limit theorem for {M}arkov processes that
  are exponentially ergodic in the bounded-{L}ipschitz norm}.
\newblock \bibinfo{journal}{Qual. Theory Dyn. Syst.} \bibinfo{volume}{23},
  \bibinfo{pages}{7}.
\newblock \DOIprefix\doi{doi.org/10.1007/s12346-023-00862-4}.
\bibitem[{Da~Prato and Zabczyk(1996)}]{b:prato}
\bibinfo{author}{Da~Prato, G.}, \bibinfo{author}{Zabczyk, J.},
  \bibinfo{year}{1996}.
\newblock \bibinfo{title}{Ergodicity for infinite dimensional systems, 2nd ed.}
\newblock \bibinfo{publisher}{Cambridge Univ. Press},
  \bibinfo{address}{Cambridge}.
\bibitem[{Douc et~al.(2014)Douc, Moulines and Stoffer}]{b:douc}
\bibinfo{author}{Douc, D.}, \bibinfo{author}{Moulines, E.},
  \bibinfo{author}{Stoffer, D.}, \bibinfo{year}{2014}.
\newblock \bibinfo{title}{Nonlinear time series: {T}heory, methods and
  applications with {R} examples}.
\newblock \bibinfo{publisher}{Chapman and Hall/CRC}, \bibinfo{address}{New
  York}.
\bibitem[{Dudley(1966)}]{b:dudley}
\bibinfo{author}{Dudley, R.}, \bibinfo{year}{1966}.
\newblock \bibinfo{title}{Convergence of {B}aire measures}.
\newblock \bibinfo{journal}{Stud. Math.} \bibinfo{volume}{27},
  \bibinfo{pages}{251--268}.
\bibitem[{Eckmann and Hairer(2001)}]{b:hairer_sf2}
\bibinfo{author}{Eckmann, J.P.}, \bibinfo{author}{Hairer, M.},
  \bibinfo{year}{2001}.
\newblock \bibinfo{title}{Uniqueness of the invariant measure for a stochastic
  {PDE} driven by degenerate noise}.
\newblock \bibinfo{journal}{Commun. Math. Phys.} \bibinfo{volume}{219},
  \bibinfo{pages}{523--565}.
\newblock \DOIprefix\doi{10.1007/s002200100424}.
\bibitem[{Gordin and Lif\v{s}ic(1981)}]{gordin_lifsic81}
\bibinfo{author}{Gordin, M.I.}, \bibinfo{author}{Lif\v{s}ic, B.A.},
  \bibinfo{year}{1981}.
\newblock \bibinfo{title}{A remark about a {M}arkov process with normal
  transition operator}.
\newblock \bibinfo{journal}{Third Vilnius Conf. Proba. Stat., Akad. Nauk
  Litovsk, Vilnius} \bibinfo{volume}{1}, \bibinfo{pages}{147--148}.
\bibitem[{Gulgowski et~al.(2019)Gulgowski, Hille, Szarek and
  Ziemla\'nska}]{gulgowski}
\bibinfo{author}{Gulgowski, J.}, \bibinfo{author}{Hille, S.C.},
  \bibinfo{author}{Szarek, T.}, \bibinfo{author}{Ziemla\'nska, M.},
  \bibinfo{year}{2019}.
\newblock \bibinfo{title}{{C}entral limit theorem for some non-stationary
  {M}arkov chains}.
\newblock \bibinfo{journal}{Stud. Math.} \bibinfo{volume}{246},
  \bibinfo{pages}{109--131}.
\newblock \DOIprefix\doi{10.4064/sm170325-8-9}.
\bibitem[{Hairer(2002)}]{b:hairer_sf}
\bibinfo{author}{Hairer, M.}, \bibinfo{year}{2002}.
\newblock \bibinfo{title}{Exponential mixing for a stochastic partial
  differential equation driven by degenerate noise}.
\newblock \bibinfo{journal}{Nonlinearity} \bibinfo{volume}{15},
  \bibinfo{pages}{271--279}.
\newblock \DOIprefix\doi{10.1088/0951-7715/15/2/304}.
\bibitem[{Hairer and Mattingly(2008)}]{hairer_mattingly}
\bibinfo{author}{Hairer, M.}, \bibinfo{author}{Mattingly, J.C.},
  \bibinfo{year}{2008}.
\newblock \bibinfo{title}{{Spectral gaps in {W}asserstein distances and the
  2{D} stochastic {N}avier--{S}tokes equations}}.
\newblock \bibinfo{journal}{Ann. Probab.} \bibinfo{volume}{36},
  \bibinfo{pages}{2050--2091}.
\newblock \DOIprefix\doi{10.1214/08-AOP392}.
\bibitem[{Hall and Heyde(1980)}]{hall_heyde}
\bibinfo{author}{Hall, P.}, \bibinfo{author}{Heyde, C.C.},
  \bibinfo{year}{1980}.
\newblock \bibinfo{title}{{Martingale limit theory and its application}}.
\newblock \bibinfo{publisher}{Elsevier}, \bibinfo{address}{New York}.
\bibitem[{Heyde and Scott(1973)}]{b:heyde_scott}
\bibinfo{author}{Heyde, C.C.}, \bibinfo{author}{Scott, D.J.},
  \bibinfo{year}{1973}.
\newblock \bibinfo{title}{Invariance principles for the law of the iterated
  logarithm for martingales and processes with stationary increments}.
\newblock \bibinfo{journal}{Ann. Probab.} \bibinfo{volume}{1},
  \bibinfo{pages}{428--436}.
\newblock \DOIprefix\doi{10.1214/aop/1176996937}.
\bibitem[{Hille et~al.(2021)Hille, Szarek, Worm and
  Ziemlańska}]{b:szarek_hille}
\bibinfo{author}{Hille, S.C.}, \bibinfo{author}{Szarek, T.},
  \bibinfo{author}{Worm, D.T.H.}, \bibinfo{author}{Ziemlańska, M.},
  \bibinfo{year}{2021}.
\newblock \bibinfo{title}{Equivalence of equicontinuity concepts for {M}arkov
  operators derived from a {S}chur-like property for spaces of measures}.
\newblock \bibinfo{journal}{Stat. Probab. Lett.} \bibinfo{volume}{169},
  \bibinfo{pages}{108964}.
\newblock \DOIprefix\doi{10.1016/j.spl.2020.108964}.
\bibitem[{Holzmann(2005)}]{holzmann05}
\bibinfo{author}{Holzmann, H.}, \bibinfo{year}{2005}.
\newblock \bibinfo{title}{The central limit theorem for stationary {M}arkov
  processes with normal generator -- with applications to hypergroups}.
\newblock \bibinfo{journal}{Stochastics} \bibinfo{volume}{77},
  \bibinfo{pages}{371--380}.
\newblock \DOIprefix\doi{10.1080/17442500500190060}.
\bibitem[{Hutchinson(1981)}]{b:hutchinson}
\bibinfo{author}{Hutchinson, J.}, \bibinfo{year}{1981}.
\newblock \bibinfo{title}{Fractals and self-similarity}.
\newblock \bibinfo{journal}{Indiana Univ. Math. J.} \bibinfo{volume}{30},
  \bibinfo{pages}{713--747}.
\newblock \DOIprefix\doi{10.1512/iumj.1981.30.30055}.
\bibitem[{Kapica(2019)}]{b:kapica}
\bibinfo{author}{Kapica, R.}, \bibinfo{year}{2019}.
\newblock \bibinfo{title}{The geometric rate of convergence of random iteration
  in the {H}utchinson distance}.
\newblock \bibinfo{journal}{Aequationes Math.} \bibinfo{volume}{93},
  \bibinfo{pages}{149--160}.
\newblock \DOIprefix\doi{10.1007/s00010-018-0624-x}.
\bibitem[{Komorowski and Szarek(2014)}]{b:komorowski_LIL}
\bibinfo{author}{Komorowski, T.}, \bibinfo{author}{Szarek, T.},
  \bibinfo{year}{2014}.
\newblock \bibinfo{title}{The law of the iterated logarithm for passive tracer
  in a two-dimensional flow}.
\newblock \bibinfo{journal}{J. London Math. Soc.} \bibinfo{volume}{89},
  \bibinfo{pages}{482–498}.
\newblock \DOIprefix\doi{10.1112/jlms/jdt075}.
\bibitem[{Komorowski and Walczuk(2012)}]{b:komorowski}
\bibinfo{author}{Komorowski, T.}, \bibinfo{author}{Walczuk, A.},
  \bibinfo{year}{2012}.
\newblock \bibinfo{title}{Central limit theorem for {M}arkov processes with
  spectral gap in the {W}asserstein metric}.
\newblock \bibinfo{journal}{Stoch. Proc. Appl.} \bibinfo{volume}{122},
  \bibinfo{pages}{2155--2184}.
\newblock \DOIprefix\doi{10.1016/j.spa.2012.03.006}.
\bibitem[{Meyn and Tweedie(1993a)}]{meyn_tweedie}
\bibinfo{author}{Meyn, S.P.}, \bibinfo{author}{Tweedie, R.L.},
  \bibinfo{year}{1993}a.
\newblock \bibinfo{title}{Criteria for stability of {M}arkovian processes
  {III}: {F}oster--{L}yapunov criteria for continuous time processes, with
  examples}.
\newblock \bibinfo{journal}{Adv. in Appl. Probab.} \bibinfo{volume}{25},
  \bibinfo{pages}{518--548}.
\newblock \DOIprefix\doi{10.2307/1427522}.
\bibitem[{Meyn and Tweedie(1993b)}]{mt93}
\bibinfo{author}{Meyn, S.P.}, \bibinfo{author}{Tweedie, R.L.},
  \bibinfo{year}{1993}b.
\newblock \bibinfo{title}{Markov chains and stochastic stability}.
\newblock \bibinfo{publisher}{Springer-Verlag}, \bibinfo{address}{Berlin,
  {H}eidelberg, {N}ew {Y}ork}.
\bibitem[{Meyn and Tweedie(1993c)}]{mt}
\bibinfo{author}{Meyn, S.P.}, \bibinfo{author}{Tweedie, R.L.},
  \bibinfo{year}{1993}c.
\newblock \bibinfo{title}{Stability of {M}arkovian processes {II}:
  continuous-time processes and sampled chains}.
\newblock \bibinfo{journal}{Adv. Appl. Probab.} \bibinfo{volume}{25},
  \bibinfo{pages}{487--517}.
\newblock \DOIprefix\doi{10.2307/1427521}.
\bibitem[{Olla et~al.(2012)Olla, Landim and Komorowski}]{klo}
\bibinfo{author}{Olla, S.}, \bibinfo{author}{Landim, C.},
  \bibinfo{author}{Komorowski, T.}, \bibinfo{year}{2012}.
\newblock \bibinfo{title}{Fluctuations in {M}arkov processes. {T}ime symmetry
  and martingale approximation}.
\newblock \bibinfo{publisher}{Springer}, \bibinfo{address}{Berlin,
  {H}eidelberg}.
\bibitem[{Priola et~al.(2012)Priola, Shirikyan, Xu and Zabczyk}]{b:priola1}
\bibinfo{author}{Priola, E.}, \bibinfo{author}{Shirikyan, A.},
  \bibinfo{author}{Xu, L.}, \bibinfo{author}{Zabczyk, J.},
  \bibinfo{year}{2012}.
\newblock \bibinfo{title}{Exponential ergodicity and regularity for equations
  with {L}\'evy noise}.
\newblock \bibinfo{journal}{Stochastic Processes and their Applications}
  \bibinfo{volume}{122}, \bibinfo{pages}{106--133}.
\newblock \DOIprefix\doi{10.1016/j.spa.2011.10.003}.
\bibitem[{Priola and Zabczyk(2011)}]{b:priola2}
\bibinfo{author}{Priola, E.}, \bibinfo{author}{Zabczyk, J.},
  \bibinfo{year}{2011}.
\newblock \bibinfo{title}{Structural properties of semilinear {SPDE}s driven by
  cylindrical stable processes}.
\newblock \bibinfo{journal}{Probab. Theory Relat. Fields}
  \bibinfo{volume}{149}, \bibinfo{pages}{97--137}.
\newblock \DOIprefix\doi{10.1007/s00440-009-0243-5}.
\bibitem[{Szarek and Worm(2011)}]{b:szarek_worm}
\bibinfo{author}{Szarek, T.}, \bibinfo{author}{Worm, D.T.H.},
  \bibinfo{year}{2011}.
\newblock \bibinfo{title}{Ergodic measures of {M}arkov semigroups with the
  e-property}.
\newblock \bibinfo{journal}{Ergodic Theory Dyn. Syst.} \bibinfo{volume}{32},
  \bibinfo{pages}{1117–1135}.
\newblock \DOIprefix\doi{10.1017/s0143385711000022}.
\bibitem[{Ziemla\'nska(2021)}]{b:ziemlanska}
\bibinfo{author}{Ziemla\'nska, M.}, \bibinfo{year}{2021}.
\newblock \bibinfo{title}{Approach to {M}arkov operators on spaces of measures
  by means of equicontinuity}.
\newblock \bibinfo{publisher}{PhD. thesis, Mathematical Institute, Faculty of
  Science, Leiden University}, \bibinfo{address}{The Netherlands}.

\end{thebibliography}
\end{document}